\documentclass[12pt]{amsart}
\usepackage{amssymb,amsfonts,amsmath,amsthm,cite,verbatim}
\usepackage{xcolor}
\usepackage{tikz}
\usepackage{graphicx}
\usepackage[left=2.5cm, right=2.5cm, top=3.5cm, bottom=3.5cm]{geometry}
\usepackage{graphicx}
\usepackage{hyperref}
\usepackage{mathdots}
\usepackage{diagbox}
\usepackage[normalem]{ulem}
\newtheorem{theorem}{Theorem}[section]
\newtheorem{theo}[theorem]{Theorem}
\newtheorem{prop}[theorem]{Proposition}
\newtheorem{lem}[theorem]{Lemma}
\newtheorem{rem}[theorem]{Remark}
\newtheorem{defn}[theorem]{Definition}
\newtheorem{cor}[theorem]{Corollary}

\newtheorem{exam}[theorem]{Example}

\makeatletter \@addtoreset{equation}{section}

\DeclareMathOperator*{\CT}{CT}

\def\x{\mathbf{x}}
\def\y{\mathbf{y}}

\title{Magic labelling enumeration on pseudo-line graphs and pseudo-cycle graphs}

\author{Guoce Xin}

\address{School of Mathematical Sciences, Capital Normal University,
 Beijing 100048, P.R. China}

\email{guoce\_xin@163.com}

\author{Yueming Zhong*}

\thanks{*Corresponding author}

\address{School of Mathematics and Statistics, Hainan University, Haikou 570228,
P.R. China}

\email{zhongyueming107@gmail.com}

\author{Yangbiao Zhou}

\address{School of Mathematics and Statistics, Hainan University, Haikou 570228,
P.R. China}

\email{yangbiao\_zhou@163.com}

\date{\today}

\begin{document}

\begin{abstract}
Stanley's theorem establishes that for any finite graph $G$, the number $h_G(s)$ of magic labelings with magic sum $s$ can be expressed as a sum of two polynomials in $s$. However, determining the precise form of $h_G(s)$ is generally challenging. This paper aims to compute $h_G(s)$ and its generating function for pseudo-line graphs and pseudo-cycle graphs, thereby extending the earlier work of B\'{o}na et al.\cite{Bona-1,Bona}.
\end{abstract}

\maketitle

\vspace{-5mm}

\noindent
\begin{small}
 \emph{Mathematic subject classification}: Primary 05A19; Secondary 11D04; 05C78.
\end{small}

\noindent
\begin{small}
\emph{Keywords}:  magic labelling; generating function; transfer matrix method; linear Diophantine equations.
\end{small}

\section{Introduction}\label{sec-intro}
Throughout this paper, we adopt standard notation: $\mathbb{R}$ (real numbers), $\mathbb{Q}$ (rational numbers), $\mathbb{Z}$ (integers), $\mathbb{N}$ (nonnegative integers), and $\mathbb{P}$ (positive integers).

Let $G = (V, E)$ be a finite undirected graph with vertex set $V = \{v_1, \ldots, v_m\}$ and edge set $E = \{a_1, \ldots, a_n\}$. A \textit{magic labelling} assigns nonnegative integer labels to the edges such that the sum of labels incident to each vertex equals a constant $s$ (the \textit{magic sum} or \textit{index}). Formally, for a labelling function $\mu: E \to \mathbb{N}$:
\begin{equation}\label{e-magic-sum}
\text{wt}(v_i) := \sum_{\substack{(v_i, v_j) \in E \\ j = 1}}^{m} \mu(v_i, v_j) = s, \quad \forall i \in \{1, \ldots, m\}. \tag{1.1}
\end{equation}
Magic labellings were first studied for simple graphs in \cite{MacDougall}. Vertex magic total labellings correspond to our framework with self-looped vertices. For related work, see \cite{Matthias1,Matthias2,Arnold,Kotzig,Baker2,Prasanna,xin-zhong,Xin-cube,MacDougall,Wallis}. (Note: Terminology ``magic'' varies across the literature.)

Let $h_G(s)$ denote the number of magic labellings of $G$ with magic sum $s$. Stanley's seminal result states:
\begin{theo}[Stanley, \cite{Stanley-magiclabelings}]\label{theo-Stanley}
For any finite graph $G$, there exist polynomials $\varphi_G(s)$ and $\psi_G(s)$ such that:
\[
h_G(s) = \varphi_G(s) + (-1)^s \psi_G(s).
\]
If the loop-free subgraph of $G$ is bipartite, then $\psi_G(s) = 0$, making $h_G(s)$ polynomial.
\end{theo}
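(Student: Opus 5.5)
We prove Stanley's theorem by viewing magic labellings as lattice points in dilates of a rational polytope and then using Ehrhart theory. The key structural fact is that the vertices of the polytope are half-integral. Let $A$ be the $m\times n$ vertex--edge incidence matrix of $G$, where a loop at $v_i$ contributes $1$ to row $i$ (each loop adds its label once to $\mathrm{wt}(v_i)$, as in \eqref{e-magic-sum}). Then magic labellings with index $s$ are exactly the points of $\mathbb{N}^n$ satisfying $A\mu=s\mathbf{1}$. Set
\[
\mathcal{P}=\{x\in\mathbb{R}^n_{\ge 0}: Ax=\mathbf{1}\},
\]
a bounded polytope, since every coordinate satisfies $x_j\le 1$. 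Then $h_G(s)=\#(s\mathcal{P}\cap\mathbb{Z}^n)$. (Edges incident to no vertex cannot occur, so boundedness is fine. If $\mathcal{P}$ is empty, then $h_G(s)=0$ for $s>0$ and $h_G(0)=1$; this is still of the required form with $\varphi_G=\psi_G=\tfrac12$.)

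\textbf{Step 1: vertices of $\mathcal{P}$ are half-integral.} Let $x$ be a vertex and let $F=\{a_j: x_j>0\}$ be its support. Then the columns of $A$ indexed by $F$ are linearly independent, so $x$ is the unique solution of $A_F x_F=\mathbf{1}$. Linear independence forces each connected component of the subgraph $(V,F)$ to carry at most one cycle. Here a loop counts as a cycle of length one, and a cycle must be odd if it is a non-loop cycle, because an even cycle gives a dependency of alternating signs. So each component is either a tree, or a unicyclic graph whose cycle is odd or a loop.

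We then solve $A_Fx_F=\mathbf{1}$ component by component.
\begin{itemize}
\item On a tree, peeling leaves determines $x$ uniquely and integrally.
\item On a component with a loop, peeling pendant edges again gives integral values.
\item On a component with an odd cycle, peeling reduces the problem to the cycle. Around an odd cycle, the alternating sum gives $2x_e\in\mathbb{Z}$ for each cycle edge.
\end{itemize}
Hence $2x\in\mathbb{Z}^n$. If moreover the loop-free part of $G$ is bipartite, no odd non-loop cycle can occur, so $x$ is integral.

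\textbf{Step 2: Ehrhart theory.} By Ehrhart's theorem for rational polytopes, $s\mapsto\#(s\mathcal{P}\cap\mathbb{Z}^n)$ is a quasi-polynomial whose period divides the denominator of $\mathcal{P}$. By Step 1 that denominator divides $2$ in general, and equals $1$ in the bipartite case. A quasi-polynomial of period $2$ can be written as $p_0(s)$ for even $s$ and $p_1(s)$ for odd $s$. Putting $\varphi_G=(p_0+p_1)/2$ and $\psi_G=(p_0-p_1)/2$ gives the stated form, and in the bipartite case $\psi_G=0$.

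If one prefers to avoid Ehrhart's theorem, the same conclusion can be reached by writing $\sum_s h_G(s)t^s$ as a rational function, for example by MacMahon partition analysis or the constant term $\CT_{\lambda}$ of $\prod_j (1-t\,\lambda^{A_j})^{-1}$. One then argues, using Step 1 via Hilbert basis/cone arguments, that the denominator is a power of $(1-t)(1+t)$.

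\textbf{Main obstacle.} The main obstacle is Step 1, specifically a careful treatment of loops and of components containing both pendant trees and an odd cycle. The crux is the claim that linearly independent column sets of $A$ correspond exactly to forests in which each component has at most one cycle, and that cycle is odd or a loop. I would prove this by a rank count: a connected component with $k$ vertices has rank $k$ if it contains an odd cycle or a loop, and rank $k-1$ otherwise.
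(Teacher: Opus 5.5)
The paper does not prove this theorem; it quotes it from Stanley~\cite{Stanley-magiclabelings} and uses it as a black box, so there is no in-paper argument to compare with. Your main argument is correct and is the standard one.

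Your support analysis of a vertex of $\mathcal{P}=\{x\ge 0: Ax=\mathbf 1\}$ works as stated. Each component of the support is a tree, or is unicyclic with an odd cycle or a loop. Leaf-peeling, plus the alternating sum around the odd cycle, then gives $2x\in\mathbb{Z}^n$, with integrality when the loop-free part is bipartite. Your convention that a loop contributes once is essential here: if loops counted twice, a single vertex with one loop would give $h(s)=\tfrac12+\tfrac12(-1)^s$, violating the bipartite clause.

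Ehrhart's theorem for rational polytopes then gives a quasi-polynomial of period dividing $2$. You should add that it agrees with $h_G(s)$ for all $s\ge 0$, including $s=0$. This holds because the Ehrhart series of a nonempty rational polytope is a proper rational function.

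Stanley's own proof is the same in substance, phrased through the monoid of solutions of $A\mu=s\mathbf 1$. Its key lemma is that the completely fundamental magic labellings (the primitive vectors on the extreme rays) have index $1$ or $2$. Hence $\sum_s h_G(s)t^s$ has a denominator made of factors $1-t$ and $1-t^2$; that lemma is your Step 1 in different language. Your Step 1 is also the general form of what the paper checks by hand for $\mathcal{P}(C_{n,1})$ in Theorem~\ref{thm:vertices}. There, the only non-integral vertex is $(0,\dots,0,\tfrac12,\dots,\tfrac12)$, occurring for odd $n$.

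There is one concrete error, in the degenerate case $\mathcal{P}=\emptyset$. Examples are a graph $G$ with an isolated vertex, or a path on three vertices; then $h_G(0)=1$ and $h_G(s)=0$ for $s\ge 1$. Your choice $\varphi_G=\psi_G=\tfrac12$ gives $\tfrac12+\tfrac12(-1)^s=1$ at every even $s$, so it already fails at $s=2$.

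In fact no pair of polynomials works if $s=0$ is included. The even values force $\varphi_G+\psi_G$ to vanish at $2,4,6,\dots$, hence identically, which contradicts $h_G(0)=1$. So in this case the statement must be read for $s\ge 1$, where $\varphi_G=\psi_G=0$ works, or the case must be excluded by hypothesis. You should say this rather than assert a representation that does not exist.
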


Theorem \ref{theo-Stanley} guarantees that $h_G(s)$ can be expressed as a sum of two polynomials. However, determining the explicit forms of $\varphi_G(s)$ and $\psi_G(s)$, as well as their generating functions, is generally difficult for arbitrary graphs. In this paper, we focus on two specific families of graphs: \textit{pseudo-line graphs} and \textit{pseudo-cycle graphs}. Our aim is to compute their magic sum counting function $h_G(s)$ and to determine the corresponding generating functions.

To present our results, we need the following definitions. Let $\mathbf{k} = (k_1,\ldots,k_n) \in \mathbb{N}^n$ be a vector of nonnegative integers.
\subsection{Definitions}
\begin{itemize}
    \item $L_{n,m}$: A pseudo-line graph on $n$ vertices with $m$ self-loops per vertex (Fig. \ref{fig:Lnm}).
    \item $C_{n,\mathbf{k}}$: A circular graph on $n$ vertices where the $i$-th vertex $i$ has $k_i$ self-loops.
    \item $C_{n,m}$: A pseudo-cycle graph on $n$ vertices with $m$ self-loops per vertex (Fig. \ref{fig:Cnm}).
\end{itemize}

\begin{figure}[!ht]
\centering{
\includegraphics[height=1.1 in]{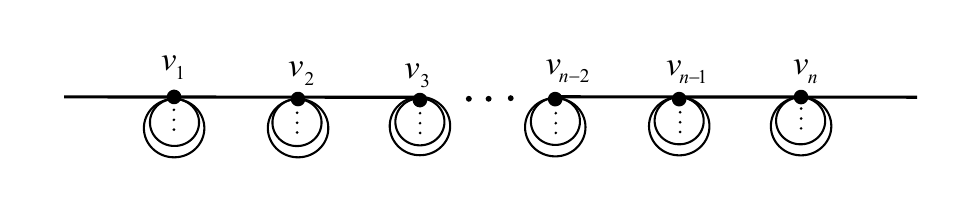}}\vspace{-0.5cm}
\caption{The pseudo-line graph $L_{n,m}$.}
\label{fig:Lnm}
\end{figure}

\begin{figure}[!ht]
\centering{
\includegraphics[height=2.5 in]{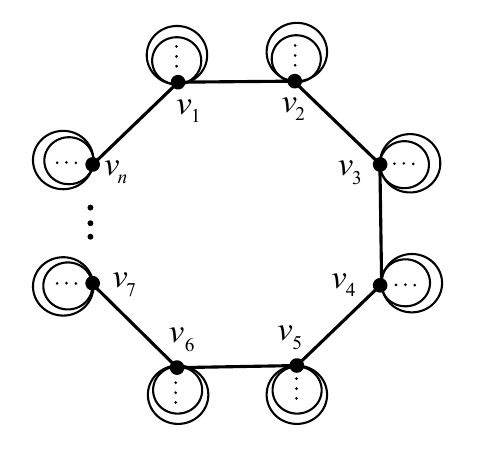}}\vspace{-0.5cm}
\caption{The pseudo-cycle graph $C_{n,m}$.}
\label{fig:Cnm}
\end{figure}

Specific cases of pseudo-cycle graphs $C_{1,m}$ and $C_{2,m}$ are illustrated below:
\begin{figure}[!ht]
\centering{
\includegraphics[height=1.0 in]{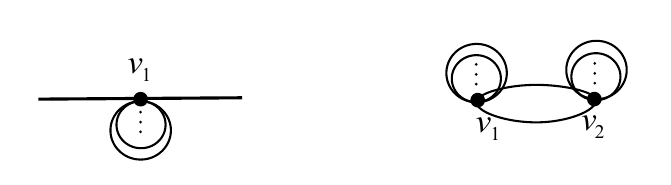}}\vspace{-0.5cm}
\caption{The pseudo-cycle graphs $C_{1,m}$ and $C_{2,m}$.}
\label{fig:Cnm12}
\end{figure}

\begin{rem}
In the labelling of the pseudo-cycle graph $C_{1,m}$, the edges incident to the vertex $v_1$ are subjected to specific constraints: the two non-loop edges are assigned the same nonnegative integer. For the case $C_{1,2}$, the counting function $h_{C_{1,2}}(s)$ enumerates the number of non-negative integer solutions $(x_1, x_2)$ to the linear inequality derived from the magic sum condition:
\begin{equation}\label{eq-c12-sum}
x_1 + x_2 + x_1 \le s \quad \iff \quad 2x_1 + x_2 \le s.
\end{equation}
We can verify this for small values of $s$:
\begin{itemize}
\item If $s=2$, the valid solutions $(x_1, x_2)$ are $(0,0), (0,1), (0,2)$, and $(1,0)$, yielding $h_{C_{1,2}}(2) = 4$.
\item If $s=3$, the set of solutions is augmented by $(0,3)$ and $(1,1)$, leading to $h_{C_{1,2}}(3) = 6$.
\end{itemize}
\end{rem}

We define $FL_{m}(s, y)$ and $FC_{m}(s, y)$ as the generating functions that enumerate the magic labelings with sum $s$ for the graphs $L_{n,m}$ and $C_{n,m}$, respectively:
\[
FL_{m}(s, y) = \sum_{n=0}^{\infty} h_{L_{n,m}}(s) y^n, \quad FC_{m}(s, y) = \sum_{n=0}^{\infty} h_{C_{n,m}}(s) y^n.
\]

These initial values are set to $h_{L_{0,m}}(s) = h_{C_{0,m}}(s) = s+1$. This convention ensures the transfer-matrix method, as established in Proposition \ref{prop-L}, remains valid for $n=0$. For instance, in the case $m=2$, we have the representations $h_{L_{n,2}}(s) = \mathbf{u}_{s+1}^\top B_{s+1}^n \mathbf{u}_{s+1}$ and $h{C_{n,2}}(s) = \operatorname{tr}(B_{s+1}^n)$, where $B_{s+1}^0$ is interpreted as the identity matrix $I_{s+1}$.

As a consistency check, we note that for $m=0$ (i.e., graphs with no loops), the following simple closed forms can be verified directly:
\[
h_{L_{n,0}}(s)=s+1,
\qquad
h_{C_{n,0}}(s) = \begin{cases}
    s+1, & \text{if } n \text{ is even},\\[4pt]
    \dfrac{1 + (-1)^s}{2}, & \text{if } n \text{ is odd}.
\end{cases}
\]
Consequently, we obtain the generating functions
\begin{align*}
FL_{0}(s, y) &= \frac{s + 1}{1 - y}=s + 1 + (s + 1)y + (s + 1)y^2 + (s + 1)y^3 + O(y^4),\\
FC_{0}(s, y) &= \frac{1 + (-1)^s}{2} \cdot \frac{y}{1-y^2} + \frac{s + 1}{1 - y^2} \\
&= s + 1 + \left(\frac{1 + (-1)^s}{2}\right)y + (s + 1)y^2 + \left(\frac{1 + (-1)^s}{2}\right)y^3 + O(y^4).
\end{align*}

\subsection{Main Results}
We begin our results by extending the theory of the generating functions $FL_{m}(s, y)$ and $FC_{m}(s, y)$. Our work builds upon prior results for $m=1$, which include closed forms for $\frac{FL_{1}(s, y) - 1}{y}$ and $FC_{1}(s, y)$ \cite{Bona-1}, recursive formulas for $FL_{1}(s, y)$ \cite{Bona}, and independent verification \cite{xin-zhong}. Our first main contribution is the generalization of these results to the case $m=2$, as shown in Theorems \ref{theo-1} and \ref{theo-2}.

We will utilize two classes of polynomials, $P_n(y)$ and $Q_n(y)$, defined recursively as follows.
\begin{align*}
P_n(y) &= -y[P_{n-1}(-y) + P_{n-3}(-y)] + 2P_{n-2}(y) - P_{n-4}(y), \\
Q_n(y) &= -y[Q_{n-1}(-y) + Q_{n-3}(-y)] + 2Q_{n-2}(y) - Q_{n-4}(y),
\end{align*}
satisfying the initial conditions
\begin{align*}
P_0(y) &= 1, & Q_0(y) &= 1 - y, \\
P_1(y) &= 2, & Q_1(y) &= 1 - 2y - y^2, \\
P_2(y) &= 3 - 2y, & Q_2(y) &= 1 - 4y - 2y^2 + y^3, \\
P_3(y) &= 4 - 4y - 2y^2, & Q_3(y) &= 1 - 6y - 7y^2 + 2y^3 + y^4.
\end{align*}

\begin{theorem}\label{theo-1}
For $s \in \mathbb{P}$,
\[
FL_{2}(s, y) = \frac{P_s(y)}{Q_s(y)}.
\]
\end{theorem}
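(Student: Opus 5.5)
The plan is to reduce Theorem~\ref{theo-1} to a determinant identity for a structured transfer matrix, and then prove that identity by a banded-determinant (continuant) recursion in $s$.

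\textbf{Transfer-matrix setup.} By Proposition~\ref{prop-L} in the case $m=2$ we have $h_{L_{n,2}}(s)=\mathbf{u}_{s+1}^\top B_{s+1}^n\mathbf{u}_{s+1}$. At a vertex whose two incident path-edges carry labels $i,j$, the two loops carry any pair $(a,b)$ with $a+b=s-i-j$, and there are $(s+1-i-j)_+$ such pairs. Hence $B_{s+1}$ is the Hankel matrix $\bigl((s+1-i-j)_+\bigr)_{0\le i,j\le s}$. Summing the geometric series gives
\[
FL_2(s,y)=\mathbf{u}_{s+1}^\top (I-yB_{s+1})^{-1}\mathbf{u}_{s+1}
=\frac{\mathbf{u}_{s+1}^\top \operatorname{adj}(I-yB_{s+1})\,\mathbf{u}_{s+1}}{\det(I-yB_{s+1})}.
\]
Comparing with the small cases, I will aim to prove $Q_s(y)=\det(I-yB_{s+1})$ and $P_s(y)=\mathbf{u}^\top\operatorname{adj}(I-yB_{s+1})\mathbf{u}$ exactly, with no cancellation. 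For instance, $s=1$ gives $B_2=\begin{pmatrix}2&1\\1&0\end{pmatrix}$, so $\det(I-yB_2)=1-2y-y^2=Q_1$. For the numerator it is convenient to use $\mathbf{u}^\top\operatorname{adj}(M)\mathbf{u}=\det\begin{pmatrix}M&\mathbf{u}\\-\mathbf{u}^\top&0\end{pmatrix}$. Then both $P_s$ and $Q_s$ are determinants of the same shape, and a single recursion argument should handle both; the two sequences differ only in their initial values.

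\textbf{Sparsifying by second differences.} The sequence $k\mapsto(c-k)_+$ is piecewise linear, so its second difference is a delta function supported at $k=c$. Let $D$ be the bidiagonal first-difference matrix, truncated suitably at the boundary. Then $D B_{s+1} D^\top$ is essentially an anti-diagonal permutation-like matrix $E$, plus a few boundary corrections coming from the truncation at $i,j\le s$. Consequently
\[
\det(I-yB_{s+1})=\det(DD^\top - yE)/\det(D)^2 ,
\]
where $DD^\top$ is tridiagonal (the discrete Laplacian) and $E$ is anti-diagonal. Conjugating by the reversal permutation on half the indices, or equivalently folding the matrix along the anti-diagonal, turns $DD^\top-yE$ into a banded matrix of bandwidth about two. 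Expanding along the first two rows and columns of such a banded matrix gives a linear recurrence relating the size-$(s+1)$ determinant to the sizes $s,s-1,s-2,s-3$. This is the expected order-four recursion.

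\textbf{Where the sign twist $y\mapsto -y$ comes from.} There is a second, more direct reduction that explains the sign change. Deleting row and column $0$ of $B_{s+1}$ leaves entries $(s-1-i'-j')_+$, which is exactly $B_{s-1}$ padded by a zero row and column. This accounts for the shift $s\mapsto s-2$ and the terms $2P_{s-2}-P_{s-4}$. The terms $-y\,[P_{s-1}(-y)+P_{s-3}(-y)]$ involve odd shifts, and they should arise from the anti-diagonal part. Moving past an anti-diagonal block reverses orientation, and a minor expanded across an odd number of anti-diagonal positions acquires a factor $-1$ per entry of $y$, which is the substitution $y\mapsto -y$.

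Concretely, I would do a block Schur-complement expansion of the folded banded matrix and identify each resulting minor with $Q_{s-k}(\pm y)$, resp.\ $P_{s-k}(\pm y)$, for $k=1,2,3,4$. I would then check the four initial values listed in the paper directly, by computing the $1\times1$ to $4\times4$ determinants. The case $s=0$ cannot be obtained from $B_1$ alone and has to be set by the listed initial values.

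\textbf{Main obstacle.} The hard step is getting the boundary corrections exactly right, so that the recurrence comes out precisely in the stated form, with both the parity-dependent sign twist and the coefficients $2$ and $-1$. The corrections arise where $D$ is truncated at $i=s$ and where $(s+1-i-j)_+$ stops being linear near $i+j=s+1$. My first attempt would be to compute $DB_{s+1}D^\top$ explicitly for small $s$ (say $s=2,3,4$), read off the exact banded pattern, and then prove the general pattern by induction.

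A fallback, if the identification is messy, is the following. Show only that $\det(I-yB_{s+1})$ satisfies some order-four linear recurrence of this twisted type, with polynomial coefficients, obtained from the band structure. Then determine the coefficients by matching enough small cases. The same argument applied to the bordered determinant then yields $P_s$.
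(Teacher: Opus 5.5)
Your reduction is sound, and in fact it is exact. Write your difference matrix as $\Delta$ to avoid a clash with the paper's $D_n$, so $(\Delta v)_i=v_i-v_{i+1}$ with $v_{s+1}:=0$. Then $\Delta B_{s+1}\Delta^\top=E$ with no boundary corrections, $\det\Delta=1$, and $\Delta\mathbf{u}_{s+1}=e_s$. Hence $\det(I-yB_{s+1})=\det(L-yE)$, where $L=\Delta\Delta^\top$ is tridiagonal with diagonal $(2,\dots,2,1)$. The numerator is the principal minor of $L-yE$ with index $s$ deleted, and $EL$ is precisely the paper's $A_{s+1}$. Also, $s=0$ is covered by $B_1=(1)$: $\det(1-y)=Q_0$ and the numerator is $1=P_0$.

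The gap is that the whole content of the theorem, the twisted recurrence, is not proved, and the mechanism you propose cannot produce it. In the folded order $0,s,1,s-1,\dots$ the matrix $L-yE$ is block tridiagonal in the pairs $\{i,s-i\}$:
off-diagonal blocks are $-I_2$;
interior blocks are $\bigl(\begin{smallmatrix}2&-y\\-y&2\end{smallmatrix}\bigr)$;
the outer block is $\bigl(\begin{smallmatrix}2&-y\\-y&1\end{smallmatrix}\bigr)$;
the central block depends on the parity of $s$.
Expanding along the outer block only reaches (possibly defective) structures of type $s-2$, never $s-1$ or $s-3$. Even that step already replaces the corner entry $1$ by $2$, so a second family is forced. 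Because the pairing $i\leftrightarrow s-i$ moves with $s$, the matrices for $s$ and $s-1$ are not nested. So the terms $-y\,[Q_{s-1}(-y)+Q_{s-3}(-y)]$ have no source in your argument, and ``a factor $-1$ per entry of $y$'' is not a derivation.

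The same applies to $P_s$: deleting index $s$ leaves index $0$ unpaired, so the minor is not ``the same shape'' and needs its own recursion. Your fallback of fitting coefficients to small cases is legitimate only once you know a priori that the recurrence has $s$-independent coefficients of bounded degree.

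The missing idea is the one the paper uses: work with the unfolded anti-banded matrix $yI-B_n^{-1}$, which is similar to $yI-A_n$, and expand along its first row. Deleting the first row and column shifts the anti-band by one place, so that minor is $yI+D_{n-1}=-\bigl((-y)I-D_{n-1}\bigr)$. This gives $f_{B_n^{-1}}(y)=(-1)^{n-1}y\,f_{D_{n-1}}(-y)-f_{B_{n-2}^{-1}}(y)$, which is exactly where the odd shift and $y\mapsto -y$ come from. Combine it with three further facts:
$f_{A_n}-f_{A_{n-2}}=f_{D_n}+f_{D_{n-2}}$ (obtained by splitting the corner entry);
$f_{A_n}=f_{B_n^{-1}}$;
$f_{B_n^{-1}}=(-1)^{n(n+1)/2}\det(I-yB_n)$.
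Eliminating the auxiliary family $f_{D_n}$ then gives the $Q$-recurrence. The numerator needs a further link, such as $g_n+g_{n-2}=\pm 2f_{D_{n-2}}$.

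Alternatively, you can keep the folding but aim at a different target. Diagonalizing $\bigl(\begin{smallmatrix}2&-y\\-y&2\end{smallmatrix}\bigr)$ splits the folded matrix into two tridiagonal chains with diagonals $2\mp y$, coupled only at the outer end. So, for each parity of $s$, both $\det(L-yE)$ and its minor are combinations of $\mu_j^{\lfloor s/2\rfloor}$ over the four roots $\mu_j$ of $(\mu-1)^4+y^2\mu(\mu+1)^2$. Applying the defining recurrence twice (once at $-y$) shows that $P_s$ and $Q_s$ are annihilated by $(1-S^2)^4+y^2S^2(1+S^2)^2$, where $S\colon Q_s\mapsto Q_{s-1}$. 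Finitely many initial values would then finish the proof. That would be a genuinely different argument, but it is not what your proposal contains.
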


\begin{theorem}\label{theo-2}
For $s \in \mathbb{P}$,
\[
FC_{2}(s, y) = - \frac{y\frac{d}{dy}Q_s(y)}{Q_s(y)}+s+1 .
\]
\end{theorem}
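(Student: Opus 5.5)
The plan is to use the transfer-matrix description announced in the introduction (Proposition~\ref{prop-L}), namely $h_{C_{n,2}}(s)=\operatorname{tr}(B_{s+1}^n)$ with $B_{s+1}^0=I_{s+1}$. Then Theorem~\ref{theo-2} reduces to a determinant identity. If $\lambda_0,\dots,\lambda_s$ are the eigenvalues of $B_{s+1}$, then
\[
FC_2(s,y)=\sum_{n\ge 0}\operatorname{tr}(B_{s+1}^n)y^n=\sum_{i=0}^{s}\frac{1}{1-\lambda_i y}
=(s+1)+\sum_{i=0}^s\frac{\lambda_i y}{1-\lambda_i y}
=(s+1)-y\frac{d}{dy}\log\det(I-yB_{s+1}).
\]
So it suffices to prove $\det(I_{s+1}-yB_{s+1})=Q_s(y)$. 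Since both sides have constant term $1$, no normalisation issue arises. This is presumably also the denominator identity behind Theorem~\ref{theo-1}, whose proof I would reuse.

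\textbf{The transfer matrix.} I would first make $B_{s+1}$ explicit. Index states by the label $a\in\{0,\dots,s\}$ of a non-loop edge. A vertex with incident non-loop labels $a,b$ and two loops admits $(s+1-a-b)_+$ choices of loop labels. Hence $(B_{s+1})_{ab}=(s+1-a-b)_+$. The key structural observation is the factorisation
\[
B_{s+1}=URU^{\top},\qquad U_{ai}=[a\le i],\quad R_{ij}=[i+j=s].
\]
It holds because $\#\{i: a\le i,\ s-i\ge b\}=(s+1-a-b)_+$. Consequently
\[
\det(I-yB_{s+1})=\det(I-yRM),\qquad M=U^{\top}U,\quad M_{ij}=\min(i,j)+1.
\]
Also $M^{-1}$ is the tridiagonal discrete Laplacian, with a boundary modification.

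\textbf{The recurrence.} With $D_s(y):=\det(I-yRM)=\det(M)\det(M^{-1}-yR)$, the matrix $M^{-1}-yR$ is tridiagonal plus $-y$ on the antidiagonal. This is a ``cross'' (X-shaped) sparse matrix. I would expand it by peeling off its outer frame, meaning the first and last rows and columns simultaneously. This relates $D_s$ to determinants of the same shape of size $s-1$ and $s-3$, together with an auxiliary family of bordered determinants.

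Because $R$ reverses indices and $R^2=I$, the peeled inner block appears with the antidiagonal contribution reflected. After re-centering, this produces the substitution $y\mapsto -y$ in the odd-shift terms. I would eliminate the auxiliary family between two consecutive expansions to get
\[
D_s(y)=-y\,[D_{s-1}(-y)+D_{s-3}(-y)]+2D_{s-2}(y)-D_{s-4}(y).
\]
I would then check the initial values $D_0,\dots,D_3$ against $Q_0,\dots,Q_3$ by direct computation on matrices of size at most $4$. For example, $B_1=(1)$ gives $D_0=1-y$, and $B_2=\begin{pmatrix}2&1\\1&0\end{pmatrix}$ gives $D_1=1-2y-y^2$. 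Induction then yields $D_s=Q_s$ and hence the theorem.

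\textbf{Main obstacle.} The hard part is the determinant recurrence. Getting the parity structure right (the $-y$ substitution and the coefficient $2$ on $D_{s-2}$) depends on handling the central entry where the tridiagonal and antidiagonal parts meet, which differs for $s$ even and odd. My first attempt would be to split by parity of $s$. The alternative is to block-diagonalise $I-yRM$ with respect to the involution $R$, into symmetric and antisymmetric vectors. This should factor $D_s$ into two determinants of roughly half size, each satisfying a three-term recurrence. Combining those would give the four-term recurrence.
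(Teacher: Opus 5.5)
\paragraph{Overall assessment.} Your reduction is the paper's: the trace identity (your eigenvalue computation) turns the theorem into $\det(I-yB_{s+1})=Q_s(y)$, to be proved from a four-term recurrence plus initial values. Your factorization $B_{s+1}=URU^{\top}$ is correct, and it gives a cleaner proof of the paper's identity $f_{B_n^{-1}}=f_{A_n}$: indeed $U^{\top}B_{s+1}^{-1}U^{-\top}=RM^{-1}$, which is exactly the paper's $A_{s+1}$.

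\paragraph{The gap.} The recurrence itself is the whole content of the theorem, and your proposal only announces it. The mechanism you describe does not produce it.
\begin{itemize}
\item Peeling the outer frame of $M^{-1}-yR_{s+1}$ drops the size by two. It leaves the inner block $T^{D}_{s-1}-yR_{s-1}$, where $T^{D}_m$ is the tridiagonal matrix with every diagonal entry $2$ and off-diagonal entries $-1$.
\item That block is not of the same shape (the corner entry $1$ is lost), and $y$ keeps its sign. So nothing in this step accounts for $D_{s-1}(-y)$ or $D_{s-3}(-y)$, and ``$R$ reverses indices'' identifies no step where $y$ changes sign.
\item The fallback fails outright. $I-yRM$ commutes with $R$ only if $RMR=M$, but $(RMR)_{00}=M_{ss}=s+1\neq 1=M_{00}$.
\item More decisively, $D_3=Q_3=1-6y-7y^2+2y^3+y^4$ is irreducible over $\mathbb{Q}$. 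So $D_3$ cannot be a product of two $2\times 2$ block determinants over $\mathbb{Q}$, and the half-size factorization you hope for does not exist.
\end{itemize}

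\paragraph{What is missing.} You need a second family and two relations tying it to $D_s$; the paper uses $f_{D_n}$ with \eqref{eq-fCn-fDn1} and \eqref{eq-fBD}. In your notation, put $K_m(y)=\det(T^{D}_m-yR_m)$. Then
\[
\text{(i)}\quad K_{s+1}(y)-K_{s-1}(y)=D_s(y)+D_{s-2}(y),\qquad
\text{(ii)}\quad D_s(y)-D_{s-2}(y)=-y\,K_s(-y).
\]

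For (i):
\begin{itemize}
\item $T^{D}_{s+1}$ differs from $M^{-1}$ only in its last diagonal entry. Linearity in the last row gives $K_{s+1}=D_s+G$, where $G$ is the leading $s\times s$ principal minor of $T^{D}_{s+1}-yR_{s+1}$.
\item Split the first row of $G$ as $(2,-1,0,\dots)=(1,0,\dots)+(1,-1,0,\dots)$. This gives $G=K_{s-1}+D_{s-2}$; for the second piece, apply $C_2\leftarrow C_2+C_1$ and conjugate by $R$.
\end{itemize}

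For (ii), which is where $y\mapsto -y$ really comes from, use a different form of the same determinant:
\begin{itemize}
\item Your factorization together with $RU^{\top}R=U$ gives $B_{s+1}=U^2R$. Hence $D_s=\det\bigl((I-S)^2-yR_{s+1}\bigr)$, with $S$ the upper shift.
\item The last row of this matrix is $(-y,0,\dots,0,1)$.
\item Deleting the last row and first column re-centres the one-sided band $(1,-2,1)$ into $-T^{D}_s$. So the minor of $-y$ is $\det(-T^{D}_s-yR_s)=(-1)^sK_s(-y)$.
\item The cofactor of $1$ collapses to $D_{s-2}$ after one expansion along its first column $(1,0,\dots,0)^{\top}$.
\end{itemize}

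Subtract (ii) at $s-2$ from (ii) at $s$, then insert (i) at $s-1$ with $y\mapsto -y$. This gives exactly the recurrence defining $Q_s$. Your $R$-splitting idea does work for $K_m$, because $T^{D}_m$ is centrosymmetric, but it does not work for $D_s$.
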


Our second result characterizes $h_{C_{n,\mathbf{k}}}(s)$ for arbitrary vectors $\mathbf{k} = (k_1, \ldots, k_n) \in \mathbb{P}^n$.
\begin{theorem}\label{theo-hck}
For $\mathbf{k} = (k_1, \ldots, k_n) \in \mathbb{P}^n$ and $s \in \mathbb{P}$:
\[
h_{C_{n,\mathbf{k}}}(s) = \varphi_{n,\mathbf{k}}(s) + (-1)^s \frac{1 + (-1)^{n+1}}{2^{\sum_{i=1}^{n}k_i + 2}},
\]
where $\varphi_{n,\mathbf{k}}(s)$ is a degree-$\sum_{i=1}^n k_i$ polynomial.
\end{theorem}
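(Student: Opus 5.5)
The plan is to separate the parities of $n$ first. Then, for odd $n$, we eliminate the cycle-edge labels so that $h_{C_{n,\mathbf{k}}}(s)$ becomes a lattice-point count with a parity condition, and the $(-1)^s$ part becomes an explicit signed sum. Write $K=\sum_{i=1}^n k_i$. Let $x_i$ be the label of the cycle edge $v_iv_{i+1}$ (indices mod $n$). Let $\ell_{i,1},\dots,\ell_{i,k_i}$ be the loop labels at $v_i$, and put $y_i=\sum_j \ell_{i,j}$. The magic condition reads $x_{i-1}+x_i+y_i=s$ for all $i$.

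\emph{Even $n$ and the degree.} When $n$ is even, the loop-free part of $C_{n,\mathbf{k}}$ is an even cycle, hence bipartite. Theorem \ref{theo-Stanley} then gives $\psi=0$, which matches the factor $\frac{1+(-1)^{n+1}}{2}=0$. For the degree, $h_{C_{n,\mathbf{k}}}(s)$ counts the lattice points of the $s$-th dilate of the polytope of magic labellings of sum $1$. This polytope lives in $\mathbb{R}^{n+K}$ and is cut out by $n$ independent equations, so its dimension is $K$. It has nonempty relative interior (take $x_i$ small and all loops positive, which is possible since every $k_i\ge1$). Hence the Ehrhart leading term has degree exactly $K$ in $\varphi_{n,\mathbf{k}}$.

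\emph{Odd $n$: elimination.} For odd $n$ the circulant system $x_{i-1}+x_i=s-y_i$ is invertible. Explicitly,
\[
x_i=\tfrac12\Bigl(s-\sum_{j}\epsilon_{ij}y_j\Bigr)
\]
with alternating signs $\epsilon_{ij}=\pm1$ and $\sum_j\epsilon_{ij}=1$. Integrality of all $x_i$ is equivalent to the single condition $\sum_i y_i\equiv s \pmod 2$, since $ns-\sum_i y_i=2\sum_i x_i$ and $n$ is odd. Therefore
\[
h_{C_{n,\mathbf{k}}}(s)=\sum_{\ell\in sP\cap\mathbb{N}^K}\frac{1+(-1)^{s+|\ell|}}{2}
=\tfrac12 N(s)+\tfrac12(-1)^s S(s).
\]
Here $P=\{\ell\ge0:\ \sum_j\epsilon_{ij}y_j\le 1\ \forall i\}$, $N(s)=\#(sP\cap\mathbb{N}^K)$, and $S(s)=\sum_{\ell\in sP\cap\mathbb{N}^K}(-1)^{|\ell|}$. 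Comparing with Theorem \ref{theo-Stanley}, it then suffices to show two things: $N(s)$ is a polynomial up to a $(-1)^s$ constant, and $S(s)$ is a constant plus a polynomial times $(-1)^s$. Together these must show that the total $(-1)^s$-coefficient is the constant $2^{-K-1}$.

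\emph{Main step and obstacle.} The hard part is evaluating the periodic constant. I would do this with the generating function $H(t)=\sum_s h(s)t^s$ and analyse its pole at $t=-1$. The claim is that this pole is simple with residue-type coefficient $2^{-K-1}$, i.e.\ $(1+t)H(t)\to 2^{-K-1}$ as $t\to-1$.

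First step: by Brion's theorem, or by the MacMahon constant-term representation
\[
H(t)=\CT_{\lambda}\ \prod_i\frac{1}{(1-\lambda_i)^{k_i}(1-\lambda_i\lambda_{i+1})}\cdot\frac{1}{1-t\,\lambda_1^{-1}\cdots\lambda_n^{-1}},
\]
only the non-integral vertex of the labelling polytope contributes a pole at $t=-1$. That vertex is $x_i=\tfrac12$ with all loops $0$. Its tangent cone is generated by the directions that move one unit of weight into a loop at $v_i$. Each such direction is compensated on the odd cycle by alternating $\pm\tfrac12$ changes of the $x$'s, so the cone is simplicial with $K$ rays.

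Second step: I would compute its lattice-point generating function explicitly. Evaluated at $t\to-1$, each of the $K$ ray factors should contribute $\frac{1}{1-(-1)}=\frac12$. A further factor $\frac12$ comes from the index-$2$ sublattice (the parity condition), and one more from the apex factor $\frac{1}{1-t^2}$ behaviour at $t=-1$. This gives $2^{-K-2}\cdot 2=2^{-K-1}$ for odd $n$, i.e.\ $\frac{1+(-1)^{n+1}}{2^{K+2}}$.

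Checking that these factors of $2$ are accounted for exactly is the delicate part. I would verify the bookkeeping against the case $C_{1,2}$ from the Remark: there $K=2$ and $h(s)=\lfloor (s+2)^2/4\rfloor$, whose $(-1)^s$ part is $-\tfrac18$. Note that this sign is negative, so the residue computation must also track the sign, and I would recheck the sign convention there before finalising.
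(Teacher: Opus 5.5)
Your reduction is sound: the even case via bipartiteness and the dimension count are both correct. So, for odd $n$, is the elimination $h_{C_{n,\mathbf{k}}}(s)=\tfrac12 N(s)+\tfrac12(-1)^s S(s)$. The gap is the one step that carries the content of the theorem, the value $2^{-K-1}$ of the alternating constant. You never derive it, and the bookkeeping you sketch for it does not hold together. You take $2^{-K}$ from the rays, a $\tfrac12$ from the index-$2$ sublattice and another $\tfrac12$ from the apex factor $1/(1-t^2)$, which gives $2^{-K-2}$. You then multiply by a $2$ that nothing in your computation produces; it is just the numerator $1+(-1)^{n+1}$ of the target. The sublattice $\tfrac12$ and the apex $\tfrac12$ are the same factor seen in two coordinate systems, so you count it twice. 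In the homogenized cone the simplex at $v^{\mathrm{frac}}$ is unimodular (this is the paper's observation that the fundamental parallelepiped of $\Delta_n$ contains only the origin). There is therefore no index factor, and the $\tfrac12$ is $(1+t)/(1-t^2)$ at $t=-1$. In your $\ell$-coordinates there is no $t^2$ apex, and the $\tfrac12$ is the parity projector. Your test case is also miscomputed: $\lfloor (s+2)^2/4\rfloor=\tfrac{(s+2)^2}{4}-\tfrac18+\tfrac{(-1)^s}{8}$. So the alternating part for $C_{1,2}$ is $+\tfrac18=2^{-K-1}$ with $K=2$, exactly as stated, and there is no sign problem to chase.

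The gap closes inside your own framework. Summing the $n$ equations gives $|\boldsymbol{\alpha}|=n-2|\boldsymbol{\beta}|$ on the polytope, and the only non-integral vertex $v^{\mathrm{frac}}$ maps to the origin. Hence in $\ell$-coordinates $P$ is a lattice polytope, so $N(s)$ is a genuine polynomial. Every other vertex $w$ has $|w|$ odd, so $sw$ lies in the coset $L_s=\{\ell\in\mathbb{Z}^K: |\ell|\equiv s \pmod 2\}$ that you are counting. Apply Brion's theorem to $sP$ with respect to $L_s$. At the origin every constraint $\sum_j\epsilon_{ij}y_j\le s$ is slack, so the tangent cone is the orthant, whose $L_s$-series is $\tfrac12\prod_{i,j}(1-z_{i,j})^{-1}+\tfrac{(-1)^s}{2}\prod_{i,j}(1+z_{i,j})^{-1}$. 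The second summand is regular at $z=\mathbf{1}$ with value $2^{-K-1}$. Since the total is a Laurent polynomial, the remaining terms (the first summand and the vertex terms $z^{sw}\sigma_{T_w}(z)$) are regular there too. Their value at $\mathbf{1}$ is a polynomial in $s$ by the usual expansion along $z=e^{\tau c}$. Equivalently, cut $P$ along $|\ell|=1$. The lower piece is the standard simplex, whose signed series $\frac{1}{(1-t)(1+t)^K}$ contributes the constant $2^{-K}$ to $S(s)$. The other piece and the common facet have only vertices with $|w|$ odd, so they contribute $(-1)^s$ times a polynomial. This handles every $\mathbf{k}$ at once. The paper makes the analogous cut (along $\sum_i\beta_i=\frac{n-1}{2}$) only for $\mathbf{k}=\mathbf{1}$. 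It reads off the singular part $\frac{1}{(1-t^2)\prod_i(1-x_it)}$ from the unimodular simplex $\Delta_n$, then lifts to general $\mathbf{k}$ with the operators $\frac{1}{(k_i-1)!}\mathrm{D}_{x_i}^{k_i-1}x_i^{k_i-1}$.
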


\subsection{Structure}
Section \ref{sec-2} establishes Theorems \ref{theo-1} and \ref{theo-2} using transfer matrix methods. Section \ref{sec-3} proves Theorem \ref{theo-hck} via polytope decomposition and the constant term method. Section \ref{sec-4} analyzes the generating functions $h_{C_{n,2}}(s)$ and $h_{L_{n,2}}(s)$ with respect to the magic sum $s$.

\section{Proof of Theorems \ref{theo-1} and \ref{theo-2}}\label{sec-2}
In this section, we prove Theorems \ref{theo-1} and \ref{theo-2} using the transfer matrix method and the matrix algebraic computation method.

\subsection{Transfer matrix method}
This subsection is devoted to determining the numbers $h_{C_{n,2}}(s)$ and $h_{L_{n,2}}(s)$ of magic labelings by constructing a transfer matrix.

The numbers of magic labelings for the pseudo-line graph $h_{L_{n,2}}(s)$ and the pseudo-cycle graph $h_{C_{n,2}}(s)$ are given by the number of nonnegative integer solutions to the following systems of linear inequalities, respectively.
\begin{equation}\label{eq-CLn}
(1)\begin{cases}
   x_1     + x_2     + x_3  \leq  s, \\
   x_3     + x_4     + x_5  \leq  s, \\
         \vdots            \\
   x_{2n-1} + x_{2n} + x_{2n+1}  \leq  s, \\
\end{cases}\quad
(2)\begin{cases}
   x_1     + x_2     + x_3  \leq  s, \\
   x_3     + x_4     + x_5  \leq  s, \\
         \vdots\\
   x_{2n-1} + x_{2n} + x_1  \leq  s. \\
\end{cases}
\end{equation}

Let $a_{ij}(n)$ count the nonnegative integer solutions to system (1) in \eqref{eq-CLn}
with boundary conditions $x_1=i$ and $x_{2n+1}=j$.
The magic labelling counts become
\begin{equation*}
h_{L_{n,2}}(s)=\sum_{i=0}^{s}\sum_{j=0}^{s}a_{ij}(n),\quad
h_{C_{n,2}}(s)=\sum_{i=0}^{s}a_{ii}(n).
\end{equation*}

We define the base transfer matrix $B_{s+1}=(a_{ij}(1))$ $(i,j=0,\dots,s)$ to be the $(s+1)\times (s+1)$ matrix with entries $a_{ij}(1)$ given by
\[
a_{ij}(1)=
\begin{cases}
s+1-i-j, & \text{if }i+j \leq s,\\
0, & \text{otherwise}.
\end{cases}
\]

\begin{exam}For $n=5,6$,
\[
B_5=
\begin{pmatrix}
 5&    4&    3&    2&   1\\
 4&    3&    2&    1&   0\\
 3&    2&    1&    0&   0\\
 2&    1&    0&    0&   0\\
 1&    0&    0&    0&   0
\end{pmatrix}
,\quad
B_6=
\begin{pmatrix}
 6&    5&    4&    3&   2&  1\\
 5&    4&    3&    2&   1&  0\\
 4&    3&    2&    1&   0&  0\\
 3&    2&    1&    0&   0&  0\\
 2&    1&    0&    0&   0&  0\\
 1&    0&    0&    0&   0&  0
\end{pmatrix}.
\]
\end{exam}

Let $\mathbf{u}_n = (1,1,\dots,1)^\top \in \mathbb{R}^n$ be the all-ones vector. Using the transfer matrix method, we obtain the following proposition.
\begin{prop}\label{prop-L}
For $n,s \in \mathbb{P}$,
\begin{equation}\label{eq-hLC}
h_{L_{n,2}}(s)=\mathbf{u}_{s+1}^\top B_{s+1}^n\mathbf{u}_{s+1}
,\quad
h_{C_{n,2}}(s)=\mathrm{trace}(B_{s+1}^n).
\end{equation}
\end{prop}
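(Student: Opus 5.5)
We prove both identities by expanding the matrix powers as sums over chains of boundary values. The shared variables $x_1,x_3,\dots,x_{2n+1}$ (the labels on the non-loop edges) determine everything else. Once $x_{2i-1}$ and $x_{2i+1}$ are fixed, the $i$-th inequality $x_{2i-1}+x_{2i}+x_{2i+1}\le s$ involves only the single free variable $x_{2i}$.

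First we check that this local count equals the matrix entry. For fixed $x_{2i-1}=p$ and $x_{2i+1}=q$, the number of $x_{2i}\in\mathbb{N}$ with $x_{2i}\le s-p-q$ is $s+1-p-q$ when $p+q\le s$, and $0$ otherwise. This is exactly $a_{pq}(1)=(B_{s+1})_{pq}$, with rows and columns indexed by $0,\dots,s$. The two loop labels at vertex $i$ have been merged here into one slack variable: the magic condition says that the two loops plus the incident non-loop edges sum to $s$. This merging is what turns the magic labelling count into the count of solutions of the inequality systems \eqref{eq-CLn}, as set up above. Any value $p$ of a shared variable with $p>s$ gives no solutions, so restricting indices to $0,\dots,s$ loses nothing.

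Next, by induction on $n$ we show $a_{ij}(n)=(B_{s+1}^n)_{ij}$. Condition on the value $k$ of $x_{2n-1}$. The first $n-1$ inequalities then involve only $x_1,\dots,x_{2n-1}$, and the last inequality involves only $x_{2n-1},x_{2n},x_{2n+1}$. The solution set therefore factors, giving
\[
a_{ij}(n)=\sum_{k=0}^{s}a_{ik}(n-1)\,a_{kj}(1),
\]
which is the matrix product. The base case $n=1$ is the definition of $B_{s+1}$.

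Finally we sum over the boundary values. For the line system (1), $x_1$ and $x_{2n+1}$ are free, so
\[
h_{L_{n,2}}(s)=\sum_{i,j}(B_{s+1}^n)_{ij}=\mathbf{u}_{s+1}^\top B_{s+1}^n\mathbf{u}_{s+1}.
\]
For the cycle system (2), the last inequality uses $x_1$ in place of $x_{2n+1}$. This is the line system with the constraint $x_{2n+1}=x_1$ imposed, so
\[
h_{C_{n,2}}(s)=\sum_i a_{ii}(n)=\mathrm{trace}(B_{s+1}^n).
\]

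The main point needing care is the translation between the graphs and the systems \eqref{eq-CLn}, especially for small cycles. For $n=1$, the single non-loop edge is counted twice at its vertex, matching the remark on $C_{1,m}$. For $n=2$ there is a double edge between the two vertices, and the cycle system correctly treats $x_1$ and $x_3$ as distinct labels. Once these cases are checked, the rest is the routine transfer-matrix argument.
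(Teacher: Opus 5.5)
Your proposal is correct and is essentially the paper's argument: the paper only invokes the transfer-matrix method after defining $a_{ij}(n)$ and $B_{s+1}=(a_{ij}(1))$. Your factorization $a_{ij}(n)=\sum_{k=0}^{s}a_{ik}(n-1)\,a_{kj}(1)$ (conditioning on $x_{2n-1}$), together with reading the cycle system (2) in \eqref{eq-CLn} as system (1) with $x_{2n+1}=x_1$, supplies exactly the details it leaves implicit. (A minor wording point: in \eqref{eq-CLn} the variable $x_{2i}$ is one loop label and the other loop acts as the slack, rather than the two loops being merged into one variable; since exactly $r+1$ ordered pairs in $\mathbb{N}^2$ sum to $r$, the local count $s+1-p-q$ is the same either way.)
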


From the matrix expressions in \eqref{eq-hLC}, we derive the following generating functions.
\begin{lem}\label{eq-FCL2}
For $n,s \in \mathbb{P}$, the generating functions are given by
\[
FL_{2}(s,y)=\sum\nolimits_{n=0}^{+\infty}\mathbf{u}_{s+1}^\top B_{s+1}^n\mathbf{u}_{s+1}y^n
,\quad
FC_2(s,y)=\sum\nolimits_{n=0}^{+\infty}\mathrm{trace}\big(B_{s+1}^n\big)y^n.
\]
\end{lem}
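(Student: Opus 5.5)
The plan is to read the lemma off Proposition~\ref{prop-L} term by term, using the definitions of $FL_2$ and $FC_2$, and to check the single boundary index $n=0$ separately.

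\textbf{Step 1: rewrite the generating functions.} By definition,
\[
FL_2(s,y)=\sum_{n\ge 0}h_{L_{n,2}}(s)\,y^n,\qquad FC_2(s,y)=\sum_{n\ge 0}h_{C_{n,2}}(s)\,y^n .
\]
For every $n\ge 1$, Proposition~\ref{prop-L} gives
\[
h_{L_{n,2}}(s)=\mathbf{u}_{s+1}^\top B_{s+1}^n\mathbf{u}_{s+1},\qquad h_{C_{n,2}}(s)=\mathrm{trace}(B_{s+1}^n).
\]
So the coefficients of $y^n$ agree for all $n\ge1$.

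\textbf{Step 2: the term $n=0$.} Here we use the convention $B_{s+1}^0=I_{s+1}$. Then
\[
\mathbf{u}_{s+1}^\top I_{s+1}\mathbf{u}_{s+1}=s+1,\qquad \mathrm{trace}(I_{s+1})=s+1.
\]
Both equal the value $h_{L_{0,2}}(s)=h_{C_{0,2}}(s)=s+1$ fixed by convention in the introduction. Summing over $n$ then gives both identities as formal power series in $y$.

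\textbf{Step 3 (optional): analytic convergence.} Convergence is not needed in the formal setting. If one wants it, note that $B_{s+1}$ is a fixed finite nonnegative matrix. Hence both series converge for $|y|<1/\rho(B_{s+1})$. On that disc they sum to $\mathbf{u}^\top(I-yB)^{-1}\mathbf{u}$ and $\mathrm{trace}\,(I-yB)^{-1}$ respectively; this closed form feeds the later proofs of Theorems~\ref{theo-1} and~\ref{theo-2}.

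The only point needing care is the $n=0$ normalization, which Step 2 settles. Everything else is immediate from Proposition~\ref{prop-L}.
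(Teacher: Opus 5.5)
Your proposal is correct and follows the paper's approach: the paper gives no separate proof and reads the lemma directly off Proposition~\ref{prop-L}, with the $n=0$ term handled by the convention $h_{L_{0,2}}(s)=h_{C_{0,2}}(s)=s+1$ and $B_{s+1}^0=I_{s+1}$ fixed in the introduction. Your optional Step 3 is the closed-form summation that the paper carries out next, in Lemma~\ref{lem-FLn} and Corollary~\ref{cor-FCn}.
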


Furthermore, the generating function $FL_{2}(s,y)$ can be written in closed form as follows.
\begin{lem}\label{lem-FLn}
For $n,s \in \mathbb{P}$,
\begin{equation}\label{eq-FLn}
FL_{2}(s,y)=\frac{\mathbf{u}_{s+1}^\top(I-yB_{s+1})^*\mathbf{u}_{s+1}}{\det(I-yB_{s+1})},
\end{equation}
where $M^*$ denotes the adjugate matrix of $M$.
\end{lem}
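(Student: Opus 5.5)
Starting from Lemma \ref{eq-FCL2}, we write $FL_2(s,y)=\sum_{n\ge 0}\mathbf{u}_{s+1}^\top B_{s+1}^n\mathbf{u}_{s+1}\,y^n$ and pull the vector products outside the sum. Since $B^0=I_{s+1}$ by convention, this gives
\[
FL_2(s,y)=\mathbf{u}_{s+1}^\top\Big(\sum_{n\ge 0} y^n B_{s+1}^n\Big)\mathbf{u}_{s+1}.
\]
The plan is to identify the inner matrix series with $(I-yB_{s+1})^{-1}$ and then apply Cramer's rule, $M^{-1}=M^*/\det M$, with $M=I-yB_{s+1}$.

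The justification should be made in the ring of formal power series $\mathbb{Q}[[y]]$, or equivalently in the matrix ring $M_{s+1}(\mathbb{Q}[[y]])$.

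\begin{itemize}
\item The telescoping identity $(I-yB)\sum_{n=0}^{N}y^nB^n=I-y^{N+1}B^{N+1}$ holds for every $N$.
\item Letting $N\to\infty$ in the $y$-adic topology gives $(I-yB)\sum_{n\ge0}y^nB^n=I$ coefficientwise, since $y^{N+1}B^{N+1}$ has no terms of degree at most $N$.
\item The determinant $\det(I-yB)$ is a polynomial with constant term $1$, so it is invertible in $\mathbb{Q}[[y]]$. Hence $I-yB$ is invertible over $\mathbb{Q}[[y]]$, and the geometric series equals its inverse.
\item The adjugate formula $M\,M^*=\det(M)\,I$ holds over any commutative ring. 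Therefore $(I-yB)^{-1}=(I-yB)^*/\det(I-yB)$.
\end{itemize}

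Substituting this back and using bilinearity of $\mathbf{u}^\top(\cdot)\mathbf{u}$ yields \eqref{eq-FLn}. The scalar $1/\det(I-yB_{s+1})$ factors out of the bilinear form.

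The only point requiring care is whether the identity is read formally or analytically. Analytically it holds for $|y|<1/\rho(B_{s+1})$, where $\rho$ is the spectral radius. Formally it holds in $\mathbb{Q}[[y]]$, which is the natural reading for a generating function. I would state the formal version, which avoids any convergence discussion.

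I would also check consistency with the convention $h_{L_{0,2}}(s)=s+1=\mathbf{u}^\top\mathbf{u}$: the constant term of the right-hand side is $\mathbf{u}^\top I\,\mathbf{u}/1=s+1$.
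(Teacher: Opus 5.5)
Your proposal is correct and follows the same route as the paper: you write $FL_2(s,y)=\mathbf{u}_{s+1}^\top\bigl(\sum_{n\ge0}(yB_{s+1})^n\bigr)\mathbf{u}_{s+1}$, identify the geometric series with $(I-yB_{s+1})^{-1}$, and apply the adjugate (Cramer) formula. Your formal-power-series justification, namely the telescoping identity and the invertibility of $\det(I-yB_{s+1})$ in $\mathbb{Q}[[y]]$, makes explicit what the paper leaves implicit.
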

\begin{proof}
Recall that
\[
FL_{2}(s,y)=\sum\nolimits_{n=0}^{+\infty}\mathbf{u}_{s+1}^\top B_{s+1}^n\mathbf{u}_{s+1}y^n=\mathbf{u}_{s+1}^\top\left(\sum\nolimits_{n=0}^{+\infty} (B_{s+1}y)^n\right)\mathbf{u}_{s+1}.
\]
Note that the sum inside is the power series for $(I-yB_{s+1})^{-1}$. Applying Cramer's rule to this inverse,
\[
(I - yB_{s+1})^{-1} = \frac{(I - yB_{s+1})^*}{\det(I - yB_{s+1})},
\]
the result then follows immediately. \qedhere
\end{proof}

To compute $FC_2(s,y)$, we invoke a classical result due to Stanley.
\begin{lem}(Stanley's Trace Identity \cite[Theorem 4.7.2, Corollary 4.7.3]{Stanley-EC1}).\label{lem-Stanley}
For any square matrix $M \in \mathbb{C}^{s\times s}$, the trace generating function satisfies:
\[
\sum\limits_{n=1}^{+\infty}\mathrm{trace}\big(M^n\big)y^n=-\frac{y\displaystyle\frac{\mathrm{d} (\det(I-yM))}{\mathrm{d} y}}{\det(I-yM)}.
\]
\end{lem}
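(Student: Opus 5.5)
The plan is to derive the identity from the spectral decomposition of $M$. Working over $\mathbb{C}$, I will factor the characteristic polynomial of $M$ and rewrite the reversed determinant as
\[
\det(I-yM)=\prod_{i=1}^{s}(1-\lambda_i y),
\]
where $\lambda_1,\dots,\lambda_s$ are the eigenvalues of $M$ listed with algebraic multiplicity. This follows from $\det(xI-M)=\prod_i(x-\lambda_i)$ by substituting $x=1/y$ and multiplying through by $y^s$. Since both sides are polynomials, the identity holds for all $y$.

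Next I need the traces of powers of $M$. Triangularizing $M$ (by Schur or Jordan form) gives $M=PTP^{-1}$ with $T$ upper triangular and diagonal entries $\lambda_i$. Then $M^n=PT^nP^{-1}$, and $T^n$ is upper triangular with diagonal $\lambda_i^n$. Hence
\[
\operatorname{trace}(M^n)=\sum_{i=1}^{s}\lambda_i^n
\]
for every $n\ge 1$.

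Then I take the logarithmic derivative of the factorization. As formal power series in $y$, where $\log(1-\lambda_i y)$ is well defined because the constant term is $1$, we have
\[
-\,y\frac{\frac{d}{dy}\det(I-yM)}{\det(I-yM)}
=-\,y\sum_{i=1}^{s}\frac{-\lambda_i}{1-\lambda_i y}
=\sum_{i=1}^{s}\frac{\lambda_i y}{1-\lambda_i y}
=\sum_{i=1}^{s}\sum_{n\ge1}\lambda_i^n y^n .
\]
Exchanging the two finite-by-formal sums and applying the trace formula from the previous step gives $\sum_{n\ge1}\operatorname{trace}(M^n)y^n$, which is the claim.

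There is no real obstacle in this argument. The only point needing care is that everything is an identity of formal power series, or of rational functions expanded at $y=0$. The logarithmic-derivative step is legitimate because $\frac{d}{dy}\prod_i f_i\big/\prod_i f_i=\sum_i f_i'/f_i$ holds for any nonzero polynomials $f_i$, so no convergence question arises. Repeated eigenvalues cause no trouble, because they are counted with multiplicity in both the factorization and the trace formula.
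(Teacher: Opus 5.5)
Your proof is correct. The paper gives no argument of its own and simply cites Stanley (EC1, Corollary 4.7.3), whose proof is the same as yours: write $\mathrm{trace}(M^n)=\sum_i\lambda_i^n$, factor $\det(I-yM)=\prod_i(1-\lambda_i y)$, and take the logarithmic derivative, with your formal-power-series and multiplicity remarks covering the only delicate points.
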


This identity leads to the following closed-form expression for the generating function $FC_2(s,y)$.
\begin{cor}\label{cor-FCn}
For $n,s \in \mathbb{P}$,
\begin{equation}\label{eq-FCn}
FC_2(s,y)=-\frac{y\displaystyle\frac{\mathrm{d} (\det(I-yB_{s+1}))}{\mathrm{d} y}}{\det(I-yB_{s+1})}+s+1.
\end{equation}
\end{cor}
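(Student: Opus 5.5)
The plan is to combine Lemma \ref{eq-FCL2} with Stanley's Trace Identity (Lemma \ref{lem-Stanley}), applied with $M = B_{s+1}$. The corollary then follows once we separate off the $n=0$ term. By Lemma \ref{eq-FCL2},
\[
FC_2(s,y)=\mathrm{trace}\big(B_{s+1}^0\big)+\sum\nolimits_{n=1}^{+\infty}\mathrm{trace}\big(B_{s+1}^n\big)y^n .
\]
Under the convention that $B_{s+1}^0=I_{s+1}$, the constant term equals $\mathrm{trace}(I_{s+1})=s+1$. This agrees with the normalization $h_{C_{0,2}}(s)=s+1$ fixed in the introduction.

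For the remaining sum, we apply Lemma \ref{lem-Stanley} to the $(s+1)\times(s+1)$ matrix $B_{s+1}$. It states that
\[
\sum\nolimits_{n=1}^{+\infty}\mathrm{trace}\big(B_{s+1}^n\big)y^n=-\frac{y\frac{\mathrm{d}}{\mathrm{d}y}\det(I-yB_{s+1})}{\det(I-yB_{s+1})}.
\]
Adding the constant term $s+1$ gives \eqref{eq-FCn}. If one wants to see why the trace identity holds, the justification is short. Write $\det(I-yB_{s+1})=\prod_j(1-\lambda_j y)$ over the eigenvalues $\lambda_j$ of $B_{s+1}$. Then $-y\frac{\mathrm{d}}{\mathrm{d}y}\log\det(I-yB_{s+1})=\sum_j \frac{\lambda_j y}{1-\lambda_j y}=\sum_{n\ge1}\big(\sum_j\lambda_j^n\big)y^n$, and $\sum_j\lambda_j^n=\mathrm{trace}(B_{s+1}^n)$.

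There is no real obstacle here. The only point that needs care is bookkeeping: the trace identity starts at $n=1$, while $FC_2$ starts at $n=0$. Everything is an identity of formal power series in $y$, because $\det(I-yB_{s+1})$ has constant term $1$, so no convergence issues arise.
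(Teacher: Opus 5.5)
Your proposal is correct and matches the paper's derivation: the paper also obtains this corollary by applying Stanley's trace identity (Lemma \ref{lem-Stanley}) with $M=B_{s+1}$ to the series in Lemma \ref{eq-FCL2}, with the $n=0$ term $\mathrm{trace}(I_{s+1})=s+1$ giving the additive constant. Your eigenvalue argument for the trace identity is correct; the paper simply cites that identity.
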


\subsection{Establishing Relations for the Characteristic Polynomials of $A_n$, $B_n$, $B_n^{-1}$ and $D_n$}
This subsection introduces the matrices $A_n$, and $D_n$, and examines the relations between $A_n$, $B_n$, $B_n^{-1}$ and $D_n$ characteristic polynomials, with the goal of ultimately determining closed-form expressions for $FL_{2}(s,y)$ and $FC_2(s,y)$. In the following, for brevity, we write $B_n$ for the transfer matrix of order $n$, where $n=s+1$.

We define two families of tridiagonal matrices with modified boundaries.
\begin{defn}
The matrices $A_n$ and $D_n$ are defined as follows.
\[
A_n := \begin{pmatrix}
0 & 0 & 0 & \cdots & 0 & -1 & 1 \\
0 & 0 & 0 & \cdots & -1 & 2 & -1 \\
0 & 0 & 0 & \iddots & 2 & -1 & 0 \\
\vdots & \vdots & \iddots & \iddots & \iddots & \vdots & \vdots \\
0 & -1 & 2 & \iddots & 0 & 0 & 0 \\
-1 & 2 & -1 & \cdots & 0 & 0 & 0 \\
2 & -1 & 0 & \cdots & 0 & 0 & 0
\end{pmatrix}, \quad
D_n := \begin{pmatrix}
0 & 0 & 0 & \cdots & 0 & -1 & 2 \\
0 & 0 & 0 & \cdots & -1 & 2 & -1 \\
0 & 0 & 0 & \iddots & 2 & -1 & 0 \\
\vdots & \vdots & \iddots & \iddots & \iddots & \vdots & \vdots \\
0 & -1 & 2 & \iddots & 0 & 0 & 0 \\
-1 & 2 & -1 & \cdots & 0 & 0 & 0 \\
2 & -1 & 0 & \cdots & 0 & 0 & 0
\end{pmatrix}.
\]
\end{defn}

\begin{exam}For $n=5$,
\[
A_5=
\begin{pmatrix}
 0&    0&    0&   -1&   1\\
 0&    0&   -1&    2&  -1\\
 0&   -1&    2&   -1&   0\\
-1&    2&   -1&    0&   0\\
 2&   -1&    0&    0&   0
\end{pmatrix}
,\quad
D_5=
\begin{pmatrix}
 0&    0&    0&   -1&   2\\
 0&    0&   -1&    2&  -1\\
 0&   -1&    2&   -1&   0\\
-1&    2&   -1&    0&   0\\
 2&   -1&    0&    0&   0
\end{pmatrix}.
\]
\end{exam}

Let $f_M(y) = \det(yI_n - M)$ denote the characteristic polynomial of a matrix $M \in \mathbb{R}^{n \times n}$.
We obtain the following relationship between $f_{A_n}(y)$ and $f_{D_{n}}(y)$.
\begin{lem}\label{lem-CD2}
For any positive integer $n\ge3$,
\begin{equation}\label{eq-fCn-fDn1}
f_{A_n}(y)-f_{A_{n-2}}(y)=f_{D_{n}}(y)+f_{D_{n-2}}(y).
\end{equation}
\end{lem}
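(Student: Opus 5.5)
The plan is to exploit the fact that $A_n$ and $D_n$ differ in exactly one entry: position $(1,n)$, which is $1$ in $A_n$ and $2$ in $D_n$. Write $N_n := yI_n - A_n$. Then $yI_n - D_n$ agrees with $N_n$ except that its $(1,n)$ entry is $-2$ instead of $-1$. Since the determinant is linear in the first row,
\[
f_{A_n}(y) - f_{D_n}(y) = \bigl((-1)-(-2)\bigr)\,(-1)^{n+1}\det N_n^{(1,n)} = (-1)^{n+1}\det N_n^{(1,n)},
\]
where $N_n^{(1,n)}$ is $N_n$ with row $1$ and column $n$ deleted. Rearranged, the lemma says that this cofactor equals $f_{A_{n-2}}(y)+f_{D_{n-2}}(y)$, so the whole task is to evaluate it.

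The key structural observation is that the central block of $A_n$ (rows and columns $2,\dots,n-1$) is exactly $D_{n-2}$. Indeed, the second row of $A_n$, restricted to columns $2,\dots,n-1$, ends in $-1,2$, which is the first row of $D_{n-2}$; the interior rows are the shifted $(-1,2,-1)$ antidiagonal stencil; and row $n-1$ restricted to these columns begins $2,-1,0,\dots$. Consequently the minor $N_n^{(1,n)}$ (rows $2,\dots,n$, columns $1,\dots,n-1$) is a bordered matrix with three pieces:
\begin{itemize}
\item the block $yI_{n-2}-D_{n-2}$ in rows $2,\dots,n-1$, columns $2,\dots,n-1$;
\item an extra first column whose only nonzero entries are $+1$ (in row $n-1$) and $-2$ (in row $n$);
\item an extra last row $(-2,1,0,\dots,0)$, since the diagonal entry $y$ at $(n,n)$ has been deleted with column $n$.
\end{itemize}

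I would expand this bordered determinant along its last row, which gives two terms. The entry $-2$ in column $1$ contributes $\pm 2\det(yI_{n-2}-D_{n-2}) = \pm 2 f_{D_{n-2}}(y)$. The entry $1$ in column $2$ contributes a determinant of $yI_{n-2}-D_{n-2}$ with its first column replaced by a unit vector (the surviving $+1$ from row $n-1$ of $N_n$). That second determinant is a cofactor of $yI_{n-2}-D_{n-2}$ at its bottom-left corner. By the same one-entry linearity trick, applied now to $D_{n-2}$ versus $A_{n-2}$ (which differ at the corner entry $(1,n-2)$, i.e.\ the $2$ versus $1$ in the top-right), I expect to rewrite this cofactor as a multiple of $f_{D_{n-2}}-f_{A_{n-2}}$, up to sign. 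Combining the two pieces should then give $\pm\bigl(2f_{D_{n-2}} - (f_{D_{n-2}}-f_{A_{n-2}})\bigr) = \pm\bigl(f_{A_{n-2}}+f_{D_{n-2}}\bigr)$, which is the claim.

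The main obstacle is bookkeeping in this last step, and I would settle it in two ways.
\begin{itemize}
\item \emph{Signs.} The antidiagonal layout, with $y$ on the main diagonal but the stencil on the antidiagonal, makes the signs $(-1)^{n+1}$ from the two cofactor expansions easy to get wrong. I would pin them down by checking $n=3,4$ directly, using $A_1=(1)$, $D_1=(2)$ and the $2\times 2$ cases.
\item \emph{Which corner.} The cofactor that appears sits at the bottom-left corner of $yI-D_{n-2}$, whereas $A_{n-2}$ and $D_{n-2}$ differ at the top-right. Because $A_m$ and $D_m$ are symmetric, transposition identifies the two corner cofactors, so the linearity argument still applies.
\end{itemize}
If the direct identification becomes murky, the fallback is to verify the identity for small $n$ and run an induction using the three-term recurrences that the corner cofactors satisfy under expansion along the first and last rows.
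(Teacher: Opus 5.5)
Your proposal is correct and is essentially the paper's argument. The paper likewise splits the first row of $yI_n-D_n$ to write $f_{D_n}=f_{A_n}+T_2$, with $T_2$ a signed copy of the same bordered minor. The only difference is the last step: instead of expanding along the last row $(-2,1,0,\dots,0)$ and applying the corner-linearity trick a second time, the paper splits that row as $(-1,1,0,\dots,0)+(-1,0,\dots,0)$. It then identifies the two pieces directly as $-f_{A_{n-2}}$ (after a row operation and a transpose) and $-f_{D_{n-2}}$.

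The signs you deferred do work out. With $m=n-2$ and $M=yI_m-D_m$, one gets $(-1)^{n+1}\det N_n^{(1,n)}=2f_{D_m}(y)+(-1)^{n+1}\det M^{(m,1)}$ and $\det M^{(m,1)}=(-1)^m\bigl(f_{D_m}(y)-f_{A_m}(y)\bigr)$. Only the symmetry of $D_m$ is used here; $A_m$ itself is not symmetric, since its $(1,m)$ and $(m,1)$ entries are $1$ and $2$.
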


\begin{proof}
Consider the determinant expansion of $f_{D_n}(y)$, which is given by
\[
f_{D_n}(y)
=\det(yI_n-D_{n})
=\det
\begin{pmatrix}
      y&         0&         0&    \cdots&        0&      1&    -2\\
      0&         y&         0&    \cdots&        1&      -2&    1\\
      0&         0&         y&    \cdots&        -2&     1&     0\\
 \vdots&    \vdots&    \vdots&          &   \vdots& \vdots& \vdots\\
      0&         1&        -2&    \cdots&        y&      0&    0\\
      1&        -2&         1&    \cdots&        0&      y&    0\\
     -2&         1&         0&    \cdots&        0&      0&    y
\end{pmatrix}.
\]

We decompose the first row vector as
 $$(y,0,0,\cdots,0,1,-2)=(y,0,0,\cdots,0,1,-1)+(0,0,0,\cdots,0,0,-1).$$
By the multilinearity of the determinant with respect to its rows, we obtain the following decomposition
\begin{equation}\label{eq-121}
f_{D_n}(y)=\underbrace{
\det\begin{pmatrix}
      y&         0&         0&    \cdots&        0&      1&    -1\\
      0&         y&         0&    \cdots&        1&      -2&    1\\
      0&         0&         y&    \cdots&        -2&     1&     0\\
 \vdots&    \vdots&    \vdots&          &   \vdots& \vdots& \vdots\\
      0&         1&        -2&    \cdots&        y&      0&    0\\
      1&        -2&         1&    \cdots&        0&      y&    0\\
     -2&         1&         0&    \cdots&        0&      0&    y
\end{pmatrix}}_{T_1}
+
\underbrace{\det\begin{pmatrix}
      0&         0&         0&    \cdots&        0&      0&    -1\\
      0&         y&         0&    \cdots&        1&      -2&    1\\
      0&         0&         y&    \cdots&        -2&     1&     0\\
 \vdots&    \vdots&    \vdots&          &   \vdots& \vdots& \vdots\\
      0&         1&        -2&    \cdots&        y&      0&    0\\
      1&        -2&         1&    \cdots&        0&      y&    0\\
     -2&         1&         0&    \cdots&        0&      0&    y
\end{pmatrix}}_{T_2}.
\end{equation}

Observe that $T_1 = f_{A_n}(y)$. For $T_2$, expanding along the first row gives
\[
T_2=(-1)^{n+2}
\det\begin{pmatrix}
      0&         y&         0&    \cdots&        1&      -2    \\
      0&         0&         y&    \cdots&        -2&     1     \\
 \vdots&    \vdots&    \vdots&          &   \vdots& \vdots \\
      0&         1&        -2&    \cdots&        y&      0   \\
      1&        -2&         1&    \cdots&        0&      y    \\
     -2&         1&         0&    \cdots&        0&      0
\end{pmatrix}.
\]
Now decompose the last row as $$(-2,1,0,\cdots,0,0) = (-1,1,0,\cdots,0,0) + (-1,0,0,\cdots,0,0).$$ This gives
\begin{align}\label{eq-121-1}
&\scalebox{0.95}{$T_2=\underbrace{
(-1)^{n+2}
\det\begin{pmatrix}
      0&         y&         0&    \cdots&        1&      -2    \\
      0&         0&         y&    \cdots&        -2&     1     \\
 \vdots&    \vdots&    \vdots&          &   \vdots& \vdots \\
      0&         1&        -2&    \cdots&        y&      0   \\
      1&        -2&         1&    \cdots&        0&      y    \\
     -1&         1&         0&    \cdots&        0&      0
\end{pmatrix}}_{T_{2a}}
+\underbrace{(-1)^{n+2}\det\begin{pmatrix}
      0&         y&         0&    \cdots&        1&      -2    \\
      0&         0&         y&    \cdots&        -2&     1     \\
 \vdots&    \vdots&    \vdots&          &   \vdots& \vdots \\
      0&         1&        -2&    \cdots&        y&      0   \\
      1&        -2&         1&    \cdots&        0&      y    \\
     -1&         0&         0&    \cdots&        0&      0
\end{pmatrix}}_{T_{2b}}$}\\
&=-f_{A_{n-2}}(y)-f_{D_{n-2}}(y).\nonumber
\end{align}
To evaluate the two determinants in \eqref{eq-121-1}, we proceed as follows.

For the first determinant, we first add the last row to the penultimate row to eliminate the entry at the $(n-2,1)$ position. Expanding the resulting determinant along its first column (where only the $(n-1,1)$ entry is now non-zero) and taking the transpose of the remaining submatrix, we recover the standard form of the characteristic polynomial for type $A$. This yields
\[
T_{2a}=(-1)^{n+2}\cdot\left[(-1)\cdot(-1)^{(n-1)+1}\right]f_{A_{n-2}}(y)=-f_{A_{n-2}}(y).
\]
For the second determinant, expanding directly along the last row--which contains only a single non-zero entry -1 at position $(n-1,1)$--isolates a submatrix whose structure corresponds to type $D_{n-2}$. This gives
\[
T_{2b}=(-1)^{n+2}\cdot\left[(-1)\cdot(-1)^{(n-1)+1}\right]f_{D_{n-2}}(y)=-f_{D_{n-2}}(y).
\]

By combining the expressions for $T_1$ and $T_2$, we obtain the desired decomposition
\[
f_{D_n}(y) = f_{A_n}(y) -f_{A_{n-2}}(y)-f_{D_{n-2}}(y).
\]
This recurrence relation completes the proof of the stated identity.
\end{proof}

Recall the definition of the matrix $B_n$:
\[
(B_n)_{ij}=
\begin{cases}
n+2-i-j, & \text{if }i+j \leq n+1,\\
0, & \text{otherwise},
\end{cases}
\]
and that its inverse (stated without proof) has the following explicit form:
\[
(B_n^{-1})_{ij}=\begin{cases}
 1,&   \ i+j=n+1,\\
-2,&   \ i+j=n+2,\\
 1,&   \ i+j=n+3,\\
 0,&\text{otherwise}.
\end{cases}
\]

We now establish a recurrence relation between the characteristic polynomials of  $B_n^{-1}$ and $D_n$.
\begin{lem}\label{lem-CD1}
For any integer $n\ge 3$,
\begin{equation}\label{eq-fBD}
f_{B_n^{-1}}(y)=(-1)^{n-1}yf_{D_{n-1}}(-y)-f_{B_{n-2}^{-1}}(y).
\end{equation}
\end{lem}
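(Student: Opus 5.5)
Let $M_n:=B_n^{-1}$. It is an anti-banded matrix: $1$ on the anti-diagonal $i+j=n+1$, $-2$ on $i+j=n+2$, $1$ on $i+j=n+3$. So its first row is $(0,\dots,0,1)$, its first column is $(0,\dots,0,1)^\top$, and after deleting the first row and column the matrix of order $n-1$ is the anti-tridiagonal pattern $(1,-2,1)$ with a $-2$ in the corner entry $(1,n-1)$, i.e.\ exactly $-D_{n-1}$. I will prove the identity by a two-step cofactor expansion of $f_{M_n}(y)=\det(yI_n-M_n)$, in the same spirit as the proof of Lemma \ref{lem-CD2}.

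\textbf{Step 1 (expand along the first row).} The first row of $yI_n-M_n$ is $(y,0,\dots,0,-1)$, so the expansion has only two terms. The $(1,1)$ cofactor is $y\det(yI_{n-1}+D_{n-1})$. Since $D_{n-1}$ has order $n-1$, we have $\det(yI_{n-1}+D_{n-1})=(-1)^{n-1}\det(-yI_{n-1}-D_{n-1})=(-1)^{n-1}f_{D_{n-1}}(-y)$. This produces the term $(-1)^{n-1}yf_{D_{n-1}}(-y)$.

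\textbf{Step 2 (the $(1,n)$ term).} This term is $(-1)\cdot(-1)^{1+n}$ times the minor obtained by deleting row $1$ and column $n$. In that minor, the first column (the original column $1$ without its top entry) is $(0,\dots,0,-1)^\top$, because the only nonzero entry of column $1$ of $M_n$ besides the diagonal position is $1$ at $(n,1)$. Expanding along this column leaves the submatrix of $yI_n-M_n$ with rows $2,\dots,n-1$ and columns $2,\dots,n-1$. Its pattern is as follows:
\begin{itemize}
\item the anti-diagonal entries come from $i+j=n+2$ in the original indexing, which are $-2$, and become $+2$ after the minus sign;
\item the neighbouring anti-diagonals carry $-1$;
\item the diagonal carries $y$.
\end{itemize}
Under the shift $i\mapsto i-1$, $j\mapsto j-1$, this is precisely $yI_{n-2}-M_{n-2}$, whose determinant is $f_{B_{n-2}^{-1}}(y)$. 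I then collect the signs: $(-1)(-1)^{1+n}$ from the first expansion, and $(-1)(-1)^{(n-1)+1}$ from expanding the $-1$ at position $(n-1,1)$ of the minor. Their product is $(-1)^{2n+3}=-1$, which gives the term $-f_{B_{n-2}^{-1}}(y)$ and completes the identity.

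\textbf{Main obstacle.} The delicate part is the boundary bookkeeping. In Step 1 I must check that deleting row and column $1$ of $M_n$ really gives $-D_{n-1}$. The entry at $(1,n-1)$ of the minor comes from the original $(2,n)$, where $i+j=n+2$, so it is $-2$ in $M_n$. The remaining entries follow the $1,-2,1$ bands, and the bottom-left corner matches the $2,-1$ corner of $D_{n-1}$ after the sign flip. In Step 2 I must verify that the central block keeps the full three bands with the correct truncation at both ends; it should, because both ends come from interior rows of $M_n$. Finally, I would sanity-check the formula at $n=3$ against a direct computation, since the sign $(-1)^{n-1}$ is the easiest thing to get wrong.
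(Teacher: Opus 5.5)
Your proof is correct and takes the same route as the paper. You expand $\det(yI_n-B_n^{-1})$ along its first row $(y,0,\dots,0,-1)$, identify the $(1,1)$ cofactor with $(-1)^{n-1}y\,f_{D_{n-1}}(-y)$, and reduce the $(1,n)$ cofactor to $-f_{B_{n-2}^{-1}}(y)$. Your explicit second expansion along the column $(0,\dots,0,-1)^\top$ and the sign count $(-1)^{2n+3}=-1$ also supply the step the paper only sketches as ``expanding it further or recognizing its block structure''.
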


\begin{proof}
By definition, the characteristic polynomial of $B^{-1}_n$ is
\begin{equation*}
f_{B_n^{-1}}(y)
=\det(yI_n-B_{n}^{-1})
=\det
\begin{pmatrix}
      y&         0&         0&    \cdots&        0&      0&    -1\\
      0&         y&         0&    \cdots&        0&      -1&    2\\
      0&         0&         y&    \cdots&        -1&     2&    -1\\
 \vdots&    \vdots&    \vdots&          &   \vdots& \vdots& \vdots\\
      0&         0&        -1&    \cdots&        y&      0&    0\\
      0&        -1&         2&    \cdots&        0&      y&    0\\
     -1&         2&        -1&    \cdots&        0&      0&    y
\end{pmatrix}.
\end{equation*}
Expanding the determinant along the first row, we obtain
\begin{equation*}
y
\det
\begin{pmatrix}
              y&         0&    \cdots&        0&      -1&    2\\
              0&         y&    \cdots&        -1&     2&    -1\\
         \vdots&    \vdots&          &   \vdots& \vdots& \vdots\\
              0&        -1&    \cdots&        y&      0&    0\\
             -1&         2&    \cdots&        0&      y&    0\\
              2&        -1&    \cdots&        0&      0&    y
\end{pmatrix}
-
\det
\begin{pmatrix}
              y&         0&    \cdots&        0&      -1\\
              0&         y&    \cdots&       -1&       2\\
         \vdots&    \vdots&          &   \vdots&  \vdots\\
              0&        -1&    \cdots&        y&       0\\
             -1&         2&    \cdots&        0&       y
\end{pmatrix}.
\end{equation*}
 For the first determinant, factoring out -1 from each of the n-1 rows introduces a scalar factor of $(-1)^{n-1}$. After re-indexing the rows and columns, the resulting matrix corresponds exactly to $-yI_{n-1}-D_{n-1}$. Consequently, we have
\[
\text{the first determinant} = (-1)^{n-1} \det\bigl(-yI_{n-1} - D_{n-1}\bigr)
= (-1)^{n-1} f_{D_{n-1}}(-y).
\]
Regarding the second determinant, we observe that the submatrix preserves the same tridiagonal-like structure as $B^{-1}$ but with a reduced dimension. Specifically, expanding it further or recognizing its block structure identifies it as $f_{B_{n-2}^{-1}}(y)$.

Substituting these terms back into the expansion, we arrive at
\[
f_{B_n^{-1}}(y)=(-1)^{n-1}y\,f_{D_{n-1}}(-y)-f_{B_{n-2}^{-1}}(y),
\]
which completes the proof.
\end{proof}

We also establish the following identity between the characteristic polynomials of $B_n^{-1}$ and $A_n$.

\begin{prop}
For any positive integer $n$, the characteristic polynomials of $B_n^{-1}$ and $A_n$ satisfy
\begin{equation}\label{eq-fBnIn-fCn}
f_{B_n^{-1}}(y)=f_{A_n}(y).
\end{equation}
\end{prop}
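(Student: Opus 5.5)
Write $J_n$ for the $n\times n$ reversal (anti-identity) matrix, $(J_n)_{ij}=1$ if $i+j=n+1$ and $0$ otherwise. The identity $f_{B_n^{-1}}(y)=f_{A_n}(y)$ would follow at once from similarity, but the two matrices are not obviously similar. The plan is therefore a direct comparison of entries, followed by a reduction to a statement about how the characteristic polynomial of an anti-banded matrix depends on its corner entries.

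\emph{Step 1: compare the entries.} $B_n^{-1}$ has $1$ on the antidiagonal $i+j=n+1$, $-2$ on $i+j=n+2$, and $1$ on $i+j=n+3$. In particular its $(1,n)$ entry is $1$ and its first row is $(0,\dots,0,1)$. The matrix $A_n$ has $2$ on $i+j=n+1$ and $-1$ on $i+j=n$ and $i+j=n+2$, with the corner entry $(1,n)$ equal to $1$. Hence $J_nB_n^{-1}$ is lower-triangular-banded while $J_nA_n$ is tridiagonal. These are not similar in any obvious way, so I would not look for an explicit conjugation. Instead I would prove that both characteristic polynomials satisfy the same recurrence with the same initial values.

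\emph{Step 2: a recurrence for $f_{A_n}$.} Lemma \ref{lem-CD1} already gives $f_{B_n^{-1}}=(-1)^{n-1}yf_{D_{n-1}}(-y)-f_{B_{n-2}^{-1}}$. The goal of this step is to derive
\[
f_{A_n}(y)=(-1)^{n-1}y\,f_{D_{n-1}}(-y)-f_{A_{n-2}}(y),\qquad n\ge3.
\]
To do this, expand $\det(yI_n-A_n)$ in the same way as in the proofs of Lemmas \ref{lem-CD2} and \ref{lem-CD1}. The first row of $yI_n-A_n$ is $(y,0,\dots,0,1,-1)$. Expansion along this row, or along the last column, splits the determinant into a $y$-term whose minor, after factoring $-1$ from every row and reindexing, should be $\pm f_{D_{n-1}}(-y)$, plus correction terms. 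The correction terms should be recognised as $f_{A_{n-2}}$ using the row-splitting trick of Lemma \ref{lem-CD2}. Specifically, write the row containing $(-2,1,0,\dots)$ as a sum of two rows, and cancel the terms that produce $f_{D_{n-2}}$ by means of identity \eqref{eq-fCn-fDn1}.

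\emph{Step 3: induction.} Check $n=1$ and $n=2$ directly. For $n=1$, $A_1=(1)$ and $B_1^{-1}=(1)$ (corner entry). For $n=2$, $B_2^{-1}=\begin{pmatrix}0&1\\1&-2\end{pmatrix}$ and $A_2=\begin{pmatrix}-1&1\\1&0\end{pmatrix}$ under the stated boundary conventions, and both have characteristic polynomial $y^2+y-1$ resp.\ $y^2+2y-1$. Whichever conventions apply, these small cases will be verified by hand. Strong induction then gives $f_{B_n^{-1}}=f_{A_n}$, since the right-hand sides of the two recurrences coincide.

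The main obstacle is Step 2: recognising the minors correctly, with the right signs, as $D_{n-1}$ evaluated at $-y$ and as $A_{n-2}$. The corner entries of $A_n$ differ from those of $D_n$, and this difference is what produces the extra terms. If the direct expansion becomes messy, my fallback is to write $A_n=D_n-E_{1n}$. Since $f_{A_n}=f_{D_n}+\operatorname{cof}_{1n}$, the task then reduces to identifying a single cofactor, which should be $\pm f_{B_{n-2}^{-1}}$-like. Combining this with \eqref{eq-fCn-fDn1} and Lemma \ref{lem-CD1} should close the induction.
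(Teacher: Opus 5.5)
\textbf{There is a genuine gap.} Your argument depends entirely on Step 2, which you do not carry out, and the sketch you give for it cannot work.

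In the first row (or last row or last column) of $yI_n-A_n$, the variable $y$ sits only in a diagonal corner. So the $y$-term of the expansion is $y$ times a principal minor $\det(yI_{n-1}-M)$, where $M$ is $A_n$ with index $1$ (resp.\ $n$) deleted. In both cases $M$ equals $-B_{n-1}^{-1}$ up to conjugation by $J_{n-1}$. Hence the $y$-term is $(-1)^{n-1}y\,f_{B_{n-1}^{-1}}(-y)$, not $(-1)^{n-1}y\,f_{D_{n-1}}(-y)$. These differ: already $f_{B_3^{-1}}=y^3-2y^2-4y+1$ while $f_{D_3}=y^3-2y^2-6y+4$. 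The remaining terms would therefore have to make up the difference $(-1)^{n-1}y\bigl[f_{B_{n-1}^{-1}}(-y)-f_{D_{n-1}}(-y)\bigr]$, and your proposal gives no mechanism for that.

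Your target identity $f_{A_n}+f_{A_{n-2}}=(-1)^{n-1}y\,f_{D_{n-1}}(-y)$ is true. However, the only visible route to it is the proposition itself combined with Lemma \ref{lem-CD1}, so it is as hard as what you want to prove.

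The fallback does not close the loop either. The cofactor in $f_{A_n}=f_{D_n}+\mathrm{cof}_{1n}$ is $f_{A_{n-2}}+f_{D_{n-2}}$, which merely reproduces \eqref{eq-fCn-fDn1}. Combining it with Lemma \ref{lem-CD1} and the induction hypothesis still requires $f_{D_n}+f_{D_{n-2}}+2f_{A_{n-2}}=(-1)^{n-1}y\,f_{D_{n-1}}(-y)$. That is again a consequence of the proposition, not an input you have.

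Your base case is also misread. The last row of $A_n$ is $(2,-1,0,\dots,0)$, so $A_2=\begin{pmatrix}-1&1\\2&-1\end{pmatrix}$ and $f_{A_2}=y^2+2y-1=f_{B_2^{-1}}$. The mismatch you found and left unresolved came from the wrong $A_2$.

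\textbf{The missing idea} is the one you set aside in Step 1: the two matrices are explicitly similar. Let $N$ be the upper shift ($N_{i,i+1}=1$) and $U=I_n-N$. Then $J_nB_n^{-1}=U^2$, and $J_nA_n=UU^{\top}$, the tridiagonal $(-1,2,-1)$ matrix with last diagonal entry $1$. Also $J_nUJ_n=U^{\top}$, i.e.\ $U^{\top}J_n=J_nU$. Therefore
\[
U^{\top}A_n=U^{\top}J_n\,UU^{\top}=J_nU^{2}\,U^{\top}=B_n^{-1}U^{\top},
\]
so $A_n=(U^{\top})^{-1}B_n^{-1}U^{\top}$ for every $n\ge 1$. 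No recurrence and no base cases are needed.

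This is what the paper does, in inverted form. It reduces to $\det(yI_n-A_n^{-1})=\det(yI_n-B_n)$. It then applies the row operations $R_i\leftarrow R_i-R_{i-1}$ (left multiplication by $U^{\top}$) followed by the column operations $C_j\leftarrow\sum_{k\ge j}C_k$ (right multiplication by $(U^{\top})^{-1}$). Together these are exactly the conjugation $U^{\top}A_n^{-1}(U^{\top})^{-1}=B_n$.
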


\begin{proof}
By definition, it suffices to show that
\[
\det\bigl(yI_n-B_n^{-1}\bigr)=\det\bigl(yI_n-A_n\bigr).
\]
Since the spectra of inverse matrices are simply the reciprocals of the original eigenvalues, this equality is equivalent to the dual identity
\[
\det\bigl(yI_n-A_n^{-1}\bigr)=\det\bigl(yI_n-B_n\bigr).
\]
This equivalence is further supported by the fact that $\det(B_n^{-1})=\det(A_n)=(-1)^{\frac{n(n-1)}{2}}$.
We proceed via a sequence of unimodular (determinant-preserving) transformations.
First, the inverse matrix $A_n^{-1}$ has the explicit structured form
\[
A_n^{-1}=
\begin{pmatrix}
     1 &  1 &  1 & \cdots &  1 &  1 & 1 \\
     2 &  2 &  2 & \cdots &  2 &  2 & 1 \\
     3 &  3 &  3 & \cdots &  3 &  2 & 1 \\
     4 &  4 &  4 & \cdots &  3 &  2 & 1 \\
 \vdots & \vdots & \vdots & \ddots & \vdots & \vdots & \vdots \\
   n-2 & n-2 & n-2 & \cdots &  3 &  2 & 1 \\
   n-1 & n-1 & n-2 & \cdots &  3 &  2 & 1 \\
     n & n-1 & n-2 & \cdots &  3 &  2 & 1
\end{pmatrix},
\]
where the lower-right region exhibits a gradient-like decay towards the bottom-right corner.

To simplify the characteristic matrix $yI_n-A_n^{-1}$, we apply a sequence of elementary row operations
$R_i \leftarrow R_i-R_{i-1}$ for $i$ descending from $n$ to $2$. This transformation yields
\[
\det(yI_n-A_n^{-1})=
\det
\begin{pmatrix}
   y-1 &    -1 &    -1 & \cdots &    -1 &    -1 &  -1 \\
  -y-1 &   y-1 &    -1 & \cdots &    -1 &    -1 &   0 \\
     -1 &  -y-1 &   y-1 & \cdots &    -1 &     0 &   0 \\
     -1 &    -1 &  -y-1 & \cdots &     0 &     0 &   0 \\
 \vdots & \vdots & \vdots & \ddots & \vdots & \vdots & \vdots \\
     -1 &    -1 &     -1 & \cdots &   y &     0 &   0 \\
     -1 &    -1 &     0 & \cdots &    -y &     y &   0 \\
     -1 &     0 &     0 & \cdots &     0 &    -y &   y
\end{pmatrix}.
\]
Next, we perform the column operations $C_j \leftarrow \sum_{k=j}^n C_k$ for $j=1,\dots,n$. These operations preserve the determinant and transform the matrix into a form where the variable $y$ is isolated along the main diagonal, except for the upper-left blocks. After simplification, the matrix coincides exactly with the structure of $yI_n-B_n$
\[
\det\bigl(yI_n-A_n^{-1}\bigr)=
\det
\begin{pmatrix}
   y-n   & -n+1 & -n+2 & \cdots &   -3 &   -2 & -1 \\
  -n+1   & y-n-2 & -n+3 & \cdots &   -2 &   -1 &  0 \\
  -n+2   & -n+3 & y-n-4 & \cdots &   -1 &    0 &  0 \\
  -n+3   & -n+4 & -n-5 & \cdots &    0 &    0 &  0 \\
 \vdots & \vdots & \vdots & \ddots & \vdots & \vdots & \vdots \\
      -3 &    -2 &    -1 & \cdots &    y &    0 &  0 \\
      -2 &    -1 &     0 & \cdots &    0 &    y &  0 \\
      -1 &     0 &     0 & \cdots &    0 &    0 &  y
\end{pmatrix}
= \det\bigl(yI_n-B_n\bigr).
\]
The final equality follows from the matching of entries with the standard representation of the $B_n$ type matrix. Thus, $f_{B_n^{-1}}(y)=f_{A_n}(y)$, completing the proof.
\end{proof}

Combining Equations \eqref{eq-fCn-fDn1} and \eqref{eq-fBnIn-fCn}, we obtain the following result.

\begin{lem}\label{lem-CD2}
For any positive integer $n\ge5$,
\begin{equation}\label{eq-fCn-fDn}
f_{B_n^{-1}}(y)-f_{B^{-1}_{n-2}}(y)=f_{D_{n}}(y)+f_{D_{n-2}}(y).
\end{equation}
\end{lem}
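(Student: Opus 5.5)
The identity follows by combining the two preceding results. First, Lemma \ref{lem-CD2} in its earlier form \eqref{eq-fCn-fDn1} holds for every $n\ge 3$:
\[
f_{A_n}(y)-f_{A_{n-2}}(y)=f_{D_n}(y)+f_{D_{n-2}}(y).
\]
Second, the proposition \eqref{eq-fBnIn-fCn} gives $f_{B_k^{-1}}(y)=f_{A_k}(y)$ for every positive integer $k$. I would apply it twice, with $k=n$ and with $k=n-2$, and substitute both into the left-hand side of \eqref{eq-fCn-fDn1}. This turns $f_{A_n}(y)-f_{A_{n-2}}(y)$ into $f_{B_n^{-1}}(y)-f_{B_{n-2}^{-1}}(y)$ and leaves the right-hand side unchanged, which is exactly \eqref{eq-fCn-fDn}.

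The only point to check is the range of $n$. The proposition is stated for all positive $n$, so it applies to $n-2$ as soon as $n\ge 3$. The restriction $n\ge5$ in the statement is therefore stronger than needed. It presumably comes from wanting both $A_{n-2}$ and $D_{n-2}$ to be large enough ($n-2\ge 3$) for the displayed band pattern to be unambiguous. Under the hypothesis $n\ge 5$ every matrix involved has order at least $3$, so both earlier results apply with no degenerate cases.

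There is no real obstacle: the argument is a direct substitution. The only care needed is that the two earlier identities are being used at orders $n$ and $n-2$ simultaneously, and that the sign conventions for $f_M(y)=\det(yI-M)$ agree in both, which they do since both are stated with the same definition.
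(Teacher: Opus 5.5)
Your proposal is correct and is exactly the paper's argument. The paper obtains the lemma in one line, by substituting $f_{B_k^{-1}}(y)=f_{A_k}(y)$ at $k=n$ and $k=n-2$ into $f_{A_n}(y)-f_{A_{n-2}}(y)=f_{D_n}(y)+f_{D_{n-2}}(y)$. Your side remark that $n\ge 3$ already suffices is also right, provided the degenerate order-one matrices are read as $A_1=(1)$ and $D_1=(2)$; with that convention the identity does hold at $n=3$.
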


Building on Lemmas \ref{lem-CD1} and \ref{lem-CD2}, we derive an explicit expression for $f_{B_n^{-1}}(y)$ in terms of $f_{D_n}(y)$.

\begin{cor}
For any positive integer $n\ge5$,
\begin{equation}\label{eq-fCn}
f_{B_n^{-1}}(y)=\frac{f_{D_{n}}(y)+(-1)^{n-1}y\,f_{D_{n-1}}(-y)+f_{D_{n-2}}(y)}{2}.
\end{equation}
\end{cor}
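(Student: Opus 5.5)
This identity comes from eliminating $f_{B_{n-2}^{-1}}(y)$ between the two recurrences already established. Lemma \ref{lem-CD1} gives
\[
f_{B_n^{-1}}(y)+f_{B_{n-2}^{-1}}(y)=(-1)^{n-1}y\,f_{D_{n-1}}(-y),
\]
valid for $n\ge 3$, hence for $n\ge5$. Lemma \ref{lem-CD2}, in the form \eqref{eq-fCn-fDn} (which rests on \eqref{eq-fCn-fDn1} together with \eqref{eq-fBnIn-fCn}), gives
\[
f_{B_n^{-1}}(y)-f_{B_{n-2}^{-1}}(y)=f_{D_{n}}(y)+f_{D_{n-2}}(y),
\]
valid for $n\ge5$.

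Both identities hold for the same $n$. Adding them cancels $f_{B_{n-2}^{-1}}(y)$ and leaves
\[
2f_{B_n^{-1}}(y)=f_{D_{n}}(y)+(-1)^{n-1}y\,f_{D_{n-1}}(-y)+f_{D_{n-2}}(y).
\]
Dividing by $2$ gives \eqref{eq-fCn}. All quantities are polynomials with integer coefficients, and we work over $\mathbb{Q}[y]$, so the division is legitimate.

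There is no real obstacle at this stage; the work lies in the two input lemmas. The only thing to check is the index range. Lemma \ref{lem-CD1} requires $n\ge3$, so that $B_{n-2}^{-1}$ is defined. Lemma \ref{lem-CD2} as stated requires $n\ge5$, and this inherits the range on which \eqref{eq-fCn-fDn1} and \eqref{eq-fBnIn-fCn} were applied with both $n$ and $n-2$. The intersection of these ranges is $n\ge5$, exactly the hypothesis of the corollary.

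As a sanity check, I would compare degrees and leading coefficients of the two sides. On the right, $f_{D_n}$ is monic of degree $n$. The term $(-1)^{n-1}y\,f_{D_{n-1}}(-y)$ has leading term $(-1)^{n-1}y\,(-y)^{n-1}=y^n$, so it is also monic of degree $n$. The term $f_{D_{n-2}}$ has lower degree. Their sum divided by $2$ is therefore monic of degree $n$, matching $f_{B_n^{-1}}$.
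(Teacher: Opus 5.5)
Your proposal is correct and is exactly the paper's argument: adding \eqref{eq-fBD} from Lemma~\ref{lem-CD1} to \eqref{eq-fCn-fDn} cancels $f_{B_{n-2}^{-1}}(y)$, and dividing by $2$ gives the corollary. One small correction to your discussion of the index range: \eqref{eq-fCn-fDn1} holds for $n\ge3$ and \eqref{eq-fBnIn-fCn} holds for all $n\ge1$, so the combined identity \eqref{eq-fCn-fDn}, and hence the corollary, already holds for $n\ge3$; the bound $n\ge5$ is therefore not forced by applying those results at $n$ and $n-2$, though this does not affect your proof for $n\ge5$.
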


Substituting \eqref{eq-fCn} into \eqref{eq-fCn-fDn} and simplifying yields the following recurrence for $f_{D_n}(y)$.

\begin{cor}
For any positive integer $n\ge5$,
\begin{equation}\label{eq-fD-rec}
f_{D_n}(y)=(-1)^{n-1}y\bigl[f_{D_{n-1}}(-y)-f_{D_{n-3}}(-y)\bigr]
          -2f_{D_{n-2}}(y)-f_{D_{n-4}}(y).
\end{equation}
\end{cor}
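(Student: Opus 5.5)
The plan is to eliminate the characteristic polynomials of $B_n^{-1}$ between the two relations already available. Write $g_n := f_{B_n^{-1}}(y)$. Lemma \ref{lem-CD1} gives the ``sum'' relation
\[
g_n + g_{n-2} = (-1)^{n-1} y\, f_{D_{n-1}}(-y),
\]
and Lemma \ref{lem-CD2} (equation \eqref{eq-fCn-fDn}) gives the ``difference'' relation
\[
g_n - g_{n-2} = f_{D_n}(y) + f_{D_{n-2}}(y).
\]

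Solving this $2\times 2$ linear system gives $g_n$ as half the sum of the right-hand sides, which is exactly \eqref{eq-fCn}. It also gives $g_{n-2}$ as half their difference:
\[
g_{n-2} = \tfrac12\Bigl[(-1)^{n-1} y f_{D_{n-1}}(-y) - f_{D_n}(y) - f_{D_{n-2}}(y)\Bigr].
\]

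Next, apply \eqref{eq-fCn} with $n$ replaced by $n-2$. This gives a second expression
\[
g_{n-2} = \tfrac12\Bigl[f_{D_{n-2}}(y) + (-1)^{n-3} y f_{D_{n-3}}(-y) + f_{D_{n-4}}(y)\Bigr],
\]
and we use $(-1)^{n-3}=(-1)^{n-1}$. Equating the two expressions for $g_{n-2}$ and multiplying by $2$ yields
\[
(-1)^{n-1} y f_{D_{n-1}}(-y) - f_{D_n}(y) - f_{D_{n-2}}(y) = f_{D_{n-2}}(y) + (-1)^{n-1} y f_{D_{n-3}}(-y) + f_{D_{n-4}}(y).
\]
Solving for $f_{D_n}(y)$ gives precisely \eqref{eq-fD-rec}.

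The only real obstacle is the range of indices. Using \eqref{eq-fCn} at $n-2$ as stated would require $n\ge 7$, so we will not quote that corollary there. Instead we rederive it at index $n-2$ from its ingredients, which hold in a wider range: equation \eqref{eq-fCn-fDn1} holds for $n\ge3$, the identity \eqref{eq-fBnIn-fCn} holds for all $n$, and Lemma \ref{lem-CD1} holds for $n\ge3$. Together these give both the sum and difference relations for every index $\ge 3$. For $n\ge5$, every index used (namely $n$ and $n-2\ge3$) is therefore admissible, and the smallest $D$-index appearing, $n-4\ge1$, is meaningful. This establishes the recurrence for all $n\ge5$.
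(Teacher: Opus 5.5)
Your proof is correct and is essentially the paper's argument. The paper also eliminates $f_{B^{-1}}$ between the sum relation of Lemma \ref{lem-CD1} and the difference relation \eqref{eq-fCn-fDn}, by substituting \eqref{eq-fCn} at indices $n$ and $n-2$ into \eqref{eq-fCn-fDn}; equating your two expressions for $f_{B_{n-2}^{-1}}(y)$ is the same linear elimination. Your check of the index range is a genuine improvement: the paper states \eqref{eq-fCn} only for $n\ge5$, so its use at $n-2$ literally covers only $n\ge7$, and your observation that \eqref{eq-fCn-fDn1}, \eqref{eq-fBnIn-fCn} and Lemma \ref{lem-CD1} give both relations at every index $\ge3$ closes the cases $n=5,6$. (Also, rename your $g_n$, since the paper already uses $g_n$ for $\mathbf{u}_n^\top(I_n-yB_n)^*\mathbf{u}_n$.)
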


Analogously, we can derive a recurrence that involves only the polynomials $f_{B_n^{-1}}(y)$.

\begin{lem}\label{lem-fB-rec}
For any positive integer $n\ge5$,
\begin{equation}\label{eq-fB-r}
f_{B_n^{-1}}(y)=(-1)^{n-1}y\bigl[f_{B_{n-1}^{-1}}(-y)-f_{B^{-1}_{n-3}}(-y)\bigr]
               -2f_{B_{n-2}^{-1}}(y)-f_{B^{-1}_{n-4}}(y).
\end{equation}
\end{lem}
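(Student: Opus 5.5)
The plan is to eliminate the $D$-polynomials between the two relations we already have for $f_{B_n^{-1}}$, namely Lemma \ref{lem-CD1}, equation \eqref{eq-fBD}, and equation \eqref{eq-fCn-fDn}. Then we rewrite what remains purely in terms of $f_{B_m^{-1}}$. Throughout write $f_m(y):=f_{B_m^{-1}}(y)$ and set
\[
g_n(y):=f_n(y)+f_{n-2}(y).
\]
With this notation, \eqref{eq-fBD} reads $g_n(y)=(-1)^{n-1}y\,f_{D_{n-1}}(-y)$.

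\textbf{Step 1.} Invert \eqref{eq-fBD} to express each $D$-polynomial through $B^{-1}$-polynomials. Replacing $n$ by $m+1$ and $y$ by $-y$ gives
\[
f_{D_m}(y)=\frac{(-1)^{m+1}}{y}\,g_{m+1}(-y)
=\frac{(-1)^{m+1}}{y}\bigl[f_{m+1}(-y)+f_{m-1}(-y)\bigr],
\]
valid as long as $m+1\ge3$.

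\textbf{Step 2.} Substitute this expression into \eqref{eq-fCn-fDn}, taken at index $n-1$:
\[
f_{n-1}-f_{n-3}=f_{D_{n-1}}+f_{D_{n-3}}.
\]
The right-hand side becomes
\[
\frac{(-1)^n}{y}\bigl[g_n(-y)+g_{n-2}(-y)\bigr]
=\frac{(-1)^n}{y}\bigl[f_n(-y)+2f_{n-2}(-y)+f_{n-4}(-y)\bigr].
\]
Multiplying by $(-1)^n y$ and then replacing $y$ by $-y$ (so $y\mapsto -y$ flips the prefactor sign) gives
\[
f_n(y)=(-1)^{n-1}y\bigl[f_{n-1}(-y)-f_{n-3}(-y)\bigr]-2f_{n-2}(y)-f_{n-4}(y),
\]
which is exactly \eqref{eq-fB-r}. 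Here the $+2f_{n-2}$ arises precisely because the two $g$'s in \eqref{eq-fCn-fDn} overlap in $f_{n-2}$. As a cross-check, the same identity should follow by substituting the formula of Step 1 into the recurrence \eqref{eq-fD-rec} for $f_{D_n}$; the two routes must agree.

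\textbf{Step 3 (main obstacle).} The real difficulty is bookkeeping rather than ideas. One must track the sign $(-1)^{m+1}$ together with the substitution $y\mapsto -y$, since $(-y)$ and $(-1)^n$ interact. One must also confirm the index range: Step 1 needs $m+1\ge3$, and \eqref{eq-fCn-fDn} at index $n-1$ needs $n-1\ge5$. Any boundary gap should be filled by directly checking small cases from the explicit matrices $B_m^{-1}$ for $m\le 5$, e.g. $f_1=y-1$ and $f_2=y^2+2y-1$. If a sign slip appears (for instance the $f_{n-2}$ coefficient coming out as $+2$ instead of $-2$), I would first recheck against these explicit small characteristic polynomials. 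The sign conventions of Lemma \ref{lem-CD1} are the most likely source of error, so I would treat them as the point to verify numerically.
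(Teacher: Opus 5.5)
Your proof is correct and is essentially the paper's argument. The paper adds the instances of \eqref{eq-fBD} at indices $n-1$ and $n+1$, replaces the resulting $f_{D_n}(-y)+f_{D_{n-2}}(-y)$ via \eqref{eq-fCn-fDn} with $y\mapsto -y$, and then shifts $n\mapsto n-1$; this is the same linear elimination you perform by solving \eqref{eq-fBD} for $f_{D_m}$ and substituting into \eqref{eq-fCn-fDn} at index $n-1$. The $n=5$ boundary case you flag needs no numerical check: \eqref{eq-fCn-fDn} is just \eqref{eq-fCn-fDn1} combined with \eqref{eq-fBnIn-fCn}, so it holds for all $n\ge 3$, and the paper's own proof silently uses it at index $4$ as well.
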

\begin{proof}
From the relation \eqref{eq-fBD}, we have the following two identities
\begin{align}
f_{B_{n-1}^{-1}}(y)&=(-1)^{n-2}y\,f_{D_{n-2}}(-y)-f_{B_{n-3}^{-1}}(y),\label{eq-fBD-1}\\[2pt]
f_{B_{n+1}^{-1}}(y)&=(-1)^{n}y\,f_{D_{n}}(-y)-f_{B_{n-1}^{-1}}(y).\label{eq-fBD-2}
\end{align}
Summing \eqref{eq-fBD-1} and \eqref{eq-fBD-2} yields
\[
f_{B_{n-1}^{-1}}(y)+f_{B_{n+1}^{-1}}(y)
   =(-1)^{n}y\bigl(f_{D_{n-2}}(-y)+f_{D_{n}}(-y)\bigr)
    -f_{B_{n-3}^{-1}}(y)-f_{B_{n-1}^{-1}}(y).
\]
By employing \eqref{eq-fCn-fDn} (with $y$ replaced by $-y$) to substitute for the term $f_{D_{n-2}}(-y)+f_{D_{n}}(-y)$, we obtain
\[
f_{B_{n+1}^{-1}}(y)
   =(-1)^{n}y\bigl[f_{B_{n}^{-1}}(-y)-f_{B^{-1}_{n-2}}(-y)\bigr]
    -2f_{B_{n-1}^{-1}}(y)-f_{B^{-1}_{n-3}}(y).
\]
Finally, shifting the index $n\mapsto n-1$ leads to the desired recurrence \eqref{eq-fB-r}.
\end{proof}

Finally, we present an identity that connects $f_{B_n^{-1}}(y)$ directly to the matrix $B_n$ itself.

\begin{lem}\label{lem-fB-det}
For any positive integer $n$,
\begin{equation}\label{eq-fB2}
f_{B_n^{-1}}(y)=(-1)^{\frac{n(n+1)}{2}}\det\bigl(I_n-yB_n\bigr).
\end{equation}
\end{lem}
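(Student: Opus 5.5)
The plan is to factor $B_n$ out of $yI_n-B_n^{-1}$ and reduce everything to a single determinant, namely $\det B_n$. For any invertible matrix,
\[
yI_n-B_n^{-1}=B_n^{-1}\bigl(yB_n-I_n\bigr)=-B_n^{-1}\bigl(I_n-yB_n\bigr).
\]
Taking determinants gives
\[
f_{B_n^{-1}}(y)=(-1)^n\det(B_n)^{-1}\det(I_n-yB_n).
\]
So \eqref{eq-fB2} is equivalent to the identity
\[
(-1)^n\det(B_n)^{-1}=(-1)^{\frac{n(n+1)}{2}},
\]
that is, $\det B_n=(-1)^{\frac{n(n-1)}{2}}$. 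This uses $(-1)^{n}\cdot(-1)^{-n(n+1)/2}=(-1)^{n-n(n+1)/2}=(-1)^{-n(n-1)/2}$.

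The main step is therefore to compute $\det B_n$. I would do this with row differences. Apply $R_i\leftarrow R_i-R_{i+1}$ for $i=1,\dots,n-1$, in increasing order, so that each step uses the original next row. Entry $(i,j)$ of $B_n$ is $\max(n+2-i-j,0)$. After the operations, row $i$ with $i<n$ has entry $1$ when $j\le n+1-i$ and $0$ otherwise. Row $n$ is $(1,0,\dots,0)$.

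Next apply $R_i\leftarrow R_i-R_{i+1}$ again for $i=1,\dots,n-1$. The result is the anti-identity matrix, with a single $1$ in position $(i,n+1-i)$. Its determinant is the sign of the reversal permutation, $(-1)^{\lfloor n/2\rfloor}=(-1)^{n(n-1)/2}$. Both rounds of operations are unimodular, so
\[
\det B_n=(-1)^{\frac{n(n-1)}{2}}.
\]

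This agrees with the value $\det(B_n^{-1})=(-1)^{n(n-1)/2}$ quoted in the proof of \eqref{eq-fBnIn-fCn}. Alternatively, one could read the determinant off the stated inverse formula: $B_n^{-1}$ has anti-diagonal entries $1$ and zeros below the anti-diagonal, so its determinant is the same reversal sign. The only delicate point is bookkeeping the sign parities, and I would check small cases such as $n=1$ ($B_1=(1)$, $f=y-1=-(1-y)$) and $n=2$ to confirm.
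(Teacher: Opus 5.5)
Your proof is correct and is essentially the paper's: both write $yI_n-B_n^{-1}=-B_n^{-1}(I_n-yB_n)$ and combine $\det B_n^{\pm 1}=(-1)^{n(n-1)/2}$ with $n+\tfrac{n(n-1)}{2}=\tfrac{n(n+1)}{2}$. The only difference is that the paper just quotes $\det(B_n^{-1})=\det(A_n)=(-1)^{n(n-1)/2}$ without proof, whereas your row reduction to the anti-identity actually proves it. A shorter route also works: $B_n$ itself vanishes for $i+j\ge n+2$ and has all anti-diagonal entries equal to $1$, so no reduction is needed. In your aside about $B_n^{-1}$, the zeros lie above the anti-diagonal, not below, but this does not affect the determinant.
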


\begin{proof}
Using basic determinant properties and the relation \(\det(B_n^{-1})=\det(A_n)=(-1)^{\frac{n(n-1)}{2}}\),
\[
f_{B_n^{-1}}(y)=\det\!\bigl(yI_n-B_n^{-1}\bigr)
               =\det(B_n^{-1})\det\!\bigl(yB_n-I_n\bigr)
               =(-1)^{\frac{n(n-1)}{2}}(-1)^n\det\!\bigl(I_n-yB_n\bigr),
\]
which simplifies to $(-1)^{\frac{n(n+1)}{2}}\det(I_n-yB_n)$.
\end{proof}

\subsection{Recurrence Relations for $\bar{f}_{B_n}(y)$, $g_n(y)$ and Proofs of Main Theorems}
This subsection is devoted to establishing several key recurrence relations that involve the polynomials $\bar{f}_{B_n}(y)$ and $g_n(y)$. These recurrences play a central role in connecting the matrix families $\{B_n\}$, $\{D_n\}$ and the previously introduced polynomials $\{P_n(y)\}$, $\{Q_n(y)\}$. We first derive recurrences for $\bar{f}_{B_n}(y)$ and $g_n(y)$ individually, then prove a useful identity that links them together. Finally, by verifying that $\bar{f}_{B_n}(y)$ and $g_n(y)$ satisfy the same recurrences as $Q_n(y)$ and $P_n(y)$, and by comparing initial values, we complete the proofs of Theorems \ref{theo-1} and \ref{theo-2}.

Let $\bar{f}_{B_n}(y)=\det(I_n-yB_n)$. The following lemma gives a recurrence for $\bar{f}_{B_n}(y)$.
\begin{lem}\label{lem-fBnpie}
For any integer $n\ge 5$,
\begin{equation}\label{eq-fBn-rec}
\bar{f}_{B_n}(y)=-y\big[\bar{f}_{B_{n-1}}(-y)+\bar{f}_{B_{n-3}}(-y)\big]
                +2\bar{f}_{B_{n-2}}(y)-\bar{f}_{B_{n-4}}(y).
\end{equation}
\end{lem}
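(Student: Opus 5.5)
The plan is to transport the recurrence of Lemma \ref{lem-fB-rec} for $f_{B_n^{-1}}(y)$ to $\bar{f}_{B_n}(y)$ using Lemma \ref{lem-fB-det}, which gives $f_{B_n^{-1}}(y)=\epsilon_n\,\bar{f}_{B_n}(y)$ with $\epsilon_n=(-1)^{n(n+1)/2}$. Two facts about this sign will be used: it is $4$-periodic in $n$, with pattern $\epsilon_1,\epsilon_2,\epsilon_3,\epsilon_4=-1,-1,+1,+1$, and $\epsilon_{n-1}=(-1)^n\epsilon_n$.

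First, substitute $f_{B_k^{-1}}(\pm y)=\epsilon_k\bar{f}_{B_k}(\pm y)$ for $k=n,n-1,n-2,n-3,n-4$ into \eqref{eq-fB-r}. Then divide through by $\epsilon_n$. This gives
\[
\bar{f}_{B_n}(y)=(-1)^{n-1}y\Bigl[\tfrac{\epsilon_{n-1}}{\epsilon_n}\bar{f}_{B_{n-1}}(-y)-\tfrac{\epsilon_{n-3}}{\epsilon_n}\bar{f}_{B_{n-3}}(-y)\Bigr]-2\tfrac{\epsilon_{n-2}}{\epsilon_n}\bar{f}_{B_{n-2}}(y)-\tfrac{\epsilon_{n-4}}{\epsilon_n}\bar{f}_{B_{n-4}}(y).
\]

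Next, evaluate the sign ratios. The exponent difference gives
\[
\frac{n(n+1)}{2}-\frac{(n-2)(n-1)}{2}=2n-1,
\]
so $\epsilon_{n-2}/\epsilon_n=-1$. By periodicity $\epsilon_{n-4}/\epsilon_n=1$. Also $\epsilon_{n-1}/\epsilon_n=(-1)^n$, hence $(-1)^{n-1}\epsilon_{n-1}/\epsilon_n=-1$. Finally,
\[
\frac{\epsilon_{n-3}}{\epsilon_n}=\frac{\epsilon_{n-3}}{\epsilon_{n-1}}\cdot\frac{\epsilon_{n-1}}{\epsilon_n}=(-1)\cdot(-1)^n,
\]
so $-(-1)^{n-1}\epsilon_{n-3}/\epsilon_n=-1$. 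Substituting these values yields exactly
\[
\bar{f}_{B_n}(y)=-y\bigl[\bar{f}_{B_{n-1}}(-y)+\bar{f}_{B_{n-3}}(-y)\bigr]+2\bar{f}_{B_{n-2}}(y)-\bar{f}_{B_{n-4}}(y).
\]
This is \eqref{eq-fBn-rec}, valid for $n\ge5$, the same range as in Lemma \ref{lem-fB-rec}.

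The main obstacle is bookkeeping. The sign coming from $y\mapsto -y$ does not act on $\epsilon_k$, since $\epsilon_k$ is constant in $y$, so the only signs that matter are the $\epsilon$ ratios and the explicit $(-1)^{n-1}$. As a safeguard against a sign error in the earlier lemmas, I would check small cases directly. For instance,
\[
\bar{f}_{B_1}=1-y,\qquad \bar{f}_{B_2}=\det\begin{pmatrix}1-2y&-y\\-y&1\end{pmatrix}=1-2y-y^2,
\]
which match $Q_0$ and $Q_1$. I would then compute $\bar{f}_{B_3},\bar{f}_{B_4},\bar{f}_{B_5}$ by hand and confirm the case $n=5$ of the recurrence.
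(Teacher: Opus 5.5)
Your proposal is correct and is the paper's own argument: the paper proves this lemma by substituting $f_{B_n^{-1}}(y)=(-1)^{n(n+1)/2}\bar{f}_{B_n}(y)$ from Lemma \ref{lem-fB-det} into the recurrence of Lemma \ref{lem-fB-rec} and simplifying. Your sign ratios $\epsilon_{n-1}/\epsilon_n=(-1)^n$, $\epsilon_{n-2}/\epsilon_n=-1$, $\epsilon_{n-3}/\epsilon_n=-(-1)^n$, $\epsilon_{n-4}/\epsilon_n=1$ are correct, and they supply the ``simplifying'' step that the paper leaves implicit.
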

\begin{proof}
Substituting \eqref{eq-fB2} into the recurrence \eqref{eq-fB-r} and simplifying yields the stated formula.\qedhere
\end{proof}

Let $g_n(y)=\mathbf{u}_{n}^\top(I_n-yB_{n})^{*}\,\mathbf{u}_{n}$. The next lemma relates $g_n(y)$ to $\bar{f}_{B_n}(y)$.
\begin{lem}\label{lem-g-rec1}
For any integer $n\ge 5$,
\begin{equation}\label{eq-g-rec1}
g_n(y)=2g_{n-2}(y)-g_{n-4}(y)+2\bigl[\bar{f}_{B_{n-2}}(y)-\bar{f}_{B_{n-4}}(y)\bigr].
\end{equation}
\end{lem}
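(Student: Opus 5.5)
We aim to prove \eqref{eq-g-rec1}. Since $\bar f_{B_n}(y)=\det(I_n-yB_n)$ is nonzero as a formal power series, the adjugate satisfies $(I_n-yB_n)^*=\bar f_{B_n}(y)\,(I_n-yB_n)^{-1}$. Hence
\[
g_n(y)=\bar f_{B_n}(y)\,\mathbf{u}_n^\top (I_n-yB_n)^{-1}\mathbf{u}_n .
\]
We will therefore compute the quadratic form $\mathbf{u}_n^\top (I_n-yB_n)^{-1}\mathbf{u}_n$ through the explicit inverse $B_n^{-1}$ rather than through cofactors. Write $J_n$ for the anti-identity (reversal) matrix. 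Then $B_n^{-1}$ is $J_n$ times the tridiagonal Toeplitz-like matrix with rows $(1,-2,1)$ shifted, which is essentially $J_n$ times a second-difference operator. Moreover,
\[
(I_n-yB_n)^{-1}=(B_n^{-1}-yI_n)^{-1}B_n^{-1},
\]
so we need $\mathbf{u}_n^\top (B_n^{-1}-yI_n)^{-1}B_n^{-1}\mathbf{u}_n$. The vector $B_n^{-1}\mathbf{u}_n$ is sparse: the row sums of $B_n^{-1}$ vanish except in the first two rows, which give $(1,-1,0,\dots,0)^\top$. The problem thus reduces to two entries of the resolvent of the banded matrix $B_n^{-1}-yI_n$ summed against $\mathbf{u}_n$. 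Cramer's rule turns these into the determinant $\det(yI_n-B_n^{-1})=\pm\bar f_{B_n}(y)$, by Lemma~\ref{lem-fB-det}, together with a few bordered minors.

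The key steps are as follows.

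\emph{Step 1.} By the determinant identity $\det(M+\mathbf{a}\mathbf{b}^\top)=\det M+\mathbf{b}^\top M^*\mathbf{a}$, write
\[
g_n(y)=\det\bigl(I_n-yB_n+\mathbf{u}_n\mathbf{u}_n^\top\bigr)-\bar f_{B_n}(y).
\]
This expresses $g_n$ as a difference of two determinants of $n\times n$ matrices whose only difference is a rank-one all-ones perturbation.

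\emph{Step 2.} Factor out $B_n$ on the right, using $\mathbf{u}_n^\top=(B_n^{-1}\mathbf{u}_n)^\top B_n$ and the symmetry of $B_n$. This turns the first determinant into $\det(B_n)\det\bigl(B_n^{-1}-yI_n+\mathbf{e}\,\mathbf{u}_n^\top\bigr)$, where $\mathbf{e}=B_n^{-1}\mathbf{u}_n$ is supported on the first two coordinates. The resulting matrix is banded (anti-tridiagonal plus diagonal) except for a perturbation confined to two rows. Two-step cofactor expansions of this kind are exactly what produced Lemmas~\ref{lem-CD2} and~\ref{lem-CD1}.

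\emph{Step 3.} Expand this perturbed determinant along the outer rows and columns, exactly as in the proof of Lemma~\ref{lem-CD2}, splitting boundary rows by multilinearity. Each expansion peels off two layers, since the anti-diagonal structure pairs row $1$ with column $n$. The result should express the perturbed determinant at size $n$ as twice that at size $n-2$, minus that at size $n-4$, plus correction terms involving $\det(yI-B^{-1})$ at sizes $n-2$ and $n-4$. The coefficients $(2,-1)$ and the correction $2[\bar f_{B_{n-2}}-\bar f_{B_{n-4}}]$ come from the second-difference pattern $(1,-2,1)$ and from the boundary.

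\emph{Step 4.} Convert everything back using Lemma~\ref{lem-fB-det} and $\det B_n=(-1)^{n(n-1)/2}$. Under $n\mapsto n-2$ these signs change by $(-1)^{2n-1}\cdot(\text{matching})$, so consistent sign factors appear. Finally, subtract the $\bar f$ contributions from Step 1 to isolate $g_n$.

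The main obstacle is Step 3: tracking signs and the boundary corrections produced by the rank-one perturbation. As a safeguard, we will first check \eqref{eq-g-rec1} numerically for $n=5,6$ from the known values $g_1,\dots,g_4$ (equivalently from $P_0,\dots,P_3$ times the appropriate sign). This also pins down the sign conventions relating $g_n$ and $\bar f_{B_n}$.

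If the direct expansion becomes unwieldy, we will instead try the fallback of proving the identity for the resolvent quantity $R_n=\mathbf{u}_n^\top(I_n-yB_n)^{-1}\mathbf{u}_n=FL_2(n-1,y)$ combinatorially/transfer-wise, and then multiply through by the recurrence of Lemma~\ref{lem-fBnpie}.
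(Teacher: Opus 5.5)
Steps 1, 2 and 4 are correct bookkeeping, and $B_n^{-1}\mathbf{u}_n=\mathbf{e}_1-\mathbf{e}_2$ is a useful observation. But the whole content of the lemma has to come out of Step 3, and there you only say what the expansion ``should'' give. Nothing in the $B_n^{-1}$ picture supports that prediction:
\begin{itemize}
\item Deleting the frame (rows and columns $1$ and $n$) of $B_n^{-1}-yI_n$ does leave $B_{n-2}^{-1}-yI_{n-2}$. However, the perturbation $(\mathbf{e}_1-\mathbf{e}_2)\mathbf{u}_n^\top$ restricts to $-\mathbf{e}_1\mathbf{u}_{n-2}^\top$ on that interior, so your family does not reproduce itself.
\item Row and column $n$ couple the frame to the interior through the entries $1,-2,1$.
\item Deleting only row and column $1$ gives $-(yI_{n-1}+D_{n-1})$. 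This is exactly why the analogous expansion in Lemma~\ref{lem-CD1} produces $f_{D_{n-1}}(-y)$: an odd-step relation with $y\mapsto -y$, not the even-step, same-$y$ relation you need.
\end{itemize}
Your predicted outcome also has the wrong shape. With $\Delta_n:=\det(B_n^{-1}-yI_n+\mathbf{e}\mathbf{u}_n^\top)=\det(B_n)\,(g_n+\bar f_{B_n})$, the lemma is equivalent to $\Delta_n=-2\Delta_{n-2}-\Delta_{n-4}+\det(B_n)\bigl(\bar f_{B_n}-\bar f_{B_{n-4}}\bigr)$. So the correction must involve $\bar f_{B_n}$ at size $n$, and the coefficient of $\Delta_{n-2}$ is $-2$, because $\det B_{n-2}=-\det B_n$. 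The fallback is too unspecific to assess. The numerical safeguard also cannot be run from $g_1,\dots,g_4$ alone: you need $g_5,g_6$ computed independently, and $g_n=P_{n-1}$ exactly, with no sign.

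What is missing is the nesting of $B_n$ itself, which the inverse hides. The principal submatrix of $B_n$ on the indices $2,\dots,n$ has entries $(n-i-j)_+$ after re-indexing, i.e.\ it is $B_{n-2}\oplus(0)$. Use your identity in the form $B_n(\mathbf{e}_1-\mathbf{e}_2)=\mathbf{u}_n$ inside the bordered determinant $g_n=-\det\begin{pmatrix}0&\mathbf{u}_n^\top\\ \mathbf{u}_n&I_n-yB_n\end{pmatrix}$. The operations $C_2\leftarrow C_2+yC_1-C_3$ and $R_2\leftarrow R_2+yR_1-R_3$ turn column and row $2$ into $(0,2,-1,0,\dots,0)$ and leave the block $(I_{n-2}-yB_{n-2})\oplus 1$. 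Writing column $2$ as $2\mathbf{e}_2-\mathbf{e}_3$ splits the determinant as $2\det\Gamma-\det\Gamma'$. Here $\Gamma$ and $\Gamma'$ are the all-ones-bordered determinants of $(I_{n-2}-yB_{n-2})\oplus 1$ and, nesting once more, $(I_{n-4}-yB_{n-4})\oplus I_2$. Since $\det\begin{pmatrix}0&\mathbf{a}^\top\\ \mathbf{a}&A\end{pmatrix}=-\mathbf{a}^\top A^{*}\mathbf{a}$ and the adjugate of a block-diagonal matrix is explicit, $\det\Gamma=-(g_{n-2}+\bar f_{B_{n-2}})$ and $\det\Gamma'=-(g_{n-4}+2\bar f_{B_{n-4}})$. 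This gives \eqref{eq-g-rec1} at once. These are exactly the paper's row and column operations; splitting column $2$ is just the cleanest way to finish its expansion.
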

\begin{proof}
By definition, $g_n(y)$ is the quadratic form $\mathbf{u}_{n}^\top(I_n-yB_{n})^{*}\mathbf{u}_{n}$, where $M^*$ denotes the adjugate matrix.
We invoke the Schur complement formula for bordered determinants, which states that for any invertible matrix $A$ and vector $\mathbf{v}$,
$\mathbf{v}^\top A^{*}\mathbf{v}=-\det\begin{pmatrix}
     0          &  \mathbf{v}^\top \\
     \mathbf{v} & A
\end{pmatrix}$.
Applying the Schur complement identity, we express $g_n(y)$ as the determinant of an augmented $(n+1)\times(n+1)$ matrix:
\begin{equation*}\label{eq-gf0}
g_n(y) = \mathbf{u}_{n}^\top(I_n-yB_{n})^{*}\mathbf{u}_{n} = -\det(M_n),
\end{equation*}
where $M_n$ is given by
$$
M_n = \begin{pmatrix}
     0 &      1 &      1 & \cdots &    1 &    1 & 1 \\
     1 & 1-ny   &-(n-1)y & \cdots & -3y  & -2y  & -y\\
     1 &-(n-1)y &1-(n-2)y & \cdots & -2y  &  -y  & 0 \\
     1 &-(n-2)y &-(n-3)y & \cdots &  -y  &   0  & 0 \\
 \vdots & \vdots & \vdots & \ddots & \vdots & \vdots & \vdots \\
     1 &    -y  &      0  & \cdots &    0  &    0  & 1
\end{pmatrix}.
$$
To simplify the structure of $\det(M_n)$, we perform a sequence of elementary row and column operations to eliminate the linear $y$-terms in the second row and column.
Specifically, we apply:
\begin{enumerate}
\item $C_2 \leftarrow C_2 + y C_1 - C_3$ (adding $y$ times the first column and $-1$ times the third column to the second);
\item $R_2 \leftarrow R_2 + y R_1 - R_3$ (adding $y$ times the first row and $-1$ times the third row to the second).
\end{enumerate}
These operations preserve the determinant and yield the following simplified form:
\begin{equation}\label{eq-gf1}
g_n(y) = -\det
\begin{pmatrix}
0 &   0 &   1 &   1 & \cdots &   1 &   1 & 1 \\
0 &   2 &  -1 &   0 & \cdots &   0 &   0 & 0 \\
1 &  -1 & 1-(n-2)y & -(n-3)y & \cdots & -2y & -y & 0 \\
1 &   0 & -(n-3)y & 1-(n-4)y & \cdots & -y  &  0 & 0 \\
\vdots&\vdots& \vdots & \vdots & \ddots & \vdots & \vdots & \vdots \\
1 &   0 &   -y  &    0  & \cdots &    0  &    1 & 0 \\
1 &   0 &    0  &    0  & \cdots &    0  &    0 & 1
\end{pmatrix}.
\end{equation}
Expanding the determinant \eqref{eq-gf1} along its second column gives
\[
\underbrace{-2\det
\begin{pmatrix}
   0 &   1        & \cdots &   1 &   1 & 1 \\
   1 & 1-(n-2)y   & \cdots & -2y & -y & 0 \\
   1 & -(n-3)y    & \cdots & -y  &  0 & 0 \\
\vdots& \vdots    & \ddots & \vdots & \vdots & \vdots \\
   1 &   -2y      & \cdots &   1  &  0 & 0 \\
   1 &    -y      & \cdots &   0  &  1 & 0 \\
   1 &     0      & \cdots &   0  &  0 & 1
\end{pmatrix}}_{(I)}
+\underbrace{\det
\begin{pmatrix}
   0 &   1 & \cdots &   1 &   1 & 1 \\
   1 & 1-(n-4)y & \cdots & -y  &  0 & 0 \\
\vdots& \vdots & \ddots & \vdots & \vdots & \vdots \\
   1 &    -y  & \cdots &   1  &  0 & 0 \\
   1 &     0  & \cdots &   0  &  1 & 0 \\
   1 &     0  & \cdots &   0  &  0 & 1
\end{pmatrix}}_{(II)}.
\]

Expanding (I) and (II) along their last columns yields, respectively,
\begin{equation}\small \label{eq-gf2}
2\det
\begin{pmatrix}
 1-(n-2)y & -(n-3)y & \cdots & -2y & -y \\
 -(n-3)y  & 1-(n-4)y& \cdots & -y  &  0 \\
 \vdots   & \vdots  & \ddots & \vdots & \vdots \\
    -2y   &   -y    & \cdots &  1  &  0 \\
     -y   &    0    & \cdots &  0  &  1
\end{pmatrix}\\
-2\det
\begin{pmatrix}
   0 &   1        & \cdots &   1 &   1 \\
   1 & 1-(n-2)y   & \cdots & -2y & -y \\
   1 & -(n-3)y    & \cdots & -y  &  0 \\
\vdots& \vdots    & \ddots & \vdots & \vdots \\
   1 &   -2y      & \cdots &   1  &  0 \\
   1 &    -y      & \cdots &   0  &  1
\end{pmatrix}
\end{equation}
and
\begin{equation*}
\underbrace{-\det
\begin{pmatrix}
 1-(n-4)y & \cdots & -y & 0 \\
 \vdots   & \ddots & \vdots & \vdots \\
    -y    & \cdots &  1 & 0 \\
     0    & \cdots &  0 & 1
\end{pmatrix}
+
\det
\begin{pmatrix}
   0 &   1 & \cdots &   1 &   1 \\
   1 & 1-(n-4)y & \cdots & -y  &  0 \\
\vdots& \vdots & \ddots & \vdots & \vdots \\
   1 &    -y  & \cdots &   1  &  0 \\
   1 &     0  & \cdots &   0  &  1
\end{pmatrix}}_{(III)}.
\end{equation*}
Expanding the two determinants in (III) along their last rows gives
\begin{equation}\label{eq-gf3}
-2\det
\begin{pmatrix}
 1-(n-4)y & \cdots & -y \\
 \vdots   & \ddots & \vdots \\
    -y    & \cdots &  1
\end{pmatrix}
+
\det
\begin{pmatrix}
   0 &   1 & \cdots &   1 \\
   1 & 1-(n-4)y & \cdots & -y \\
\vdots& \vdots & \ddots & \vdots \\
   1 &    -y  & \cdots &  1
\end{pmatrix}.
\end{equation}
Substituting \eqref{eq-gf2} and \eqref{eq-gf3} into \eqref{eq-gf1} and reorganizing the terms produces \eqref{eq-g-rec1}.
\end{proof}

\begin{lem}\label{lem-gfD}
For any integer $n\ge 5$,
$g_n(y)+g_{n-2}(y)=(-1)^{\frac{n(n+1)}{2}+1}\,2f_{D_{n-2}}(y)$.
\end{lem}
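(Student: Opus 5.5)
Since the statement $g_n(y)+g_{n-2}(y)=(-1)^{\frac{n(n+1)}{2}+1}\,2f_{D_{n-2}}(y)$ relates a bordered determinant to a characteristic polynomial, I will work at the level of determinants rather than through the recurrences. The starting point is the representation from the proof of Lemma \ref{lem-g-rec1}: $g_n(y)=-\det(M_n)$, where $M_n$ is $I_n-yB_n$ bordered by the all-ones row and column. The idea is to undo $B_n$ in favour of the sparse matrix $B_n^{-1}$, which has the same three anti-diagonal bands $1,-2,1$ as $D_n$. From $I_n-yB_n=B_n(B_n^{-1}-yI_n)$ we get $(I_n-yB_n)^*=\det(B_n)\,(B_n^{-1}-yI_n)^*B_n^{-1}$. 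Hence
\[
g_n(y)=\det(B_n)\,\mathbf{u}_n^\top (B_n^{-1}-yI_n)^*\,B_n^{-1}\mathbf{u}_n .
\]
Because the row sums of $B_n^{-1}$ are $0$ except in the last two rows, $B_n^{-1}\mathbf{u}_n=e_n-e_{n-1}$ up to the exact boundary pattern, which I will check on $B_5$. So $g_n$ becomes $\pm$ a difference of two adjugate entries, i.e.\ signed cofactors, of the tridiagonal-type matrix $B_n^{-1}-yI_n$. The sign is $\det(B_n)=(-1)^{n(n-1)/2}$, taken from the proof of Lemma \ref{lem-fB-det}.

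The next step is to identify these cofactors by deleting a row and column near the boundary of $yI_n-B_n^{-1}$. The minors left over are anti-triangular banded matrices of size $n-1$. After reversing the column order, as was done implicitly in Lemmas \ref{lem-CD1} and \ref{lem-CD2}, they become characteristic-type determinants of $A_{n-1}$, $D_{n-1}$ or $B_{n-2}^{-1}$, possibly evaluated at $-y$. Concretely, I expect
\[
g_n(y)=\pm\bigl[c_1(y)-c_2(y)\bigr],
\]
where each $c_i$ is expressible through $f_{A_k}$, $f_{D_k}$ and $f_{B_k^{-1}}$ with $k\in\{n-2,n-1\}$. If this route gets messy, the alternative is to continue directly from the expansions (I)--(III) in the proof of Lemma \ref{lem-g-rec1}. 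Those expansions already express $g_n$ as $2\bar f_{B_{n-2}}$ minus a bordered determinant of size $n-1$, so $g_n+g_{n-2}$ can be assembled from them as well.

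Writing the same formula for $n-2$ and adding, the terms involving $A$ and $B^{-1}$ must cancel, leaving $2f_{D_{n-2}}$. The tools for this are Lemma \ref{lem-CD2}, $f_{A_n}-f_{A_{n-2}}=f_{D_n}+f_{D_{n-2}}$, together with the identity $f_{B_n^{-1}}=f_{A_n}$ and Lemma \ref{lem-CD1}. As a safety check and to pin down the sign $(-1)^{\frac{n(n+1)}{2}+1}$, I will verify the statement for $n=5,6$ by explicit computation. For $n=5$ this uses $g_1,\dots,g_5$ computed from $B_1,\dots,B_5$ and $D_3$.

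The main obstacle is the bookkeeping in the second step. The cofactor positions, the reversal permutation signs $(-1)^{\lfloor k/2\rfloor}$, and the sign change $y\to -y$ coming from factoring $-1$ out of rows must all combine to the single clean sign $(-1)^{n(n+1)/2+1}$. The boundary corner entry ($1$ versus $2$) must also come out right, since that entry is exactly what distinguishes $A$ from $D$. I would first fix all these signs on the small cases $n=5,6,7$ and only then write the general argument.
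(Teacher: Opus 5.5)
\paragraph{Verdict: genuine gap.} Your overall plan differs from the paper's. You want a closed form for $g_n$ alone, then add the cases $n$ and $n-2$ and cancel with \eqref{eq-fCn-fDn1}. The paper instead reads off $g_n=(-1)^{\frac{n(n+1)}{2}+1}2f_{D_{n-2}}(y)-g_{n-2}(y)$ from a single bordered determinant. The step that is supposed to deliver your closed form fails.

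\paragraph{Where it breaks.} The row sums of $B_n^{-1}$ vanish except in the \emph{first} two rows, so $B_n^{-1}\mathbf{u}_n=e_1-e_2$. More importantly, this clears only the right-hand all-ones vector. In $g_n=\det(B_n)\,\mathbf{u}_n^\top(B_n^{-1}-yI_n)^*(e_1-e_2)$ the dense $\mathbf{u}_n^\top$ is still on the left. So this is a difference of two column sums of the adjugate, i.e.\ of two full $n\times n$ determinants obtained from $B_n^{-1}-yI_n$ by replacing row $1$, resp.\ row $2$, by $\mathbf{u}_n^\top$. These are not cofactors of an anti-banded matrix, nor characteristic polynomials of type $A$, $D$ or $B^{-1}$. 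Your second paragraph therefore has nothing to identify, and the proposal never states an identity for $g_n$: the whole content of the lemma is deferred to ``I expect $g_n=\pm[c_1-c_2]$''. If you also write $\mathbf{u}_n^\top=(e_1-e_2)^\top B_n$ and use $B_nN^{-1}=y^{-1}(N^{-1}-B_n)$ with $N=B_n^{-1}-yI_n$ and $(e_1-e_2)^\top B_n(e_1-e_2)=0$, you get $g_n=\det(B_n)\,y^{-1}(e_1-e_2)^\top N^*(e_1-e_2)$. That involves off-diagonal cofactors and a division by $y$, not the picture you describe. The fallback via the expansions (I)--(III) is not developed.

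\paragraph{The missing idea.} Remove $\mathbf{u}_n$ from both sides at once by a congruence that keeps everything banded.
\begin{itemize}
\item Let $E$ be the upper triangular all-ones matrix and $J_m$ the $m\times m$ reversal matrix. Then $B_n=EJ_nE^\top$ and $E^{-1}\mathbf{u}_n=e_n$.
\item Hence $I_n-yB_n=E\,(E^{-1}E^{-\top}-yJ_n)\,E^\top$ with $\det E=1$, and $g_n$ is the $(n,n)$ cofactor of $E^{-1}E^{-\top}-yJ_n$.
\item $E^{-1}E^{-\top}$ is the tridiagonal $(-1,2,-1)$ matrix with last diagonal entry $1$. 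So $g_n=\det\bigl(T_{n-1}-y(0\oplus J_{n-2})\bigr)$, where $T_m$ is the $m\times m$ tridiagonal $(-1,2,-1)$ matrix. This is what the paper's differencing $R_i\leftarrow R_i-R_{i+1}$, $C_i\leftarrow C_i-C_{i+1}$ accomplishes.
\item Expand along the first row $(2,-1,0,\dots,0)$. This gives $g_n=2\det(T_{n-2}-yJ_{n-2})-g_{n-2}$; the complementary minor becomes $T_{n-3}-y(0\oplus J_{n-4})$ after reversing its rows and columns.
\item Finally $T_{n-2}-yJ_{n-2}=J_{n-2}(D_{n-2}-yI)$, so $\det(T_{n-2}-yJ_{n-2})=(-1)^{\frac{(n-1)(n-2)}{2}}f_{D_{n-2}}(y)=(-1)^{\frac{n(n+1)}{2}+1}f_{D_{n-2}}(y)$.
\end{itemize}
That proves the lemma outright, with no need for Lemma \ref{lem-CD1}.

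\paragraph{Salvaging your architecture.} The closed form to aim for is $g_n=(-1)^{\frac{n(n+1)}{2}+1}\bigl(f_{A_{n-2}}(y)+f_{D_{n-2}}(y)\bigr)$; for example $g_5=f_{A_3}+f_{D_3}=2y^3-4y^2-10y+5$. From it, \eqref{eq-fCn-fDn1} at index $n-2$ finishes exactly as you anticipate. But establishing that closed form requires the same two-sided reduction.
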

\begin{proof}
We have
\begin{align}
g_n(y)&=\mathbf{u}_{n}^\top(I_n-yB_{n})^{*}\mathbf{u}_{n} \nonumber \\
&=-\det
\begin{pmatrix}
     0 &      1 &      1 & \cdots &    1 &    1 & 1 \\
     1 & 1-ny   &-(n-1)y & \cdots & -3y  & -2y  & -y\\
     1 &-(n-1)y &1-(n-2)y & \cdots & -2y  &  -y  & 0 \\
     1 &-(n-2)y &-(n-3)y & \cdots &  -y  &   0  & 0 \\
 \vdots & \vdots & \vdots & \ddots & \vdots & \vdots & \vdots \\
     1 &   -3y  &   -2y  & \cdots &   1  &   0  & 0 \\
     1 &   -2y  &    -y  & \cdots &   0  &   1  & 0 \\
     1 &    -y  &     0  & \cdots &   0  &   0  & 1
\end{pmatrix}. \label{eq-gfD1}
\end{align}
To simplify this determinant, we first apply the row operations $R_i\leftarrow R_i-R_{i+1}$ for $i$ from $2$ to $n-1$.
These operations eliminate the arithmetic progression in the $y$ terms, yielding
\begin{equation}\label{eq-gfD2}
g_n(y)=-\det
\begin{pmatrix}
     0 &   1 &   1 &   1 & \cdots &   1 &   1 & 1 \\
     0 & 1-y &-1-y &  -y & \cdots & -y  & -y  &-y \\
     0 &  -y & 1-y &-1-y & \cdots & -y  & -y  & 0 \\
     0 &  -y &  -y & 1-y & \cdots & -y  &  0  & 0 \\
 \vdots&\vdots&\vdots&\vdots& \ddots & \vdots & \vdots & \vdots \\
     0 &  -y &  -y &  -y & \cdots &  1  & -1  & 0 \\
     0 &  -y &  -y &   0 & \cdots &  0  &  1  &-1 \\
     1 &  -y &   0 &   0 & \cdots &  0  &  0  & 1
\end{pmatrix}.
\end{equation}
Expanding \eqref{eq-gfD2} along the first column (where the single non-zero entry is at position $(n+1,1)$) reduces the dimension to $n\times n$.
Accounting for the sign $-(-1)^{(n+1)+1}=(-1)^{n+3}$, we obtain
\[
g_n(y)=(-1)^{n+3}\det
\begin{pmatrix}
   1 &   1 &   1 & \cdots &   1 &   1 & 1 \\
 1-y&-1-y &  -y & \cdots & -y  & -y  &-y \\
  -y& 1-y &-1-y & \cdots & -y  & -y  & 0 \\
  -y&  -y & 1-y & \cdots & -y  &  0  & 0 \\
\vdots&\vdots&\vdots& \ddots & \vdots & \vdots & \vdots \\
  -y&  -y &  -y & \cdots &  1  & -1  & 0 \\
  -y&  -y &   0 & \cdots &  0  &  1  &-1
\end{pmatrix}.
\]
Next, we perform the column operations $C_i\leftarrow C_i-C_{i+1}$ for $i=1,\dots,n-1$ to simplify the first row. This results in
\[
g_n(y)=(-1)^{n+3}\det
\begin{pmatrix}
   0 &   0 &   0 & \cdots &   0 &   0 & 1 \\
   2 &  -1 &   0 & \cdots &   0 &   0 &-y \\
  -1 &   2 &  -1 & \cdots &   0 &  -y & 0 \\
   0 &  -1 &   2 & \cdots &  -y &   0 & 0 \\
\vdots&\vdots&\vdots& \ddots & \vdots & \vdots & \vdots \\
   0 &   0 &  -y & \cdots &   2 &  -1 & 0 \\
   0 &  -y &   0 & \cdots &  -1 &   2 &-1
\end{pmatrix}.
\]
Finally, expanding this determinant along the first row (at position $(1,n)$) leads to an $(n-1)\times (n-1)$ minor.
By structurally decomposing this minor into a block resembling $D_{n-2}$ and a recursive block corresponding to $g_{n-2}$,
and collecting the permutation signs, we arrive at the identity
\[
g_n(y)=(-1)^{\frac{n(n+1)}{2}+1}\,2f_{D_{n-2}}(y)-g_{n-2}(y).
\]
This completes the proof.
\end{proof}

\begin{lem}\label{lem-fBg}
For any integer $n\ge 5$,
\[
2\bigl[\bar{f}_{B_{n-2}}(y)-\bar{f}_{B_{n-4}}(y)\bigr]
   =-y\bigl[g_{n-1}(-y)+g_{n-3}(-y)\bigr].
\]
\end{lem}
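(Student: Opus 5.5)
The plan is to express both sides through the polynomials $f_{D_k}$ and $f_{B_k^{-1}}$, where the earlier lemmas give closed relations. Everything else is sign bookkeeping.

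\emph{Left-hand side.} By Lemma~\ref{lem-fB-det}, $\bar f_{B_k}(y)=(-1)^{k(k+1)/2}f_{B_k^{-1}}(y)$. The exponents for $k=n-2$ and $k=n-4$ differ by $(2n-5)$ terms of the form $\frac{k(k+1)}{2}$ shifted by two; concretely,
\[
\tfrac{(n-2)(n-1)}{2}-\tfrac{(n-4)(n-3)}{2}=2n-5,
\]
which is odd. Hence
\[
\bar f_{B_{n-2}}(y)-\bar f_{B_{n-4}}(y)=\epsilon\bigl[f_{B^{-1}_{n-2}}(y)+f_{B^{-1}_{n-4}}(y)\bigr],\qquad \epsilon=(-1)^{\frac{(n-2)(n-1)}{2}}.
\]
Lemma~\ref{lem-CD1}, applied with index $n-2$, gives
\[
f_{B^{-1}_{n-2}}(y)+f_{B^{-1}_{n-4}}(y)=(-1)^{n-3}\,y\,f_{D_{n-3}}(-y).
\]
So the left-hand side equals $2\epsilon(-1)^{n-1}y\,f_{D_{n-3}}(-y)$.

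\emph{Right-hand side.} Lemma~\ref{lem-gfD}, applied with index $n-1$ and with $y$ replaced by $-y$, gives
\[
g_{n-1}(-y)+g_{n-3}(-y)=(-1)^{\frac{(n-1)n}{2}+1}\,2f_{D_{n-3}}(-y).
\]
So the right-hand side equals $(-1)^{\frac{n(n-1)}{2}}\,2y\,f_{D_{n-3}}(-y)$.

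\emph{Matching signs.} It remains to check
\[
(-1)^{\frac{(n-2)(n-1)}{2}+n-1}=(-1)^{\frac{n(n-1)}{2}}.
\]
The difference of the two exponents is
\[
\tfrac{n(n-1)-(n-2)(n-1)}{2}-(n-1)=(n-1)-(n-1)=0,
\]
so the signs agree and the identity follows.

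\emph{Ranges and the main obstacle.} Lemma~\ref{lem-CD1} requires index at least $3$, i.e.\ $n-2\ge3$, and Lemma~\ref{lem-gfD} requires index at least $5$, i.e.\ $n-1\ge5$. For the small cases $n=5,6$ outside these ranges, I would verify the identity directly from the explicit small matrices $B_1,\dots,B_4$ and $g_2,\dots,g_5$. The main obstacle is reliability of the sign conventions in Lemma~\ref{lem-gfD}, whose proof is sketchy. As a safeguard I would check the final identity numerically for $n=5,6,7$, using $\det(I-yB_k)$ and the adjugate quadratic forms, before trusting the sign bookkeeping.
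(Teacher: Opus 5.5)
Your proof is correct and is the paper's argument: you pass to $f_{B^{-1}_k}$ via Lemma~\ref{lem-fB-det}, express both sides through $f_{D_{n-3}}(-y)$ using Lemma~\ref{lem-CD1} at index $n-2$ and Lemma~\ref{lem-gfD} at index $n-1$ (with $y\mapsto -y$), and then match the signs, which you do correctly. Your range caveat improves on the paper, but it is only needed for $n=5$, where Lemma~\ref{lem-gfD} would be invoked at index $4$; a direct check there gives $2y^3-4y^2-6y$ on both sides, whereas $n=6$ already lies within the stated ranges.
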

\begin{proof}
By combining Lemma \ref{lem-CD1} with Lemma \ref{lem-gfD} and Lemma \ref{lem-fB-det}, and eliminating $f_{D}(y)$, we obtain
\[
(-1)^{\frac{(n-2)(n-1)}{2}}\bar{f}_{B_{n-2}}(y)
+(-1)^{\frac{(n-4)(n-3)}{2}}\bar{f}_{B_{n-4}}(y)
=(-1)^{\frac{n(n-1)}{2}+n}\,\frac{y}{2}\,
\bigl[g_{n-1}(-y)+g_{n-3}(-y)\bigr].
\]
Simplifying the sign factors then yields the desired relation.
\end{proof}

\begin{lem}\label{lem-gny}
For any integer $n\ge 5$,
\begin{equation}\label{eq-g-rec2}
g_n(y)=-y\bigl[g_{n-1}(-y)+g_{n-3}(-y)\bigr]
       +2g_{n-2}(y)-g_{n-4}(y).
\end{equation}
\end{lem}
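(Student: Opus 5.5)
This follows by substitution from the two preceding lemmas. Lemma \ref{lem-g-rec1} gives
\[
g_n(y)=2g_{n-2}(y)-g_{n-4}(y)+2\bigl[\bar{f}_{B_{n-2}}(y)-\bar{f}_{B_{n-4}}(y)\bigr],
\]
and Lemma \ref{lem-fBg} identifies the bracketed correction term. Its statement reads
\[
2\bigl[\bar{f}_{B_{n-2}}(y)-\bar{f}_{B_{n-4}}(y)\bigr]=-y\bigl[g_{n-1}(-y)+g_{n-3}(-y)\bigr].
\]
Substituting the second identity into the first gives exactly \eqref{eq-g-rec2}. So the plan is a one-line combination of these two lemmas, both valid for $n\ge 5$; no index shift is needed because both hold for the same $n$.

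The only real issue is whether Lemma \ref{lem-fBg} holds with exactly the sign stated. Its proof leaves the reduction of the sign factors implicit, so I would check it before relying on it.

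To do this, I would take Lemma \ref{lem-CD1} at index $n-1$ and at index $n-3$. I would add the two instances and use Lemma \ref{lem-gfD} at $n-1$, evaluated at $-y$, to replace $f_{D_{n-2}}(-y)+f_{D_{n-4}}(-y)$ by a multiple of $g_{n-1}(-y)+g_{n-3}(-y)$. Here it helps that $(-1)^{\frac{m(m+1)}{2}}$ takes the same value at $m=n-1$ and $m=n-3$ up to the factor $(-1)^{2n-3}=-1$; more precisely, the difference of the exponents, $\frac{(n-1)n-(n-3)(n-2)}{2}=2n-3$, is odd. Finally I would convert $f_{B^{-1}}$ into $\bar f_B$ by Lemma \ref{lem-fB-det} and track the signs $(-1)^{\frac{m(m+1)}{2}}$ for $m=n-2$ and $m=n-4$. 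In that conversion the two sign factors differ by $(-1)^{2n-5}=-1$, and this turns the sum $f_{B^{-1}_{n-2}}+f_{B^{-1}_{n-4}}$ into the difference $\bar f_{B_{n-2}}-\bar f_{B_{n-4}}$.

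As a sanity check I would verify the resulting recurrence numerically for small $n$. Computing $g_n(y)=\mathbf{u}_n^\top(I_n-yB_n)^*\mathbf{u}_n$ directly for $n=1,\dots,5$ should give $g_n=P_{n-1}$, consistent with Theorem \ref{theo-1}: $P_0=1$, $P_1=2$, $P_2=3-2y$, $P_3=4-4y-2y^2$. The case $n=5$ can then be tested against the recurrence.

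The main obstacle is this sign bookkeeping; once Lemma \ref{lem-fBg} is confirmed, the statement is immediate.
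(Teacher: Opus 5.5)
Your proof is the paper's proof: substitute Lemma~\ref{lem-fBg} for the bracket $2[\bar f_{B_{n-2}}(y)-\bar f_{B_{n-4}}(y)]$ in Lemma~\ref{lem-g-rec1}, and \eqref{eq-g-rec2} follows. Both lemmas are used at the same $n$, as you say.

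Your optional sign check of Lemma~\ref{lem-fBg} is not needed for this statement, and as written it would not go through because the indices are mismatched. Adding Lemma~\ref{lem-CD1} at indices $n-1$ and $n-3$ gives $f_{D_{n-2}}(-y)+f_{D_{n-4}}(-y)$, together with $f_{B^{-1}}$ at indices $n-1,n-3,n-5$. But Lemma~\ref{lem-gfD} at $n-1$ reads $g_{n-1}+g_{n-3}=(-1)^{\frac{n(n-1)}{2}+1}\,2f_{D_{n-3}}$, which involves the single polynomial $f_{D_{n-3}}$. So it cannot replace that sum, and you never reach indices $n-2,n-4$.

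The correct bookkeeping uses Lemma~\ref{lem-CD1} once, at index $n-2$: $f_{B^{-1}_{n-2}}(y)+f_{B^{-1}_{n-4}}(y)=(-1)^{n-1}y\,f_{D_{n-3}}(-y)$. Combine this with Lemma~\ref{lem-gfD} at $n-1$, evaluated at $-y$, and with Lemma~\ref{lem-fB-det} at $n-2$ and $n-4$; there your observation that the signs differ by $(-1)^{2n-5}$ is exactly what is needed. The total sign is $(-1)^{\frac{n(n-1)}{2}+\frac{(n-1)(n-2)}{2}+(n-1)}=(-1)^{n(n-1)}=1$, which confirms Lemma~\ref{lem-fBg} as stated.

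Your indexing $g_n=P_{n-1}$ in the numerical check is the correct one. Indeed $g_5=5-10y-4y^2+2y^3=P_4$ satisfies the recurrence at $n=5$.
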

\begin{proof}
Substituting the expression for $2[\bar{f}_{B_{n-2}}(y)-\bar{f}_{B_{n-4}}(y)]$ from Lemma \ref{lem-fBg} into the recurrence \eqref{eq-g-rec1} of Lemma \ref{lem-g-rec1} yields \eqref{eq-g-rec2}.
\end{proof}

Now we prove Theorems \ref{theo-1} and \ref{theo-2} using the previously established lemmas.
\begin{proof}[Proof of Theorem \ref{theo-1}]
Direct computation gives the initial values
\begin{align*}
g_1(y)&=1,            &\bar{f}_1(y)&=-y+1,\\
g_2(y)&=2,            &\bar{f}_2(y)&=-y^2-2y+1,\\
g_3(y)&=-2y+3,        &\bar{f}_3(y)&=y^3-2y^2-4y+1,\\
g_4(y)&=-2y^2-4y+4,   &\bar{f}_4(y)&=y^4+2y^3-7y^2-6y+1.
\end{align*}
By Lemma \ref{lem-fBnpie} and Lemma \ref{lem-gny}, the recurrences satisfied by $\bar{f}_{B_n}(y)$ and $g_n(y)$ are identical to those of $Q_n(y)$ and $P_n(y)$, respectively. Hence $\bar{f}_{B_n}(y)=Q_n(y)$ and $g_n(y)=P_n(y)$ for all positive integers $n$. Substituting these equalities into Theorem \ref{lem-FLn} completes the proof.
\end{proof}

\begin{proof}[Proof of Theorem \ref{theo-2}]
The statement follows directly by combining Corollary \ref{cor-FCn} with the recurrence for $\bar{f}_{B_n}(y)$ provided in Lemma \ref{lem-fBnpie}.
\end{proof}

\section{Proof of Theorem \ref{theo-hck}}\label{sec-3}

This section presents the proof of Theorem \ref{theo-hck}. Our approach combines the geometric method of polytope decomposition with the algebraic technique of constant term extraction. The core strategy involves analyzing the vertex structure of the polytope $\mathcal{P}(C_{n,1})$ and utilizing this geometric data to decompose its generating function, thereby making the constant term explicit.

\subsection{Magic Labellings for Pseudo-Cycle Graphs $C_{n,\mathbf{k}}$}\label{subsec-Fkxyt}
Throughout this subsection, all indices are interpreted modulo $n$, such that $v_{n+1} = v_1$ and $\beta_{n+1} = \beta_1$.

Let $C_{n,\mathbf{k}}$ be a pseudo-cycle graph with parameter $\mathbf{k}=(k_1,\dots,k_n)\in \mathbb{Z}_{\ge 1}^n$. We assign labels to the graph as follows:
\begin{itemize}
    \item Each edge $(v_i, v_j) \in E(C_{n,\mathbf{k}})$ with $1\le i<j\le n+1$ is assigned the label $\mu(v_i,v_j)=\beta_j$.
    \item The $j$-th self-loop at vertex $v_i$ is labeled by a non-negative integer $\alpha_{i,j}$ for $j=1,\dots,k_i$.
\end{itemize}

For a non-negative integer $s$, we define the set of valid labellings as:
\begin{equation}\label{eq:Snk}
S_{C_{n,\mathbf{k}}}(s) := \left\{ (\mathbf{\alpha},\mathbf{\beta}) \in \mathbb{N}^d \ \middle| \ \beta_i + \alpha_i + \beta_{i+1} = s, \quad \text{for } 1 \leq i \leq n \right\},
\end{equation}
where $\mathbf{\alpha}=(\alpha_{1,1},\dots,\alpha_{n,k_n})$, $\alpha_i=\sum_{j=1}^{k_i}\alpha_{i,j}$, and $\mathbf{\beta}=(\beta_1,\dots,\beta_n)$.

The associated generating function is defined as
\[
F_{\mathbf{k}}(\mathbf{x},\mathbf{y};t) := \sum_{s\ge 0}\sum_{(\mathbf{\alpha},\mathbf{\beta})\in S_{C_{n,\mathbf{k}}}(s)} \mathbf{x}^{\mathbf{\alpha}}\mathbf{y}^{\mathbf{\beta}}t^s,
\]
where $\mathbf{x}^{\mathbf{\alpha}} = \prod_{i,j} x_{i,j}^{\alpha_{i,j}}$ and $\mathbf{y}^{\mathbf{\beta}} = \prod_{i} y_i^{\beta_i}$.

Applying MacMahon's Partition Analysis yields the following constant term representation, which serves as a foundation for our subsequent computations.

\begin{lem}\label{lem-Fxyt}
For $\mathbf{k}=(k_1,\dots,k_n)\in \mathbb{Z}_{\ge 1}^n$,
\begin{equation}
F_{\mathbf{k}}(\mathbf{x},\mathbf{y};t) = \CT_{\mathbf{\lambda}} \frac{1}{\prod_{i=1}^{n}\left( \prod_{j=1}^{k_i}(1-x_{i,j}\lambda_i) \right) \left(1 - \frac{t}{\lambda_1\cdots\lambda_n}\right) \prod_{i=1}^{n}(1-y_i\lambda_i\lambda_{i+1}) },
\end{equation}
where $\CT_{\mathbf{\lambda}} f$ denotes the constant term of the Laurent series $f$ with respect to the variables $\mathbf{\lambda} = (\lambda_1, \dots, \lambda_n)$, and $\lambda_{n+1} = \lambda_1$.
\end{lem}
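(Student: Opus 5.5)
The plan is to use MacMahon's partition analysis: encode each linear equation $\beta_i+\alpha_i+\beta_{i+1}=s$ by a variable $\lambda_i$, so that a monomial carries the factor $\lambda_i^{\beta_i+\alpha_i+\beta_{i+1}-s}$. Extracting the constant term in $\boldsymbol\lambda$ then keeps exactly the monomials for which every equation holds. Concretely, we start from the formal sum
\[
\sum_{s\ge0}\sum_{\boldsymbol\alpha\in\mathbb N^{d-n}}\sum_{\boldsymbol\beta\in\mathbb N^n}\mathbf x^{\boldsymbol\alpha}\mathbf y^{\boldsymbol\beta}t^s\prod_{i=1}^n\lambda_i^{\beta_i+\alpha_i+\beta_{i+1}-s}.
\]
Since $\CT_{\lambda_i}\lambda_i^{e}$ equals $1$ when $e=0$ and $0$ otherwise, taking $\CT_{\boldsymbol\lambda}$ of this sum yields exactly $F_{\mathbf k}(\mathbf x,\mathbf y;t)$.

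Next I would factor the summand as a product over independent summation variables and sum each geometric series.
\begin{itemize}
\item The loop label $\alpha_{i,j}$ appears only in $x_{i,j}^{\alpha_{i,j}}\lambda_i^{\alpha_{i,j}}$, since $\alpha_i=\sum_j\alpha_{i,j}$. It therefore contributes $1/(1-x_{i,j}\lambda_i)$.
\item The index $s$ appears as $t^s(\lambda_1\cdots\lambda_n)^{-s}$, giving $1/(1-t/(\lambda_1\cdots\lambda_n))$.
\item The edge label $\beta_j$, on the edge $(v_{j-1},v_j)$, enters exactly two equations, namely those of $v_{j-1}$ and $v_j$. It thus contributes $(y_j\lambda_{j-1}\lambda_j)^{\beta_j}$.
\end{itemize}
After reindexing the edges cyclically (indices mod $n$), these factors become $\prod_i 1/(1-y_i\lambda_i\lambda_{i+1})$, with $y_i$ relabelled accordingly. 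Equivalently, the labelling convention $\mu(v_i,v_{i+1})$ is paired with $\lambda_i\lambda_{i+1}$. This produces the stated rational function.

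The only delicate points are bookkeeping and convergence. Summing the series must be legitimate as a statement about formal series, so I would work in the ring of formal power series in $t,\mathbf x,\mathbf y$ whose coefficients are Laurent polynomials in $\boldsymbol\lambda$. For each fixed monomial in $t,\mathbf x,\mathbf y$, only finitely many terms contribute, so the expansion is well defined and $\CT_{\boldsymbol\lambda}$ commutes with the summation.

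Cyclic consistency needs two checks. For $n\ge3$ the edges are distinct, so nothing more is required. For the degenerate cases $n=1,2$, the factor $\lambda_i\lambda_{i+1}$ becomes $\lambda_1^2$ for $n=1$, matching the convention that the two non-loop edge-ends carry the same label. For $n=2$ the two parallel edges give two separate factors, each equal to $\lambda_1\lambda_2$. I expect this index alignment between $\beta$'s and vertices to be the main (minor) obstacle, and I would resolve it by writing out the exponent of each $\lambda_i$ explicitly.
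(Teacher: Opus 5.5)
Your proposal is correct and follows the paper's route. The paper simply invokes MacMahon's partition analysis: attach $\lambda_i^{\beta_i+\alpha_i+\beta_{i+1}-s}$, sum the geometric series and take $\CT_{\boldsymbol\lambda}$, as in its worked $F_3$ example. Working in $\mathbb{Z}[\boldsymbol\lambda^{\pm1}][[t,\mathbf{x},\mathbf{y}]]$ supplies the justification the paper omits.

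The relabelling you flag is genuinely needed, not cosmetic. With $\mu(v_{j-1},v_j)=\beta_j$, the factor containing $\lambda_i\lambda_{i+1}$ must carry $y_{i+1}$. So for $n\ge 3$ the displayed identity holds only after replacing $y_i$ by $y_{i+1}$ on the right-hand side. This is harmless, since the paper only ever uses $\mathbf{y}=\mathbf{1}$.
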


For the case $\mathbf{k}=\mathbf{1}$, we denote $F_n(\mathbf{x},\mathbf{y};t) := F_{\mathbf{1}}(\mathbf{x},\mathbf{y};t)$. Furthermore, let $F_{\mathbf{k}}(\mathbf{x};t) := F_{\mathbf{k}}(\mathbf{x},\mathbf{1};t)$. The following lemma establishes a differential relationship between the generating functions for general $\mathbf{k}$ and the base case $\mathbf{k}=\mathbf{1}$.

\begin{lem}\label{lem-Fxyt-1-k}
For any $\mathbf{k}=(k_1,\dots,k_n)\in \mathbb{Z}_{\ge 1}^n$,
\begin{equation}\label{eq-Fk-F1}
F_{\mathbf{k}}(\mathbf{x};t)\Big|_{\mathbf{x}=\mathbf{1}} = \frac{1}{\prod_{i=1}^n (k_i - 1)!} \left.\left( \prod_{i=1}^n \mathrm{D}_{x_i}^{k_i-1} x_i^{k_i-1} F_n(\mathbf{x};t) \right)\right|_{x_i=1, \, i=1,\dots,n},
\end{equation}
where $\mathrm{D}_{x_i}^{k}$ denotes the $k$-th derivative operator with respect to $x_i$.
\end{lem}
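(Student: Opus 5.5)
The identity rests on one elementary observation: several loops at a vertex contribute only through their total label. Concretely, the generating function $F_{\mathbf{k}}(\mathbf{x};t)$ with all loop variables at vertex $i$ specialised to a single $x_i$ is obtained from $F_n(\mathbf{x};t)$ by replacing the weight of the single loop label $\alpha_i$ with the number of ways to split $\alpha_i$ into $k_i$ ordered non-negative parts, namely $\binom{\alpha_i+k_i-1}{k_i-1}$. I will then recognise this binomial coefficient as the action of $\frac{1}{(k_i-1)!}\mathrm{D}_{x_i}^{k_i-1}x_i^{k_i-1}$ on monomials.

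\textbf{Step 1 (reduction to total loop labels).} I start from Lemma \ref{lem-Fxyt}, or directly from the definition. Setting $x_{i,j}=x_i$ for all $j$ and $\mathbf{y}=\mathbf{1}$, the magic condition $\beta_i+\alpha_i+\beta_{i+1}=s$ involves the loops at $v_i$ only through $\alpha_i=\sum_j\alpha_{i,j}$. Equivalently, in the constant-term integrand the factor $\prod_{j=1}^{k_i}(1-x_{i,j}\lambda_i)^{-1}$ becomes $(1-x_i\lambda_i)^{-k_i}$. Grouping labellings by $(\alpha_1,\dots,\alpha_n,\boldsymbol\beta)$ then gives
\[
F_{\mathbf{k}}(\mathbf{x};t)=\sum_{s}\sum_{(\alpha,\boldsymbol\beta)\in S_{C_{n,\mathbf 1}}(s)}\prod_{i=1}^n\binom{\alpha_i+k_i-1}{k_i-1}x_i^{\alpha_i}\,t^s .
\]

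\textbf{Step 2 (operator identity).} For a monomial, the key computation is
\[
\frac{1}{(k-1)!}\mathrm{D}_x^{k-1}\bigl(x^{k-1}x^{a}\bigr)=\binom{a+k-1}{k-1}x^{a}.
\]
The operators for different $i$ act on distinct variables and therefore commute. Applying $\prod_i\frac{1}{(k_i-1)!}\mathrm{D}_{x_i}^{k_i-1}x_i^{k_i-1}$ termwise to
\[
F_n(\mathbf{x};t)=\sum_s\sum\prod_i x_i^{\alpha_i}t^s
\]
reproduces exactly the series of Step 1. Hence the identity holds as formal power series in $\mathbf{x},t$, even before specialising.

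\textbf{Step 3 (specialisation).} It remains to set $x_i=1$. The one point needing care, and the only real obstacle, is legitimacy. For each fixed power $t^s$ the coefficient is a polynomial in $\mathbf{x}$, because $S(s)$ is finite. So the termwise differentiation and the evaluation at $\mathbf{x}=\mathbf 1$ are performed coefficientwise in $t$ and commute. The result is valid as an identity in $\mathbb{Q}[[t]]$, and each side equals $\sum_s h_{C_{n,\mathbf k}}(s)t^s$.

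I would also note the equivalent rational-function viewpoint. In the integrand,
\[
\frac{1}{(k-1)!}\mathrm{D}_x^{k-1}\frac{x^{k-1}}{1-x\lambda}=\frac{1}{(1-x\lambda)^{k}},
\]
and this commutes with $\CT_{\boldsymbol\lambda}$. This gives a one-line alternative proof via Lemma \ref{lem-Fxyt}.
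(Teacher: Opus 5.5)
Your proposal is correct and follows essentially the same route as the paper. Both arguments count the ways of distributing the total loop label $\alpha_i$ (the paper's $\delta_i=s-\beta_i-\beta_{i+1}$) among the $k_i$ loops as $\binom{\alpha_i+k_i-1}{k_i-1}$, realize this coefficient as $\frac{1}{(k_i-1)!}\mathrm{D}_{x_i}^{k_i-1}x_i^{k_i-1}$ acting on $x_i^{\alpha_i}$, and factor the operators out of the sum to recover $F_n(\mathbf{x};t)$. Your remark that setting $\mathbf{x}=\mathbf{1}$ is legitimate because each $t^s$-coefficient is a polynomial in $\mathbf{x}$ makes explicit a point the paper leaves implicit.
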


\begin{proof}
Recall that $\alpha_i = \sum_{j=1}^{k_i} \alpha_{i,j}$. Evaluating $F_\mathbf{k}(\mathbf{x}; t)$ at $\mathbf{x} = \mathbf{1}$, we have
\[
F_\mathbf{k}(\mathbf{x}; t) \Big|_{\mathbf{x}=\mathbf{1}} = \sum_{s \geq 0} |S_{C_{n,\mathbf{k}}}(s)| \, t^s.
\]
Expanding the cardinality of the set $S_{C_{n,\mathbf{k}}}(s)$, we obtain
\[
\sum_{s \geq 0} \sum_{\substack{0 \leq \beta_j  + \beta_{j+1} \leq s \\ 1 \leq j \leq n}} \prod_{i=1}^n \binom{s - \beta_i - \beta_{i+1} + k_i - 1}{k_i - 1} t^s.
\]
We introduce auxiliary variables $x_1, \ldots, x_n$ and let $\delta_i = s - \beta_i - \beta_{i+1}$ (which corresponds to $\alpha_i$ in the definition of $F_{\mathbf{k}}$). Using the identity $\binom{n+k}{k} = \frac{1}{k!} \mathrm{D}_x^k x^{n+k} |_{x=1}$, we can rewrite the summand using differential operators:
\[
\sum_{s \geq 0} \sum_{\substack{\beta_j + \delta_j + \beta_{j+1} = s \\ 1 \leq j \leq n}} \left( \prod_{i=1}^n \frac{1}{(k_i - 1)!} \mathrm{D}_{x_i}^{k_i-1} x_i^{\delta_i + k_i - 1} \right) t^s \Bigg|_{x_i=1}.
\]
Factorizing the operators gives
\[
\frac{1}{\prod_{i=1}^n (k_i - 1)!} \left[ \prod_{i=1}^n \mathrm{D}_{x_i}^{k_i-1} x_i^{k_i-1} \left( \sum_{s \geq 0} \sum_{\substack{\beta_j + \delta_j + \beta_{j+1} = s \\ 1 \leq j \leq n}} \mathbf{x}^{\mathbf{\delta}} t^s \right) \right] \Bigg|_{\mathbf{x}=\mathbf{1}},
\]
where $\mathbf{\delta} = (\delta_1, \ldots, \delta_n)$. The inner sum is precisely $F_n(\mathbf{x};t)$, completing the proof.
\end{proof}

The function $F_n(\mathbf{x}; t)$ itself admits a structural decomposition, as detailed in the following lemma.

\begin{lem}\label{lem-F-decomp}
For any integer $n \ge 3$, there exists a structural decomposition of the function $F_n(\x; t)$ of the form
\begin{equation}
F_n(\x; t) = F_{\varphi_n}(\x; t) + F_{\psi_n}(\x; t),
\end{equation}
such that the following conditions are satisfied:
\begin{enumerate}
    \item[\rm (i)] The component $F_{\varphi_n}$ vanishes at the singular point in the following sense:
    \begin{equation*}
    \left[ \left( F_{\varphi_n}(\mathbf{x}; t) \big|_{\mathbf{x}=\mathbf{1}} \right) (t+1) \right]_{t=-1} = 0.
    \end{equation*}
    \item[\rm (ii)] The component $F_{\psi_n}$ captures the singular behavior and is given by:
    \begin{equation*}
    F_{\psi_n}(\mathbf{x}; t) = \begin{cases}
    0, & \text{if } n \text{ is even}, \\
    \displaystyle \frac{1}{(1-t^2)\prod_{i=1}^{n}(1-x_{i}t)}, & \text{if } n \text{ is odd}.
    \end{cases}
    \end{equation*}
\end{enumerate}
\end{lem}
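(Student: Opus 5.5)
The plan is to \emph{define} $F_{\psi_n}$ by the formula in (ii) and put $F_{\varphi_n}:=F_n-F_{\psi_n}$. Then (ii) holds by construction, and everything reduces to (i). Since $F_{\psi_n}(\mathbf 1;t)=0$ for even $n$, and $(1+t)F_{\psi_n}(\mathbf 1;t)\big|_{t=-1}=1/2^{n+1}$ for odd $n$, condition (i) is equivalent to
\[
\bigl[(1+t)F_n(\mathbf 1;t)\bigr]_{t=-1}=\begin{cases}0,& n \text{ even},\\ 2^{-(n+1)},& n\text{ odd}.\end{cases}
\]
Here $F_n(\mathbf 1;t)=\sum_s h(s)t^s$, where $h(s)$ counts $\beta\in\mathbb N^n$ with $\beta_i+\beta_{i+1}\le s$ (indices mod $n$). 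So we must show that the rational function $F_n(\mathbf 1;t)$ has at most a simple pole at $t=-1$, and identify its coefficient.

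For this I would use the geometry of the polytope $\mathcal P=\{\beta\ge 0,\ \beta_i+\beta_{i+1}\le 1\}$, so that $h(s)=\#(s\mathcal P\cap\mathbb Z^n)$.

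\textbf{Step 1: vertices of $\mathcal P$.} The constraint matrix is the edge–vertex incidence matrix of the $n$-cycle together with $\beta\ge0$. Vertices are half-integral, and the coordinates equal to $1/2$ at a vertex must support tight constraints forming an odd cycle.
\begin{itemize}
\item For $n$ even the cycle is bipartite and the matrix is totally unimodular. Hence $\mathcal P$ is integral, $h(s)$ is a polynomial, and $F_n(\mathbf 1;t)$ has poles only at $t=1$, which gives (i).
\item For $n$ odd, the only non-integral vertex is $v=(\tfrac12,\dots,\tfrac12)$, where all $n$ constraints $\beta_i+\beta_{i+1}\le1$ are tight.
\end{itemize}

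\textbf{Step 2: Brion decomposition for $n$ odd.} Work in the homogenized cone over $\mathcal P$ in the $(\beta,\delta,s)$ coordinates, keeping the $\mathbf x$-weights on $\delta$. By Brion's theorem, $F_n(\mathbf x;t)$ is the sum of the lattice-point generating functions of the vertex (tangent) cones. Integral vertices have all their ray generators at height $t^1$, so their cones contribute denominators in $(1-\text{monomial}\cdot t)$ only. These vanish at $t=-1$ to no order after specialization $\mathbf x=\mathbf 1$, i.e.\ they have no pole at $t=-1$.

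The cone at $v$ is simplicial. Its apex direction is $(1,\dots,1;\mathbf 0;2)$, of weight $t^2$. Its $n$ edge directions correspond to relaxing one tight constraint, which increases a single $\delta_i$. Each of these edge directions carries weight $x_it$ after choosing primitive generators appropriately. Hence its generating function has the form
\[
\frac{\Pi(\mathbf x,t)}{(1-t^2)\prod_{i=1}^n(1-x_it)},
\]
where $\Pi$ is the sum over the fundamental parallelepiped, whose index equals $|\det|$ of the odd-cycle matrix (namely $2$) up to the height normalization. Only the factor $1-t^2$ produces a pole at $t=-1$, and that pole is simple.

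\textbf{Step 3: the residue.} It then remains to evaluate $\Pi(\mathbf 1,-1)$ and check that it equals $1$ (up to the sign conventions of Brion). Equivalently, I would verify that the cone's contribution minus $F_{\psi_n}$ is regular at $t=-1$ once $\mathbf x=\mathbf 1$.

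As an independent check, one can extract the $(-1)^s$ part of $h(s)$ directly. Substituting $\beta=\tfrac{s}{2}\mathbf 1+\gamma$ when $s$ is even, versus the shifted lattice when $s$ is odd, shows that the parity discrepancy is a constant $c$. Then $(1+t)F_n(\mathbf 1;t)|_{t=-1}=c/2$, and we would confirm $c=2^{-n}$.

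I expect the main obstacle to be Step 2/3: pinning down the tangent cone at $v$ precisely. This means choosing lattice-primitive edge generators in the $(\beta,\delta,s)$ lattice, determining the parallelepiped points, and tracking signs so that the constant comes out exactly $2^{-(n+1)}$. If the Brion bookkeeping becomes messy, I would fall back on the constant-term representation of Lemma \ref{lem-Fxyt} with $k_i=1$. There I would eliminate $\lambda_1,\dots,\lambda_n$ by partial fractions, isolating the unique term whose denominator acquires the factor $1-t^2$ from $\bigl(1-t/(\lambda_1\cdots\lambda_n)\bigr)\prod_i(1-\lambda_i\lambda_{i+1})$. That factor arises from solving $\lambda_i\lambda_{i+1}=1$ around an odd cycle, which forces $\lambda_i^2=1$.
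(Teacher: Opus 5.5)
Your reduction is correct. With $F_{\psi_n}$ defined by (ii) and $F_{\varphi_n}:=F_n-F_{\psi_n}$, the lemma is equivalent to $[(1+t)F_n(\mathbf 1;t)]_{t=-1}$ being $0$ for even $n$ and $2^{-(n+1)}$ for odd $n$. Your even case via total unimodularity is also fine. The gap is the odd case, which is the whole content of the lemma. There are two problems.

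\emph{Brion's theorem does not produce the cone you describe.} The tangent cone of $\mathcal P$ at $v$ is spanned by the edge directions $\xi_j-v$, where the $\xi_j$ are the maximum-stable-set vertices. These directions lie at height $0$ and are not lattice vectors. Their primitive generators $2(\xi_j-v)$ carry weight $x_j^2$, not $x_jt$. The Brion term of $v$ is therefore
\[
\frac{\Pi_0(\mathbf x)+t\,\Pi_1(\mathbf x)}{(1-t^2)\prod_j(1-x_j^2)},
\]
where $\Pi_0$ and $\Pi_1$ sum $\mathbf x^{\delta}$ over $\delta\in\{0,1\}^n$ with $|\delta|$ even, respectively odd. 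Its numerator vanishes at $(\mathbf 1,-1)$ and its denominator vanishes at $\mathbf x=\mathbf 1$. Likewise, every integral-vertex Brion term has denominator factors $1-\mathbf x^{\gamma}$ with $\gamma\ne 0$. So the termwise specialization $\mathbf x=\mathbf 1$ in your Step 2, and the check ``$\Pi(\mathbf 1,-1)=1$'' in Step 3, are not meaningful.

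\emph{The decisive constant is never derived.} All three of your routes (Brion, the parity check, and partial fractions) stop at ``it remains to verify''.

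The cone you actually wrote down, generated by $(2v,2)$ and $(\xi_j,1)$, is the cone over the simplex $\Delta_n=\operatorname{conv}(v,\xi_1,\dots,\xi_n)$. That is the paper's object. It comes from a subdivision, not from Brion, and the subdivision is what your proposal is missing. The vertex $v$ is simple, and its $n$ neighbours are exactly the $\xi_j$. Hence the odd-cycle hyperplane $\sum_i\beta_i=(n-1)/2$ cuts $\mathcal P$ into $\Delta_n$ and the lattice polytope $\mathcal R_n=\operatorname{conv}(\text{integral vertices})$, which meet in $\mathcal F$. This gives $\sigma_{\mathcal P}=\sigma_{\Delta_n}+\sigma_{\mathcal R_n}-\sigma_{\mathcal F}$.

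The cone over $\Delta_n$ is unimodular (index $1$, not $2$). In $(\alpha,s)$-coordinates the lattice is $\{s+\sum_i\alpha_i\equiv 0 \pmod 2\}$, of index $2$ in $\mathbb Z^{n+1}$. The generators $(e_j,1)$ and $(\mathbf 0,2)$ have determinant $2$, so they form a lattice basis. Hence $\sigma_{\Delta_n}|_{\mathbf y=\mathbf 1}$ is exactly $F_{\psi_n}$. Then $F_{\varphi_n}=\sigma_{\mathcal R_n}-\sigma_{\mathcal F}$ has only denominators $1-\mathbf x^{\alpha}t$, and no residue needs to be computed. This $F_{\varphi_n}$ is regular at $t=-1$ for all $\mathbf x$ near $\mathbf 1$, which is what the later application of $\mathrm D_{x_i}^{k_i-1}$ actually needs.

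Alternatively, you can salvage Brion by keeping $\mathbf x$ generic. The $v$-term above equals
\[
\frac{1}{2(1-t)\prod_j(1-x_j)}+\frac{1}{2(1+t)\prod_j(1+x_j)},
\]
and the integral-vertex terms have no pole at $t=-1$. So $(1+t)F_n(\mathbf x;t)|_{t=-1}=\frac{1}{2\prod_j(1+x_j)}$. Only then set $\mathbf x=\mathbf 1$ to get $2^{-(n+1)}$. This step is legitimate because the rays of the cone over $\mathcal P$ make $(1-t^2)\prod_S(1-\mathbf x^{\alpha^S}t)\,F_n$ a polynomial.
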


The proof of Lemma \ref{lem-F-decomp} is deferred to Subsection \ref{subsec-proof-theohck}.

\subsection{Vertices of the Polytope $\mathcal{P}(C_{n,1})$}

The geometric properties of the polytope $\mathcal{P}(C_{n,1})$ are fundamental for the subsequent decomposition of its generating function. In this subsection, we provide a rigorous characterization of its vertex set.

\begin{defn}
The polytope $\mathcal{P}(C_{n,\mathbf{k}}) \subset \mathbb{R}^{d}$ (where $d = \sum_{i=1}^n k_i + n$) associated with the parameter $\mathbf{k}=(k_1,\dots,k_n)\in \mathbb{Z}_{\ge 1}^n$ is defined by the following system of linear constraints:
\[
\mathcal{P}(C_{n,\mathbf{k}}) = \left\{ (\boldsymbol{\alpha},\boldsymbol{\beta}) \in \mathbb{R}^{d} \;\middle|\;
\begin{aligned}
    &\beta_{i} + \alpha_{i} + \beta_{i+1} = 1, && i = 1,\dots,n, \\
    &\alpha_{i,j} \ge 0, && i = 1,\dots,n, \ j = 1,\dots,k_i, \\
    &\beta_i \ge 0, && i = 1,\dots,n,
\end{aligned}
\right\}
\]
where the indices are interpreted modulo $n$ (such that $\beta_{n+1} = \beta_1$), and $\alpha_i = \sum_{j=1}^{k_i} \alpha_{i,j}$ represents the total weight assigned to the self-loops at vertex $v_i$. The coordinate vectors are given by $\boldsymbol{\alpha}=(\alpha_{1,1},\dots,\alpha_{1,k_1}, \dots, \alpha_{n,1}, \dots, \alpha_{n,k_n})$ and $\boldsymbol{\beta}=(\beta_1,\dots,\beta_n)$.
\end{defn}

The vertices of $\mathcal{P}(C_{n,1})$ can be completely described in terms of the \textit{stable sets} of the cycle graph $C_{n,0}$. Let $V(C_{n,0})=\{v_1,\dots,v_n\}$ denote the vertex set of an $n$-cycle. Recall that a subset $S \subseteq V(C_{n,0})$ is a stable set (or independent set) if no two vertices in $S$ are adjacent.

\begin{theo}\label{thm:vertices}
The vertex set of $\mathcal{P}(C_{n,1})$ consists of two distinct families:
\begin{enumerate}
    \item \textbf{Type I (Integral Vertices):}
    There is a bijection between the integral vertices of $\mathcal{P}(C_{n,1})$ and the stable sets $S$ of $C_{n,0}$. For a given stable set $S$, the corresponding vertex $v^{S}=(\alpha_1^{S},\dots,\alpha_n^{S},\beta_1^{S},\dots,\beta_n^{S})$ is defined by:
    \[
    \beta_i^{S} = \begin{cases} 1, & v_i \in S \\ 0, & v_i \notin S \end{cases}
    \quad \text{and} \quad
    \alpha_i^{S} = \begin{cases} 0, & \{v_i, v_{i+1}\} \cap S \neq \emptyset \\ 1, & \{v_i, v_{i+1}\} \cap S = \emptyset \end{cases}
    \]
    for $i=1,\dots,n$, with indices modulo $n$.

    \item \textbf{Type II (Fractional Vertex):}
    If and only if $n$ is odd, $\mathcal{P}(C_{n,1})$ possesses a unique fractional vertex:
    \[
    v^{\mathrm{frac}} = \Bigl( \underbrace{0, \dots, 0}_{n}, \underbrace{\tfrac{1}{2}, \dots, \tfrac{1}{2}}_{n} \Bigr).
    \]
\end{enumerate}
\end{theo}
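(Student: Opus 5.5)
The plan is to eliminate the loop variables and reduce to the edge relaxation of the stable set polytope of the $n$-cycle. Since $k_i=1$, the equalities $\beta_i+\alpha_i+\beta_{i+1}=1$ let us solve
\[
\alpha_i=1-\beta_i-\beta_{i+1}.
\]
Hence the linear map $(\boldsymbol\alpha,\boldsymbol\beta)\mapsto\boldsymbol\beta$ is an affine bijection from $\mathcal{P}(C_{n,1})$ onto
\[
Q_n=\{\boldsymbol\beta\in\mathbb{R}^n : \beta_i\ge 0,\ \beta_i+\beta_{i+1}\le 1,\ i=1,\dots,n\},
\]
with indices taken modulo $n$. Affine bijections preserve vertices, so it suffices to determine the vertices of $Q_n$ and then recover $\alpha_i=1-\beta_i-\beta_{i+1}$. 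For a $0/1$ vector $\boldsymbol\beta=\chi_S$, this recovery gives exactly $\alpha_i^S$ as stated: $\alpha_i=1$ if neither $v_i$ nor $v_{i+1}$ lies in $S$, and $\alpha_i=0$ otherwise.

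\textbf{Step 1: the integral vertices.} The $0/1$ points of $Q_n$ are exactly the characteristic vectors $\chi_S$ of stable sets $S$ of $C_{n,0}$, because $\beta_i+\beta_{i+1}\le 1$ forbids two adjacent ones. Each such point is a vertex, since $Q_n\subseteq[0,1]^n$ and every $0/1$ point is a vertex of the cube. Conversely, every integral point of $Q_n$ is $0/1$. This gives the bijection in Type I.

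\textbf{Step 2: the non-integral vertices (main step).} Let $\boldsymbol\beta$ be a vertex of $Q_n$ with some coordinate strictly between $0$ and $1$, and let $H=\{i: 0<\beta_i<1\}\neq\emptyset$. Call an edge $\{i,i+1\}$ tight if $\beta_i+\beta_{i+1}=1$.
\begin{itemize}
\item If $i\in H$ and the edge $\{i,i+1\}$ is tight, then $i+1\in H$ as well. So the tight edges inside $H$ form a union of paths, or the whole cycle when $H=\{1,\dots,n\}$ and every edge is tight.
\item Suppose some component $K$ of ($H$, tight edges) is a path. Define $\boldsymbol\beta^{\pm}$ by adding $\pm\varepsilon$ and $\mp\varepsilon$ alternately along $K$, and leaving all other coordinates unchanged.
\item For small $\varepsilon$, both $\boldsymbol\beta^{\pm}$ stay in $Q_n$. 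Tight edges inside $K$ keep their sum equal to $1$. Edges leaving $K$ are non-tight, so they have slack. Coordinates in $K$ stay in $(0,1)$.
\item Then $\boldsymbol\beta=\tfrac12(\boldsymbol\beta^++\boldsymbol\beta^-)$, contradicting that $\boldsymbol\beta$ is a vertex.
\end{itemize}
Therefore $H=\{1,\dots,n\}$ and all $n$ edge constraints are tight, i.e. $\beta_i+\beta_{i+1}=1$ for every $i$.

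This system forces $\beta_i=c$ for odd $i$ and $\beta_i=1-c$ for even $i$, going around the cycle.
\begin{itemize}
\item If $n$ is even, the solutions form a segment with $c\in[0,1]$. Its interior points are not vertices, and its endpoints are integral, so there is no fractional vertex.
\item If $n$ is odd, closing the cycle forces $c=1-c$, so $c=\tfrac12$. The point $\boldsymbol\beta=(\tfrac12,\dots,\tfrac12)$ is the unique solution of $n$ linearly independent tight constraints (the circulant system has determinant $2\neq0$ for odd $n$), hence it is a vertex.
\end{itemize}
Its lift has $\alpha_i=1-\tfrac12-\tfrac12=0$ for all $i$, which is $v^{\mathrm{frac}}$.

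I expect the perturbation argument in Step 2 to be the only delicate point. Specifically, one must check that the alternating perturbation preserves every tight constraint and never violates $\beta_i\ge0$. This is guaranteed by choosing $K$ maximal, so that the boundary edges of $K$ are non-tight, and by $K\subseteq H$, so that all coordinates in $K$ are strictly between $0$ and $1$.
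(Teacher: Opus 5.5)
Your proof is correct. It begins exactly as the paper's does: eliminating $\alpha_i=1-\beta_i-\beta_{i+1}$ identifies $\mathcal{P}(C_{n,1})$ affinely with $\mathcal{Q}_n=\operatorname{FRAC}(C_{n,0})$. The two arguments split at the next step.

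The paper treats the vertex set of $\operatorname{FRAC}(C_{n,0})$ as known and cites Schrijver. It adds only two checks: that $(\tfrac12,\dots,\tfrac12)$ satisfies $n$ linearly independent tight constraints when $n$ is odd, and that $\operatorname{FRAC}$ of a bipartite graph is integral when $n$ is even. The claim that this is the \emph{only} fractional vertex (for instance, that no vertex mixes entries $0,\tfrac12,1$) rests entirely on the citation.

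You prove the classification directly instead:
\begin{enumerate}
\item $0/1$ points are vertices because they are vertices of the cube $[0,1]^n\supseteq\mathcal{Q}_n$.
\item The alternating perturbation along a path component of the tight-edge graph on $H$ shows that any non-integral vertex has every coordinate in $(0,1)$ and every edge constraint tight.
\item That leaves only the all-halves point for odd $n$, and an interior point of a segment (hence not a vertex) for even $n$.
\end{enumerate}
This is the standard argument behind half-integrality of fractional stable-set polytopes, specialized to a cycle, whose only odd cycle is the whole graph. One case your bullet list does not name explicitly is a non-tight edge joining the two ends of a path component $K$ that spans all $n$ vertices; it is harmless for the same reason, since it has slack.

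The paper's version is shorter. Yours is self-contained and actually establishes the ``unique'' and ``if and only if'' parts of Type II, including the even case, without appealing to integrality for bipartite graphs.
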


\begin{proof}
We identify the vertices by considering the linear projection $\pi: \mathbb{R}^{2n} \to \mathbb{R}^n$ onto the $\beta$-coordinates, defined by $y_i = \beta_i$. From the governing equations $\beta_i + \alpha_i + \beta_{i+1} = 1$, we can express the $\alpha$-variables as $\alpha_i = 1 - y_i - y_{i+1}$. Substituting these into the non-negativity constraints $\alpha_i \ge 0$ and $\beta_i \ge 0$ yields the following equivalent system in terms of $y$:
\begin{enumerate}
    \item $y_i \ge 0$, for all $i$;
    \item $y_i + y_{i+1} \le 1$, for $i = 1, \dots, n \pmod n$.
\end{enumerate}
Thus, $\mathcal{P}(C_{n,1})$ is affinely isomorphic to the polytope:
\[
\mathcal{Q}_n = \{ y \in \mathbb{R}^n \mid y \ge 0, \, y_i + y_{i+1} \le 1, \, i=1,\dots,n \}.
\]
This $\mathcal{Q}_n$ is precisely the \textbf{fractional stable-set polytope} of the cycle $C_{n,0}$, denoted by $\operatorname{FRAC}(C_{n,0})$. Its vertex structure is well-established (see \cite{Schrijver}):
\begin{enumerate}
    \item \textbf{Integral vertices of $\mathcal{Q}_n$:} These are the characteristic vectors $\chi^S$ of the stable sets $S$ of $C_{n,0}$. A $0$-$1$ vector $y$ satisfies $y_i + y_{i+1} \le 1$ if and only if its support is a stable set.
    \item \textbf{Fractional vertex of $\mathcal{Q}_n$:} If $n$ is odd, the point $y^* = (\tfrac{1}{2}, \dots, \tfrac{1}{2})$ is a vertex, as it satisfies $n$ linearly independent constraints $y_i + y_{i+1} = 1$ with equality. If $n$ is even, the cycle is bipartite, making $\operatorname{FRAC}(C_{n,0})$ an integral polytope with no fractional vertices.
\end{enumerate}
Lifting these vertices back to $\mathbb{R}^{2n}$ via $\alpha_i = 1 - y_i - y_{i+1}$ yields:
\begin{itemize}
    \item For the integral case, $\alpha_i^S = 1$ if both $v_i, v_{i+1} \notin S$, and $\alpha_i^S = 0$ otherwise.
    \item For the fractional case ($n$ odd), $\beta_i = \tfrac{1}{2}$ leads to $\alpha_i = 1 - \tfrac{1}{2} - \tfrac{1}{2} = 0$.
\end{itemize}
This completes the description of the vertex set.
\end{proof}

\begin{exam}\label{ex:n=3}
Consider $n=3$. The stable sets of $C_{3,0}$ are $\emptyset, \{v_1\}, \{v_2\},$ and $\{v_3\}$. The corresponding five vertices of $\mathcal{P}(C_{3,1})$ are:
\begin{enumerate}
    \item $S = \emptyset$: $(\boldsymbol{\alpha}, \boldsymbol{\beta}) = (1, 1, 1, 0, 0, 0)$;
    \item $S = \{v_1\}$: $(\boldsymbol{\alpha}, \boldsymbol{\beta}) = (0, 1, 0, 1, 0, 0)$;
    \item $S = \{v_2\}$: $(\boldsymbol{\alpha}, \boldsymbol{\beta}) = (0, 0, 1, 0, 1, 0)$;
    \item $S = \{v_3\}$: $(\boldsymbol{\alpha}, \boldsymbol{\beta}) = (1, 0, 0, 0, 0, 1)$;
    \item $v^{\mathrm{frac}}$: $(\boldsymbol{\alpha}, \boldsymbol{\beta}) = (0, 0, 0, \tfrac{1}{2}, \tfrac{1}{2}, \tfrac{1}{2})$.
\end{enumerate}
\end{exam}

\subsection{Polyhedral Decomposition and Explicit Expression of the Generating Function}

Having characterized the vertices of $\mathcal{P}(C_{n,1})$, we now investigate its geometric decomposition. This structural analysis is the key to obtaining an explicit rational form for its lattice-point generating function.

We begin with two elementary lemmas regarding stable sets of cycle graphs.

\begin{lem}\label{lem:max-stable-set-count}
Let $C_{n,0}$ be a cycle graph on $n \ge 3$ vertices. The number of maximum stable sets (independent sets of maximum cardinality) in $C_{n,0}$ is:
\[
\begin{cases}
2, & \text{if } n \text{ is even,}\\
n, & \text{if } n \text{ is odd.}
\end{cases}
\]
\end{lem}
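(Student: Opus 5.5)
First we determine the independence number of the $n$-cycle $C_{n,0}$, which is $\alpha(C_{n,0})=\lfloor n/2\rfloor$. The upper bound follows by summing the edge inequalities: if $S$ is stable then each edge $\{v_i,v_{i+1}\}$ contains at most one vertex of $S$. Every vertex lies on exactly two edges, so counting incidences gives $2|S|\le n$. The lower bound comes from the set $\{v_1,v_3,\dots\}$ of odd-indexed vertices, truncated so that $v_1$ and $v_n$ are not both chosen when $n$ is odd.

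\emph{Even case, $n=2r$.} A stable set with $|S|=r$ makes the incidence inequality $2|S|\le n$ an equality. Hence every edge $\{v_i,v_{i+1}\}$ contains exactly one vertex of $S$. Membership in $S$ must therefore alternate around the cycle, and $S$ is determined entirely by whether $v_1\in S$. This forces $S$ to be either the set of odd-indexed vertices or the set of even-indexed vertices. Both are stable, and they are distinct, so there are exactly $2$ maximum stable sets.

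\emph{Odd case, $n=2r+1$.} Here a stable set with $|S|=r$ covers exactly $2r=n-1$ edges, so exactly one edge $\{v_j,v_{j+1}\}$ misses $S$. Deleting that edge leaves a path on $n=2r+1$ vertices, namely $v_{j+1},v_{j+2},\dots,v_j$ in cyclic order, every one of whose edges meets $S$ exactly once. By the same alternation argument, $S$ is determined by the status of the first path vertex $v_{j+1}$. Since $v_{j+1}\notin S$, the set $S$ consists of every second vertex starting from $v_{j+2}$, which is $r$ vertices ending at $v_{j-1}$. Conversely, each choice of $j$ yields a stable set of size $r$. The map from $S$ to its unique uncovered edge is therefore a bijection with the $n$ edges of the cycle, giving exactly $n$ maximum stable sets.

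The only delicate point is checking, in the odd case, that each edge is uncovered by exactly one maximum stable set. This is the uniqueness step, and it follows from the forced alternation along the path once the endpoint status is fixed. The rest of the argument is routine counting.
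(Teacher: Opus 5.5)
Your argument is correct, but it takes a different route from the paper's, mainly in the odd case. For the even case, the paper just asserts that the two colour classes of the bipartite cycle are the only maximum stable sets. For $n=2m+1$, it substitutes $k=m$ into Kaplansky's formula $\frac{n}{n-k}\binom{n-k}{k}$ for the number of $k$-subsets of $C_n$ with no two adjacent vertices, obtaining $\frac{n}{m+1}\binom{m+1}{m}=n$.

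You instead derive everything from the single incidence count $2|S|\le n$:
\begin{itemize}
\item For $n$ even, its equality case forces strict alternation. This actually justifies the paper's uniqueness claim, which bipartiteness alone does not give (the path on four vertices has three maximum stable sets).
\item For $n$ odd, the near-equality leaves exactly one uncovered edge and forces alternation along the remaining path.
\end{itemize}

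The paper's route is shorter, but it relies on a quoted enumeration formula and produces only the number $n$. Yours is self-contained and produces the explicit bijection $S\mapsto\{v_j,v_{j+1}\}$ between maximum stable sets and edges. That bijection is exactly the structural fact the paper later uses, without separate proof, in Lemma~\ref{lem:simplex}: there it indexes the vertices $\xi_1^n,\dots,\xi_n^n$ by their uncovered edge and concludes that their $\alpha$-projections are precisely the $n$ standard basis vectors $\mathbf{e}_1,\dots,\mathbf{e}_n$.
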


\begin{proof}
Let the vertices be $\{v_1, \dots, v_n\}$ in cyclic order and $m = \lfloor n/2 \rfloor$.
If $n = 2m$, the cycle is bipartite, and the only two maximum stable sets are the two partition classes $\{v_1, v_3, \dots, v_{2m-1}\}$ and $\{v_2, v_4, \dots, v_{2m}\}$.
If $n = 2m+1$, the number of stable sets of size $k$ in $C_n$ is given by Kaplansky's formula $\frac{n}{n-k}\binom{n-k}{k}$. Substituting $k=m=\frac{n-1}{2}$ yields $\frac{2k+1}{k+1}\binom{k+1}{k} = 2k+1 = n$.
\end{proof}

For odd $n$, the following hyperplane is central to our decomposition.

\begin{lem}\label{lem:vertices-on-hyperplane-beta}
Let $n \ge 3$ be odd. Exactly $n$ vertices of $\mathcal{P}(C_{n,1})$ lie on the hyperplane
\[
H: \sum_{i=1}^n \beta_i = \frac{n-1}{2}.
\]
These are precisely the integral vertices $v^{S}$ corresponding to the maximum stable sets $S$ of $C_{n,0}$.
\end{lem}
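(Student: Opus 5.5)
The plan is to use the vertex description in Theorem~\ref{thm:vertices}: the vertices of $\mathcal{P}(C_{n,1})$ are the integral vertices $v^S$, one for each stable set $S$ of $C_{n,0}$, together with the fractional vertex $v^{\mathrm{frac}}$. Since $n$ is odd, $v^{\mathrm{frac}}$ is present, and we must decide which of these vertices lie on $H$.

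\textbf{The fractional vertex.} Its $\beta$-coordinates sum to $\sum_{i=1}^n \beta_i = n/2$. Since $n/2 \neq (n-1)/2$, the vertex $v^{\mathrm{frac}}$ is not on $H$.

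\textbf{The integral vertices.} For $v^S$ we have $\beta_i^S = 1$ exactly when $v_i\in S$, so $\sum_{i=1}^n\beta_i^S = |S|$. Hence $v^S\in H$ if and only if $|S| = (n-1)/2$.

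\textbf{Identifying the maximum stable sets.} A stable set in an odd cycle has at most $\lfloor n/2\rfloor=(n-1)/2$ elements. To see this, sum the inequalities $\chi^S_i+\chi^S_{i+1}\le 1$ over all $i$; this gives $2|S|\le n$, and integrality yields $|S|\le (n-1)/2$. The bound is attained, for example by $\{v_1,v_3,\dots,v_{n-2}\}$. So the stable sets of size $(n-1)/2$ are exactly the maximum stable sets.

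\textbf{Counting.} By Lemma~\ref{lem:max-stable-set-count} there are exactly $n$ maximum stable sets when $n$ is odd. The map $S\mapsto v^S$ is injective, because $S$ is recovered as the support of $\beta$. Therefore exactly $n$ vertices of $\mathcal{P}(C_{n,1})$ lie on $H$, and they are the $v^S$ with $S$ maximum.

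The only step that takes any care is confirming that $v^{\mathrm{frac}}$ is excluded and that no non-maximum integral vertex lies on $H$. Both follow directly from the computation $\sum_i\beta_i = |S|$ and from the value $n/2$ at the fractional vertex, so no substantive obstacle is expected. If one wants to avoid relying on Kaplansky's formula inside Lemma~\ref{lem:max-stable-set-count}, the count can be checked directly instead. A maximum stable set in $C_{2m+1}$ leaves exactly one pair of consecutive vertices $v_j,v_{j+1}$ outside it, and the set is then determined by that pair. There are $n$ choices of the pair, which gives $n$ sets.
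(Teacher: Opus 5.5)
Your proof is correct and follows the paper's argument. Like the paper, you compute $\sum_i\beta_i=|S|$ at each integral vertex $v^S$ and $n/2$ at $v^{\mathrm{frac}}$, then invoke the count of $n$ maximum stable sets. Your extra details are all sound and only make explicit what the paper leaves implicit: the bound $|S|\le (n-1)/2$, the injectivity of $S\mapsto v^S$, and the direct gap-counting alternative to Kaplansky's formula.
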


\begin{proof}
Let $n=2k+1$. By Theorem~\ref{thm:vertices}, for any integral vertex $v^S$, we have $\sum \beta_i^S = |S|$. Thus, $v^S \in H$ if and only if $|S| = k$, which occurs exactly for the $n$ maximum stable sets. For the unique fractional vertex $v^{\mathrm{frac}}$, we have $\sum \beta_i^{\mathrm{frac}} = n/2 = k + 1/2$, so $v^{\mathrm{frac}} \notin H$.
\end{proof}

The vertices on $H$, together with the fractional vertex, form a simplex that captures the non-integral structure of the polytope.

\begin{lem}\label{lem:simplex}
Let $n \ge 3$ be odd, and let $\xi_1^n, \dots, \xi_n^n$ be the vertices of $\mathcal{P}(C_{n,1})$ lying on $H$. Then:
\begin{enumerate}
    \item The fractional vertex $v^{\mathrm{frac}}$ lies strictly on the side $\sum \beta_i > (n-1)/2$.
    \item All other integral vertices $v^S$ (where $|S| < (n-1)/2$) lie strictly on the side $\sum \beta_i < (n-1)/2$.
    \item The $n+1$ points $\{\xi_1^n, \dots, \xi_n^n, v^{\mathrm{frac}}\}$ are affinely independent and form an $n$-dimensional simplex $\Delta_n \subset \mathcal{P}(C_{n,1})$.
\end{enumerate}
\end{lem}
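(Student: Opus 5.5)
Parts (1) and (2) follow from computing the linear functional $\ell(\boldsymbol{\alpha},\boldsymbol{\beta})=\sum_i\beta_i$ at the vertices listed in Theorem~\ref{thm:vertices}. Write $n=2k+1$. Then $\ell(v^{\mathrm{frac}})=n/2=k+\tfrac12>k$, and $\ell(v^S)=|S|$ for every integral vertex. The maximum stable set size in $C_{n,0}$ is $k$, so every stable set other than the $n$ maximum ones counted in Lemma~\ref{lem:max-stable-set-count} has $|S|\le k-1<k$. Combined with Lemma~\ref{lem:vertices-on-hyperplane-beta}, this settles both strict inequalities.

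For (3), I will use the affine isomorphism $\mathcal{P}(C_{n,1})\cong\mathcal{Q}_n\subset\mathbb{R}^n$ from the proof of Theorem~\ref{thm:vertices}. Affine independence is preserved under this map, so it suffices to work with the $\beta$-projections. I first identify the maximum stable sets explicitly. For odd $n$, each one leaves exactly one pair of cyclically consecutive vertices uncovered, so it is determined by an index $j$: let $S_j$ be the unique maximum stable set with $v_j,v_{j+1}\notin S_j$, namely $S_j=\{v_{j+2},v_{j+4},\dots,v_{j-1}\}$. Let $\chi_j=\chi^{S_j}$ and $c=(\tfrac12,\dots,\tfrac12)$.

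Affine independence of $\{\chi_1,\dots,\chi_n,c\}$ is equivalent to linear independence of the $n$ vectors $\chi_j-c$ in $\mathbb{R}^n$. I plan two steps.

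\textbf{Step (a): the $\chi_j$ are linearly independent.} Let $M$ be the matrix with rows $\chi_j$. It is a circulant matrix, so its eigenvalues are $\sum_{m\in T}\omega^{m}$ with $T=\{2,4,\dots,n-1\}$ and $\omega$ ranging over the $n$-th roots of unity. At $\omega=1$ this sum is $k\neq0$. For $\omega\ne1$ it is a geometric sum equal to $\omega^2(1-\omega^{2k})/(1-\omega^2)$. This is nonzero because $\omega^{2k}=\omega^{-1}\neq1$ and $\omega^2\ne1$, the latter since $n$ is odd. Hence $M$ is invertible.

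\textbf{Step (b): passing from the $\chi_j$ to the $\chi_j-c$.} The points $\chi_j$ span the hyperplane $\{\sum y_i=k\}$ affinely, and $c$ lies off it by (1). A point strictly off the hyperplane containing $n$ affinely independent points cannot be an affine combination of them, so together they are $n+1$ affinely independent points. For this I need the $\chi_j$ themselves to be affinely independent, which follows from their linear independence in Step (a).

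It remains to show $\Delta_n=\mathrm{conv}\{\xi_j,v^{\mathrm{frac}}\}\subset\mathcal{P}(C_{n,1})$. This is immediate from convexity, since all the points are vertices of the polytope. Its dimension is $n$, which matches $\dim\mathcal{P}(C_{n,1})=n$ because $\mathcal{Q}_n$ contains a small cube near $0$.

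The main obstacle is the nonsingularity in Step (a). The circulant eigenvalue computation is the argument I intend to use, with care needed to justify $\omega^{2k}\ne1$ for every nontrivial $n$-th root of unity $\omega$. An alternative for Step (a) is to note that $\chi_j+\chi_{j+1}$ equals the all-ones vector minus a few unit vectors, and to recover the unit vectors from these combinations.
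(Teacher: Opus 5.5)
Your proof is correct. Parts (1) and (2) match the paper's, but for (3) you take a different route.

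The paper projects onto the $\alpha$-coordinates rather than the $\beta$-coordinates. For odd $n$ the system $\beta_i+\beta_{i+1}=1-\alpha_i$ is uniquely solvable, so this projection is an affine isomorphism. A maximum stable set covers $2k$ of the $n=2k+1$ edges, so exactly one $\alpha_j$ equals $1$. Hence $\xi_j^n\mapsto\mathbf{e}_j$ and $v^{\mathrm{frac}}\mapsto\mathbf{0}$, and affine independence of $\{\mathbf{0},\mathbf{e}_1,\dots,\mathbf{e}_n\}$ is immediate. That route needs no eigenvalues and no appeal to part (1). The same edge count also justifies your unproved claim that each maximum stable set leaves exactly one consecutive pair uncovered.

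Your $\beta$-projection is an isomorphism for every $n$, so oddness has to enter elsewhere, via $\omega^2\neq 1$. That is why you need the circulant computation plus the separation step. The computation is right; each nontrivial eigenvalue in fact simplifies to $-\omega/(1+\omega)$.

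Your fallback for Step (a) is shorter and is essentially the paper's idea written in $\beta$-coordinates. One has $\chi_j+\chi_{j+1}=\mathbf{1}-\mathbf{e}_{j+1}$ (exactly one unit vector, not a few). Together with $\sum_j\chi_j=k\mathbf{1}$, this spans $\mathbb{R}^n$ in two lines.

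What your coordinates do buy is compatibility with the lattice. Integral $\beta$ forces integral $\alpha$, whereas for odd $n$ the $\alpha$-projection sends lattice points onto an index-$2$ sublattice. In your setting, $M\mathbf{1}=k\mathbf{1}$ and $\det M=k$ give $\det\begin{pmatrix}M&\mathbf{1}\\ \mathbf{1}^{\top}&2\end{pmatrix}=k(2-n/k)=-1$. This directly exhibits the unimodularity behind $P(\mathbf{z};t)=1$ in Corollary~\ref{cor-simplex}.
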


\begin{proof}
Statements (1) and (2) follow directly from the summation $|S| \le \frac{n-1}{2}$ and $\sum \beta_i^{\mathrm{frac}} = n/2$.
For (3), consider the projection $\pi: (\boldsymbol{\alpha}, \boldsymbol{\beta}) \mapsto \boldsymbol{\alpha}$. Since $n$ is odd, the system $\beta_i + \beta_{i+1} = 1 - \alpha_i$ has a unique solution for $\boldsymbol{\beta}$ given $\boldsymbol{\alpha}$, making $\pi$ an affine isomorphism.
For a maximum stable set $S$ of size $k = (n-1)/2$, its vertices cover $2k$ edges. The remaining edge $(v_j, v_{j+1})$ is the only one with $\alpha_j = 1$. Thus, $\pi(\xi_j^n)$ is the standard basis vector $\mathbf{e}_j \in \mathbb{R}^n$. For the fractional vertex, $\pi(v^{\mathrm{frac}}) = \mathbf{0}$ since all $\alpha_i = 0$.
The set $\{\mathbf{0}, \mathbf{e}_1, \dots, \mathbf{e}_n\}$ is affinely independent in $\mathbb{R}^n$; by isomorphism, so is $\{\xi_1^n, \dots, \xi_n^n, v^{\mathrm{frac}}\}$.
\end{proof}

\begin{defn}
The lattice-point generating function of the $s$-th dilation of a polytope $\mathcal{P} \subset \mathbb{R}^d$ is
\[
\sigma_{\mathcal{P}}(\mathbf{z};t) := \sum_{s \ge 0} \left( \sum_{\mathbf{m} \in s\mathcal{P} \cap \mathbb{Z}^d} \mathbf{z}^{\mathbf{m}} \right) t^s,
\]
where $\mathbf{z}^{\mathbf{m}} = z_1^{m_1} \cdots z_d^{m_d}$.
\end{defn}

\begin{theo}[Ehrhart Series of a Simplex \cite{Beck}]\label{thm-sigma}
Let $\Delta \subset \mathbb{R}^d$ be a simplex with affinely independent rational vertices $\mathbf{w}_0, \dots, \mathbf{w}_d$. Represent each vertex as $\mathbf{w}_i = \mathbf{v}_i / q_i$ in lowest terms ($q_i \in \mathbb{Z}_{>0}$). Then
\[
\sigma_{\Delta}(\mathbf{z};t) = \frac{P(\mathbf{z};t)}{\prod_{i=0}^d (1 - \mathbf{z}^{\mathbf{v}_i} t^{q_i})},
\]
where $P(\mathbf{z};t) = \sum_{(\mathbf{m}, h) \in \Pi \cap \mathbb{Z}^{d+1}} \mathbf{z}^{\mathbf{m}} t^h$ and $
          \Pi=\Bigl\{\sum_{i=0}^{d}\lambda_i(\mathbf{v}_i,q_i)\;:\;0\le\lambda_i<1\Bigr\}
          $ is the fundamental parallelepiped spanned by the homogenized vertices $(\mathbf{v}_i, q_i)$.
\end{theo}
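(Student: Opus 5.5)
The plan is the standard cone-over-the-simplex argument. First homogenize: for each $s\ge 0$ and each lattice point $\mathbf{m}\in s\Delta\cap\mathbb{Z}^d$, consider $(\mathbf{m},s)\in\mathbb{Z}^{d+1}$. These pairs are exactly the lattice points of the cone
\[
\mathcal{K}=\Bigl\{\textstyle\sum_{i=0}^d \mu_i(\mathbf{w}_i,1):\mu_i\ge 0\Bigr\}.
\]
Indeed, $(\mathbf{x},s)\in\mathcal{K}$ means $\sum\mu_i=s$ and $\mathbf{x}=\sum\mu_i\mathbf{w}_i$, which says precisely that $\mathbf{x}\in s\Delta$. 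Hence
\[
\sigma_\Delta(\mathbf{z};t)=\sum_{(\mathbf{m},h)\in\mathcal{K}\cap\mathbb{Z}^{d+1}}\mathbf{z}^{\mathbf{m}}t^h .
\]
Since $q_i>0$, we may replace each generator $(\mathbf{w}_i,1)$ by the integral vector $(\mathbf{v}_i,q_i)=q_i(\mathbf{w}_i,1)$; this rescaling does not change $\mathcal{K}$.

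Next, use affine independence. Because $\mathbf{w}_0,\dots,\mathbf{w}_d$ are affinely independent, the vectors $(\mathbf{w}_i,1)$, and hence the vectors $(\mathbf{v}_i,q_i)$, are linearly independent in $\mathbb{R}^{d+1}$. So $\mathcal{K}$ is a simplicial cone, and every point $\mathbf{p}\in\mathcal{K}$ has a unique expression $\mathbf{p}=\sum c_i(\mathbf{v}_i,q_i)$ with $c_i\ge0$. Write $c_i=\lfloor c_i\rfloor+\{c_i\}$. This gives a unique decomposition
\[
\mathbf{p}=\boldsymbol{\pi}+\sum_i n_i(\mathbf{v}_i,q_i),\qquad n_i\in\mathbb{N},\ \boldsymbol{\pi}\in\Pi .
\]
The key observation is that $\mathbf{p}$ is a lattice point if and only if $\boldsymbol{\pi}$ is, because $\sum n_i(\mathbf{v}_i,q_i)\in\mathbb{Z}^{d+1}$. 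Conversely, every pair consisting of some $\boldsymbol{\pi}\in\Pi\cap\mathbb{Z}^{d+1}$ and some $(n_0,\dots,n_d)\in\mathbb{N}^{d+1}$ produces a lattice point of $\mathcal{K}$. Uniqueness of the coordinates $c_i$ makes this correspondence a bijection
\[
\mathcal{K}\cap\mathbb{Z}^{d+1}\;\longleftrightarrow\;(\Pi\cap\mathbb{Z}^{d+1})\times\mathbb{N}^{d+1}.
\]

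Finally, sum over the bijection. Translating by $n_i(\mathbf{v}_i,q_i)$ multiplies the monomial by $(\mathbf{z}^{\mathbf{v}_i}t^{q_i})^{n_i}$, so the sum factors as
\[
P(\mathbf{z};t)\prod_{i=0}^d\sum_{n_i\ge0}(\mathbf{z}^{\mathbf{v}_i}t^{q_i})^{n_i}.
\]
Each geometric series is valid as a formal power series because $q_i\ge1$. This yields the stated formula, and $P$ is a polynomial since $\Pi$ is bounded.

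The only delicate step is the bijection, specifically the claim that lattice points of $\mathcal{K}$ correspond exactly to lattice points of $\Pi$. This rests on the generators being integral, which is why one uses $(\mathbf{v}_i,q_i)$ rather than $(\mathbf{w}_i,1)$. The lowest-terms condition on $\mathbf{w}_i=\mathbf{v}_i/q_i$ is not needed for correctness; it only makes the denominator minimal.
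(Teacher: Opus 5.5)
Your proof is correct, and it is the standard argument for this result: cone over $\Delta$, then tile the simplicial cone by translates of $\Pi$ by nonnegative integer combinations of the integral generators $(\mathbf{v}_i,q_i)$. The paper gives no proof of its own, simply quoting the theorem from \cite{Beck}, and your argument relies only on linear independence of the homogenized vertices, so it also covers the lower-dimensional simplex $\Delta_n\subset\mathbb{R}^{2n}$ (with $n+1$ vertices) to which the paper applies the theorem in Corollary~\ref{cor-simplex}, a case the statement's hypothesis of $d+1$ vertices in $\mathbb{R}^d$ does not literally cover.
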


We now apply Theorem~\ref{thm-sigma} to the simplex $\Delta_n$. Its vertices consist of the $n$ integral points $\xi_j^n$ (with denominator $q_j=1$) and the unique fractional point $v^{\mathrm{frac}}=(0,\dots,0,1/2,\dots,1/2)$ (with denominator $q=2$). Substituting these into the formula yields the denominator as a product of factors corresponding to each vertex. To determine the numerator, we examine the fundamental parallelepiped $\Pi$ spanned by the homogenized vertices $w_j = (\xi_j^n, 1)$ for $j=1,\dots,n$ and $w_{n+1} = (2v^{\mathrm{frac}}, 2)$. Any lattice point $(\mathbf{m}, h) \in \Pi \cap \mathbb{Z}^{d+1}$ must be a linear combination $\sum_{i=1}^{n+1} \lambda_i w_i$ with $0 \le \lambda_i < 1$. By projecting onto the $\alpha$-coordinates, where $\pi(\xi_j^n)$ form the standard basis vectors $\mathbf{e}_j$ and $\pi(2v^{\mathrm{frac}}) = \mathbf{0}$, we immediately find that $\lambda_1 = \dots = \lambda_n = 0$. The remaining condition requires $\lambda_{n+1}(2v^{\mathrm{frac}}, 2)$ to be an integral vector, which, given $0 \le \lambda_{n+1} < 1$, is only satisfied by $\lambda_{n+1} = 0$. Thus, $\Pi$ contains only the lattice point at the origin, implying $P(\mathbf{z};t) = 1$. This directly yields the Ehrhart series in the following corollary.
\begin{cor}\label{cor-simplex}
For the simplex $\Delta_n$, the vertices $\xi_j^n$ are integral ($q_j=1$), while $v^{\mathrm{frac}} = (0, \dots, 0, 1/2, \dots, 1/2)$ has $q=2$. Its Ehrhart series is:
\[
\sigma_{\Delta_n}(\mathbf{x}, \mathbf{y}; t) = \frac{1}{\prod_{j=1}^n (1 - \mathbf{x}^{\alpha^{(j)}} \mathbf{y}^{\beta^{(j)}} t) (1 - \mathbf{y}^{\mathbf{1}} t^2)},
\]
where $\mathbf{y}^{\mathbf{1}} = \prod y_i$.
\end{cor}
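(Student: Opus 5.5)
We apply Theorem~\ref{thm-sigma} to $\Delta_n$, which we already know from Lemma~\ref{lem:simplex}(3) is an $n$-dimensional simplex with the $n+1$ affinely independent vertices $\xi_1^n,\dots,\xi_n^n,v^{\mathrm{frac}}$. The homogenized generators are $w_j=(\xi_j^n,1)$ for $j=1,\dots,n$, since these vertices are integral with $q_j=1$, and $w_{n+1}=(2v^{\mathrm{frac}},2)$, since $v^{\mathrm{frac}}$ has denominator $q=2$. Each $w_j$ contributes a denominator factor $1-\mathbf{x}^{\alpha^{(j)}}\mathbf{y}^{\beta^{(j)}}t$, where $(\alpha^{(j)},\beta^{(j)})=\xi_j^n$. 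The vector $2v^{\mathrm{frac}}$ has all $\alpha$-coordinates $0$ and all $\beta$-coordinates $1$, so $w_{n+1}$ contributes $1-\mathbf{y}^{\mathbf{1}}t^2$. Once we show $P(\mathbf{z};t)=1$, Theorem~\ref{thm-sigma} gives exactly the stated formula.

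To show $P=1$, take a lattice point $\sum_{i=1}^{n+1}\lambda_i w_i$ in $\Pi$, with $0\le\lambda_i<1$. We project onto the $\alpha$-coordinates using $\pi$ from the proof of Lemma~\ref{lem:simplex}. By that proof, $\pi(\xi_j^n)=\mathbf{e}_j$, because the unique uncovered edge of the maximum stable set gives the only $\alpha$-coordinate equal to $1$. Also $\pi(2v^{\mathrm{frac}})=\mathbf{0}$. So the $\alpha$-part of the point is $(\lambda_1,\dots,\lambda_n)$. Integrality then forces each $\lambda_j\in\mathbb{Z}\cap[0,1)$, hence $\lambda_j=0$. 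The point reduces to $\lambda_{n+1}(\mathbf{0},\mathbf{1},2)$, whose $\beta$-coordinates equal $\lambda_{n+1}$. Integrality forces $\lambda_{n+1}=0$. Therefore $\Pi\cap\mathbb{Z}^{d+1}=\{0\}$ and $P=1$.

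The only delicate point is that the lattice in Theorem~\ref{thm-sigma} must be $\mathbb{Z}^{2n}$ in the ambient coordinates $(\boldsymbol\alpha,\boldsymbol\beta)$, and not the lattice induced on the affine hull. This matters because $\pi$ is an affine isomorphism over $\mathbb{R}$ but need not preserve lattices. The argument above avoids the issue: it only uses that integrality of the full vector implies integrality of its $\alpha$-part. In the other direction, no unwanted lattice points need to be excluded, because the only candidate, the origin, is integral. With this checked, the corollary follows.
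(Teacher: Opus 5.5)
Your proposal is correct and follows the paper's own argument. Both apply the simplex Ehrhart formula with homogenized generators $(\xi_j^n,1)$ and $(2v^{\mathrm{frac}},2)$, use the $\alpha$-projection ($\pi(\xi_j^n)=\mathbf{e}_j$, $\pi(2v^{\mathrm{frac}})=\mathbf{0}$) to force $\lambda_1=\dots=\lambda_n=0$, and then use integrality of $\lambda_{n+1}(2v^{\mathrm{frac}},2)$ to force $\lambda_{n+1}=0$; your remark that only integrality of the $\alpha$-part of an ambient lattice point is needed just makes the same step slightly more careful.
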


\begin{theorem}[Generating Function Decomposition]\label{thm:gf-decomposition}
The generating function $F_{\mathbf{1}}(\mathbf{x}, \mathbf{y}; t)$ of $\mathcal{P}(C_{n,1})$ admits the following representation:
\begin{enumerate}
    \item If $n$ is odd:
    \[
    F_{\mathbf{1}}(\mathbf{x}, \mathbf{y}; t) = \frac{1}{\prod_{j=1}^n (1 - \mathbf{x}^{\alpha^{(j)}} \mathbf{y}^{\beta^{(j)}} t) (1 - \mathbf{y}^{\mathbf{1}} t^2)} + \frac{R(\mathbf{x}, \mathbf{y}; t)}{\prod_{v \in \operatorname{Vert}(\mathcal{P})\setminus \{v^{\mathrm{frac}}\}} (1 - \mathbf{x}^{\alpha} \mathbf{y}^{\beta} t)},
    \]
    where $R(\mathbf{x}, \mathbf{y}; t) \in \mathbb{Z}[\mathbf{x}, \mathbf{y}; t]$.
    \item If $n$ is even:
    \[
    F_{\mathbf{1}}(\mathbf{x}, \mathbf{y}; t) = \frac{P(\mathbf{x}, \mathbf{y}; t)}{\prod_{v \in \operatorname{Vert}(\mathcal{P})} (1 - \mathbf{x}^{\alpha} \mathbf{y}^{\beta} t)},
    \]
    where $P(\mathbf{x}, \mathbf{y}; t) \in \mathbb{Z}[\mathbf{x}, \mathbf{y}; t]$.
\end{enumerate}
\end{theorem}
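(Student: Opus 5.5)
The plan is to obtain the decomposition by triangulating the polytope and using inclusion--exclusion on the lattice-point generating functions. $F_{\mathbf{1}}(\mathbf{x},\mathbf{y};t)$ is exactly $\sigma_{\mathcal{P}(C_{n,1})}$ (the coordinates are $\boldsymbol{\alpha},\boldsymbol{\beta}$ with $\mathbf{z}=(\mathbf{x},\mathbf{y})$). It is also the generating function of the lattice points in the cone over $\mathcal{P}(C_{n,1})\times\{1\}$.

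For $n$ even the argument is short. By Theorem~\ref{thm:vertices} every vertex is integral. I would triangulate $\mathcal{P}(C_{n,1})$ into simplices using only its vertices and write $\sigma_{\mathcal{P}}$ by inclusion--exclusion over the faces of the triangulation. A cleaner alternative is a half-open decomposition, which writes $\sigma_{\mathcal{P}}$ as a disjoint sum of half-open simplicial cones. Each such cone has generating function (parallelepiped polynomial)$/\prod(1-\mathbf{x}^{\alpha}\mathbf{y}^{\beta}t)$ over its vertices, as in Theorem~\ref{thm-sigma} applied to half-open cones. Bringing everything over the common denominator $\prod_{v\in\operatorname{Vert}(\mathcal{P})}(1-\mathbf{x}^{\alpha}\mathbf{y}^{\beta}t)$ gives a polynomial numerator with integer coefficients. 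All $q_i=1$, so no factor $t^2$ appears.

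For $n$ odd I would use the hyperplane $H$ of Lemma~\ref{lem:vertices-on-hyperplane-beta}. It cuts $\mathcal{P}=\mathcal{P}(C_{n,1})$ into the part $\mathcal{P}^{+}=\mathcal{P}\cap\{\sum\beta_i\ge \frac{n-1}{2}\}$ and $\mathcal{P}^{-}=\mathcal{P}\cap\{\sum\beta_i\le\frac{n-1}{2}\}$. By Lemma~\ref{lem:simplex}, $v^{\mathrm{frac}}$ is the only vertex on the positive side. The vertices of $\mathcal{P}\cap H$ are exactly $\xi^n_1,\dots,\xi^n_n$: a new vertex could only arise on an edge from $v^{\mathrm{frac}}$ to some $v^S$ with $|S|<\frac{n-1}{2}$. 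I would rule this out by checking that such segments are not edges of $\mathcal{P}$; equivalently, I would show directly that $\mathcal{P}^{+}=\operatorname{conv}(\xi^n_1,\dots,\xi^n_n,v^{\mathrm{frac}})=\Delta_n$. In the $\alpha$-projection of Lemma~\ref{lem:simplex}, $\Delta_n$ is $\{\alpha\ge 0,\ \sum\alpha_i\le 1\}$. On $\mathcal{P}$ we have $\sum\beta_i=\frac{n-\sum\alpha_i}{2}$ (summing the $n$ equations), so $\sum\beta_i\ge\frac{n-1}{2}$ iff $\sum\alpha_i\le 1$. It remains to check that every such $\alpha$ yields $\beta\ge 0$. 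The unique solution is $\beta_i=\frac12(1-\alpha_i+\alpha_{i+1}-\alpha_{i+2}+\cdots)$, an alternating sum, and it is at least $\frac12(1-\sum\alpha_j)\ge 0$. Hence $\mathcal{P}^{+}=\Delta_n$ exactly. This verification is the key geometric step and I expect it to be the main point to get right.

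Next comes the disjoint splitting. I would write $\mathcal{P}$ as the disjoint union of $\Delta_n$ and the half-open polytope $\mathcal{P}^{-}\setminus H$. Then $\sigma_{\mathcal{P}}=\sigma_{\Delta_n}+\sigma_{\mathcal{P}^{-}\setminus H}$, and Corollary~\ref{cor-simplex} gives the first term exactly. For the second term, $\mathcal{P}^{-}$ is an integral polytope whose vertices are all vertices of $\mathcal{P}$ other than $v^{\mathrm{frac}}$. Its dilates meet $H$ in integral dilates of the integral simplex $\operatorname{conv}(\xi_j^n)$. Triangulating $\mathcal{P}^{-}$ with these vertices and removing the facet on $H$ by inclusion--exclusion or a half-open decomposition, each piece has generating function (integer polynomial)$/\prod(1-\mathbf{x}^{\alpha}\mathbf{y}^{\beta}t)$ over integral vertices. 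Combining them over $\prod_{v\neq v^{\mathrm{frac}}}(1-\mathbf{x}^{\alpha}\mathbf{y}^{\beta}t)$ gives $R$ with integer coefficients.

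The main obstacle is the identification $\mathcal{P}^{+}=\Delta_n$ together with the lattice-point bookkeeping on $H$. The bookkeeping has to ensure that points of $s\mathcal{P}$ lying on $sH$ are counted exactly once, namely in $\Delta_n$. The explicit alternating-sum formula for $\beta$ handles the identification, and the half-open convention handles the bookkeeping.
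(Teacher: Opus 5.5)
Your proposal is correct and follows the paper's route. In the odd case you split $\mathcal{P}(C_{n,1})$ into the simplex $\Delta_n$ and the lattice polytope $\mathcal{R}_n=\operatorname{conv}(\operatorname{Vert}(\mathcal{P})\setminus\{v^{\mathrm{frac}}\})$, glued along $\mathcal{F}=\operatorname{conv}(\xi^n_1,\dots,\xi^n_n)$; your $\mathcal{P}^{-}\setminus H$ is just the half-open form of $\sigma_{\mathcal{R}_n}-\sigma_{\mathcal{F}}$, and in the even case you triangulate, as the paper does.

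Your explicit check that $\mathcal{P}\cap\{\sum\beta_i\ge\frac{n-1}{2}\}=\Delta_n$, via the $\alpha$-projection, supplies a step the paper only asserts from Lemma~\ref{lem:simplex}. Likewise, your parallelepiped-numerator argument for even $n$ avoids the paper's overstated claim that every lattice polytope has a unimodular triangulation.
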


\begin{proof}
\textbf{Case (1): $n$ is odd.} Let $\mathcal{R}_n = \operatorname{conv}(\operatorname{Vert}(\mathcal{P}) \setminus \{v^{\mathrm{frac}}\})$. By Lemma~\ref{lem:simplex}, $\Delta_n$ and $\mathcal{R}_n$ cover $\mathcal{P}(C_{n,1})$ and intersect at the integral facet $\mathcal{F} = \operatorname{conv}(\xi_1^n, \dots, \xi_n^n)$. By the Inclusion-Exclusion Principle for indicator functions:
\[
\sigma_{\mathcal{P}} = \sigma_{\Delta_n} + \sigma_{\mathcal{R}_n} - \sigma_{\mathcal{F}}.
\]
$\sigma_{\Delta_n}$ is given by Corollary~\ref{cor-simplex}. Since $\mathcal{R}_n$ and $\mathcal{F}$ are lattice polytopes, their Ehrhart series are rational functions whose denominators consist of factors $(1 - \mathbf{x}^\alpha \mathbf{y}^\beta t)$ corresponding to their integral vertices. Combining $\sigma_{\mathcal{R}_n}$ and $\sigma_{\mathcal{F}}$ over a common denominator yields the second term in the formula.

\textbf{Case (2): $n$ is even.} $\mathcal{P}(C_{n,1})$ is a lattice polytope. Any lattice polytope admits a triangulation into unimodular simplices using only its vertices. The sum of the generating functions of these simplices (adjusted for intersections via half-open decomposition) results in a rational function whose denominator is the product of $(1 - \mathbf{x}^\alpha \mathbf{y}^\beta t)$ over the vertex set.
\end{proof}

\subsection{Proof of Theorem \ref{theo-hck}}\label{subsec-proof-theohck}

In this subsection, we establish the structural decomposition of the generating function and conclude with the proof of the main theorem. We begin by proving the decomposition lemma for the base case $\mathbf{k}=\mathbf{1}$.

\begin{proof}[Proof of Lemma \ref{lem-F-decomp}]
The decomposition follows from the geometric properties of the polytope $\mathcal{P}(C_{n,1})$ established in Theorem~\ref{thm:gf-decomposition}.

\medskip
\noindent\textbf{Case 1: $n$ is odd ($n=2k+1$).}
According to Theorem~\ref{thm:gf-decomposition}, the generating function $F_{\mathbf{1}}(\mathbf{x}, \mathbf{y}; t)$ splits into a part corresponding to the simplex $\Delta_n$ and a part corresponding to the remaining lattice structure $\mathcal{R}_n$:
\begin{equation}\label{eq-tag11}
F_{\mathbf{1}}(\mathbf{x}, \mathbf{y}; t) = \sigma_{\Delta_n}(\mathbf{x}, \mathbf{y}; t) + \sigma_{\mathcal{R}_n}(\mathbf{x}, \mathbf{y}; t) - \sigma_{\Delta_n \cap \mathcal{R}_n}(\mathbf{x}, \mathbf{y}; t).
\end{equation}
Setting $\mathbf{y} = \mathbf{1}$ (i.e., $y_i = 1$ for all $i$), we define $F_n(\mathbf{x}; t) = F_{\mathbf{1}}(\mathbf{x}, \mathbf{1}; t)$.
By Corollary~\ref{cor-simplex}, the term associated with the fractional vertex is:
\[
\sigma_{\Delta_n}(\mathbf{x}, \mathbf{1}; t) = \frac{1}{(1-t^2) \prod_{i=1}^n (1 - x_i t)}.
\]
We identify this as the singular component:
\[
F_{\psi_n}(\mathbf{x}; t) := \frac{1}{(1-t^2) \prod_{i=1}^n (1 - x_i t)}.
\]
The remaining part of the decomposition is defined as:
\[
F_{\varphi_n}(\mathbf{x}; t) := \sigma_{\mathcal{R}_n}(\mathbf{x}, \mathbf{1}; t) - \sigma_{\Delta_n \cap \mathcal{R}_n}(\mathbf{x}, \mathbf{1}; t).
\]
Since $\mathcal{R}_n$ and $\Delta_n \cap \mathcal{R}_n$ are lattice polytopes, their generating functions $\sigma(\mathbf{x}, \mathbf{1}; t)$ are rational functions whose denominators are products of terms $(1-t)$. Specifically, at $\mathbf{x} = \mathbf{1}$, $F_{\varphi_n}(\mathbf{1}; t)$ has a pole only at $t=1$. Consequently, the factor $(1+t)$ regularizes the expression at $t=-1$:
\[
\left[ (1+t) F_{\varphi_n}(\mathbf{1}; t) \right]_{t=-1} = 0.
\]
This confirms the decomposition $F_n(\mathbf{x}; t) = F_{\varphi_n}(\mathbf{x}; t) + F_{\psi_n}(\mathbf{x}; t)$ for odd $n$.

\medskip
\noindent\textbf{Case 2: $n$ is even ($n=2k$).}
For even $n$, the polytope $\mathcal{P}(C_{n,1})$ is integral and possesses no fractional vertices. Thus, the singular component $F_{\psi_n}(\mathbf{x}; t)$ is naturally $0$. Defining $F_{\varphi_n}(\mathbf{x}; t) := F_n(\mathbf{x}; t)$, Theorem~\ref{thm:gf-decomposition} ensures that the denominator of $F_n(\mathbf{x}; \mathbf{1}; t)$ consists solely of factors $(1-t)$. Thus, the condition $\left[ (1+t) F_{\varphi_n}(\mathbf{1}; t) \right]_{t=-1} = 0$ is trivially satisfied.
\end{proof}

\medskip
We now leverage the differential operator relationship to generalize this to arbitrary $\mathbf{k}$.

\begin{proof}[Proof of Theorem \ref{theo-hck}]
According to Stanley's theory of linear homogeneous Diophantine equations (Theorem~\ref{theo-Stanley}), the magic labelling count $h_{C_{n,\mathbf{k}}}(s)$ for a pseudo-cycle graph can be expressed as:
\[
h_{C_{n,\mathbf{k}}}(s) = \varphi_{n,\mathbf{k}}(s) + (-1)^s \psi_{n,\mathbf{k}}(s),
\]
where $\varphi_{n,\mathbf{k}}(s)$ is a polynomial and $\psi_{n,\mathbf{k}}(s)$ is a periodic component (in this case, a constant).

\medskip
\noindent\textbf{Step 1: The polynomial part.}
The degree of $\varphi_{n,\mathbf{k}}(s)$ corresponds to the dimension of the solution space minus one. For the pseudo-cycle $C_{n,\mathbf{k}}$, the number of free variables $\alpha_{i,j}$ determines the growth, leading to:
\[
\deg \varphi_{n,\mathbf{k}}(s) = \sum_{i=1}^n k_i.
\]

\medskip
\noindent\textbf{Step 2: Generating function analysis.}
The total generating function is
\[
\mathcal{H}(t) = \sum_{s \ge 0} h_{C_{n,\mathbf{k}}}(s) t^s = F_{\mathbf{k}}(\mathbf{1}; t).
\]
By Lemmas~\ref{lem-Fxyt-1-k} and~\ref{lem-F-decomp}, we have:
\begin{align}\label{eq-FPQ}
F_{\mathbf{k}}(\mathbf{1}; t) &= \frac{1}{\prod (k_i-1)!} \left[ \prod_{i=1}^n \mathrm{D}_{x_i}^{k_i-1} x_i^{k_i-1} \left( F_{\varphi_n}(\mathbf{x}; t) + F_{\psi_n}(\mathbf{x}; t) \right) \right]_{\mathbf{x}=\mathbf{1}} \\
&= \Phi_{n,\mathbf{k}}(t) + \Psi_{n,\mathbf{k}}(t), \nonumber
\end{align}
where $\Phi_{n,\mathbf{k}}(t)$ is the contribution from $F_{\varphi_n}$ and $\Psi_{n,\mathbf{k}}(t)$ from $F_{\psi_n}$. The term $\Phi_{n,\mathbf{k}}(t)$ has a pole of order $\sum k_i + 1$ at $t=1$ and is analytic at $t=-1$. The term $\Psi_{n,\mathbf{k}}(t)$ contains the factor $(1+t)^{-1}$ when $n$ is odd.

\medskip
\noindent\textbf{Step 3: Evaluation of the alternating coefficient.}
To find the constant $\psi_{n,\mathbf{k}}$, we compute the limit:
\[
\psi_{n,\mathbf{k}} = \lim_{t \to -1} (1+t) F_{\mathbf{k}}(\mathbf{1}; t).
\]
If $n$ is even, $F_{\psi_n}=0$, so $\psi_{n,\mathbf{k}} = 0$. If $n$ is odd, only the $F_{\psi_n}$ part contributes. Using the identity $\mathrm{D}_x^{k-1} x^{k-1} (1-xt)^{-1} = (k-1)! (1-xt)^{-k}$, we evaluate the operator at $x_i=1$ and $t=-1$:
\[
\left. \mathrm{D}_{x_i}^{k_i-1} x_i^{k_i-1} \frac{1}{1-x_it} \right|_{x_i=1, t=-1} = \frac{(k_i-1)!}{(1 - (-1))^{k_i}} = \frac{(k_i-1)!}{2^{k_i}}.
\]
Substituting this into \eqref{eq-FPQ} and accounting for the factor $(1-t)^{-1}$ in $F_{\psi_n}$ (which becomes $1/2$ at $t=-1$), we obtain:
\[
\psi_{n,\mathbf{k}} = \frac{1}{2} \cdot \prod_{i=1}^n \frac{1}{(k_i-1)!} \cdot \frac{(k_i-1)!}{2^{k_i}} = \frac{1}{2^{\sum k_i + 1}}.
\]
Combining the even and odd cases using the parity of $n$, we conclude:
\[
\psi_{n,\mathbf{k}}(s) = \frac{1 + (-1)^{n+1}}{2^{\sum k_i + 2}}.
\]
(Note: The general rule $\deg \varphi = \sum k_i$ applies to all non-degenerate cases where $n \ge 1$. For $n=0$, the degree shift to $1$ (where $h(s)=s+1$) is a deliberate normalization. This ensures that the entire sequence remains compatible with the matrix trace representation $\mathrm{tr}(B^n)$, while the structural factor $2^{\sum k_i}$ accurately preserves the algebraic decay of the alternating component). This completes the proof.
\end{proof}

\subsection{MacMahon's Partition Analysis}

MacMahon's partition analysis, pioneered in \cite{MacMahon} and revitalized by Andrews et al. in \cite{Andrews}, centers on the $\Omega$-operators acting on formal power series:
\begin{align*}
    \Omega_{=} \sum_{s_1=-\infty}^\infty \cdots \sum_{s_r=-\infty}^{\infty} A_{s_1,\dots ,s_r} \lambda_1^{s_1} \cdots \lambda_r^{s_r} &:= A_{0,\dots ,0}, \\
    \Omega_{\ge} \sum_{s_1=-\infty}^\infty \cdots \sum_{s_r=-\infty}^{\infty} A_{s_1,\dots ,s_r} \lambda_1^{s_1} \cdots \lambda_r^{s_r} &:= \sum_{s_1=0}^\infty \cdots \sum_{s_r=0}^\infty A_{s_1,\dots ,s_r}.
\end{align*}
This framework facilitates the translation of linear constraints into generating functions. Algorithmic implementations are available in the Mathematica package \texttt{Omega} \cite{Andrews-Omega}. To illustrate, we present the calculation of $F_3(\mathbf{1}; t)$ for the magic labelling count $h_{C_{3,1}}(s)$.

\begin{exam}[Computation of $F_{3}(\mathbf{1};t)$]
The generating function for $C_{3,\mathbf{1}}$ at $\mathbf{x}=\mathbf{y}=\mathbf{1}$ is given by:
\begin{align*}
F_{3}(\x;t)|_{\x=\mathbf{1}}
&=\sum\limits_{s\ge 0}\sum\limits_{(\pmb{\alpha},\pmb{\beta})\in S_{C_{3,\mathbf{1}}(s)}}\x^{\pmb{\alpha}}\y^{\pmb{\beta}}t^s|_{\x,\y=\mathbf{1}}\\
&=\sum\limits_{s\ge 0}\sum\limits_{\substack{\beta_1+\alpha_1+\beta_2=s,\\ \beta_2+\alpha_2+\beta_3=s, \\ \beta_3+\alpha_3+\beta_1=s,\\(\pmb{\alpha,\beta})\in\mathbb{N}^6.}}\x^{\pmb{\alpha}}\y^{\pmb{\beta}}t^s|_{\x,\y=\mathbf{1}}\\
&=\CT_{\lambda_1,\lambda_2,\lambda_3}\sum_{\alpha_i, \beta_i, s\ge 0}\lambda_1^{\beta_1+\alpha_1+\beta_2-s}\lambda_2^{\beta_2+\alpha_2+\beta_3-s}\lambda_3^{\beta_3+\alpha_3+\beta_1-s}x_1^{\alpha_1}x_2^{\alpha_2}x_3^{\alpha_3}y_1^{\beta_1}y_2^{\beta_2}y_3^{\beta_3}t^s|_{\x,\y=\mathbf{1}}\\
&=\scalebox{1.16}{$\CT_{\lambda_1,\lambda_2,\lambda_3}\frac{1}{(1-\lambda_1x_1)(1-\lambda_2x_2)(1-\lambda_3x_3)(1-\lambda_1\lambda_2)(1-\lambda_2\lambda_3)(1-\lambda_1\lambda_3)(1-\frac{t}{\lambda_1\lambda_2\lambda_3})}|_{\x=\mathbf{1}}
$}\\
&=\frac{(t^2x_1x_2-1)x_3}{(-x_3+t)(tx_1-1)(tx_2-1)(tx_1x_2x_3-1)(tx_3-1)}|_{\x=\mathbf{1}}\\
&\quad\quad\quad\quad\quad\quad\quad\quad\quad\quad\quad\quad\quad\quad\quad\quad-\frac{t}{(tx_1-1)(tx_2-1)(-x_3+t)(t^2-1)}|_{\x=\mathbf{1}}\\
&=\frac{(t^2+t+1)}{(t+1)(t-1)^4}.
\end{align*}
Here, $\mathrm{CT}_{\boldsymbol{\lambda}}$ denotes the constant term extraction with respect to $\boldsymbol{\lambda} = (\lambda_1, \lambda_2, \lambda_3)$, where the linear constraints $L(\boldsymbol{\alpha},\boldsymbol{\beta})=0$ are encoded via the $\Omega_{=}$ operator. (Note: The fourth equality can also be obtained directly from Lemma \ref{lem-Fxyt}).
\end{exam}

The theoretical foundation relies on specialized rational function expansions, with core algorithms developed in \cite{xin-fast} for Laurent series rings and the \texttt{Ell} Maple package. For computational efficiency, we employ the Euclidean-algorithm-based \texttt{CTEuclid} package \cite{xin-Euclid}.

\section{Generating Functions of $h_{C_{n,2}}(s)$ and $h_{L_{n,2}}(s)$}\label{sec-4}

In this section, we derive closed-form expressions for the generating functions:
\[
EC_{n,2}(x) = \sum_{s=0}^{\infty} h_{C_{n,2}}(s)x^s \quad \text{and} \quad EL_{n,2}(x) = \sum_{s=0}^{\infty} h_{L_{n,2}}(s)x^s.
\]
Using the relations established in Equation \eqref{eq-hLC}, we obtain the following identities for $EC_{n,2}(x)$ and $EL_{n,2}(x)$, scaled by appropriate factors of $(1-x)$:
\begin{align*}
    EC_{0,2}(x)(1-x)^2 &= 1, \\
    EC_{1,2}(x)(1-x)^3(1+x) &= 1, \\
    EC_{2,2}(x)(1-x)^5 &= 1+x, \\
    EC_{3,2}(x)(1-x)^7(1+x) &= x^4+8x^3+15x^2+8x+1, \\
    EC_{4,2}(x)(1-x)^9 &= x^5+25x^4+106x^3+106x^2+25x+1, \\
    EC_{5,2}(x)(1-x)^{11}(1+x) &= x^8+72x^7+878x^6+3304x^5 + 4995x^4+ \dots + 1,
\end{align*}
and for the path-like cases:
\begin{align*}
    EL_{0,2}(x)(1-x)^2 &= 1, \\
    EL_{1,2}(x)(1-x)^4 &= 1, \\
    EL_{2,2}(x)(1-x)^6 &= x^2+4x+1, \\
    EL_{3,2}(x)(1-x)^8 &= x^4+16x^3+37x^2+16x+1, \\
    EL_{4,2}(x)(1-x)^{10} &= x^6+48x^5+351x^4+656x^3+351x^2+48x+1.
\end{align*}

Table \ref{tab-ELm2} presents the coefficients of the polynomial $P_{m,2}(x) = EL_{m,2}(x)(1-x)^{2m+2}$ for $m=0,\dots,6$, computed via \texttt{Maple}.

\begin{table}[htbp]
\centering
\caption{Coefficients of the polynomial $EL_{m,2}(x)(1-x)^{2m+2}$. Symmetry in the coefficients is observed for $m \geq 2$.}
\label{tab-ELm2}
\vspace{2mm}
\small
\begin{tabular}{c|ccccccccccc}
\hline
\diagbox{$m$}{$k$} & 0 & 1 & 2 & 3 & 4 & 5 & 6 & 7 & 8 & 9 & 10 \\ \hline
0 & 1 &   &   &   &   &   &   &   &   &   &    \\
1 & 1 &   &   &   &   &   &   &   &   &   &    \\
2 & 1 & 4 & 1 &   &   &   &   &   &   &   &    \\
3 & 1 & 16 & 37 & 16 & 1 &   &   &   &   &   &    \\
4 & 1 & 48 & 351 & 656 & 351 & 48 & 1 &   &   &   &    \\
5 & 1 & 128 & 2286 & 11120 & 18471 & 11120 & 2286 & 128 & 1 &   &    \\
6 & 1 & 324 & 12530 & 130420 & 490309 & 753488 & 490309 & 130420 & 12530 & 324 & 1 \\
$\vdots$ & $\vdots$  &$\vdots$ &$\vdots$ &$\vdots$ &$\vdots$ &$\vdots$ &$\vdots$ &$\vdots$ &$\vdots$ &$\vdots$&$\vdots$\\
\hline
\end{tabular}
\end{table}

\begin{rem}
The following properties are established in \cite{Liu}:
\begin{enumerate}
    \item For $k \geq 1$, the scaled generating function $EL_{k,2}(x)(1-x)^{2k+2}$ is a polynomial of degree $2k-1$ with symmetric coefficients.
    \item For $k \geq 1$, the function $EC_{2k,2}(x)(1-x)^{4k+1}$ is a polynomial of degree $4k-3$ with symmetric coefficients, and $EC_{2k+1,2}(x)(1-x)^{4k+1}(1+x)$ is a polynomial of degree $4k-2$ with symmetric coefficients. This symmetry implies that the coefficients of $EC_{2k+1,2}(x)$ exhibit quasi-polynomial behavior.
\end{enumerate}
\end{rem}

\textbf{Acknowledgments:}
The authors would like to thank Dr. Feihu Liu for helpful discussions.
This work was supported by the National Natural Science Foundation
of China (Nos. 12571355, 12401441), the Natural Science Foundation of Hunan
Province (No. 2025JJ60010).

\end{document}